\numberwithin{equation}{section}
\theoremstyle{plain}
\newtheorem{lemma}{Lemma}[section]
\newtheorem{theorem}[lemma]{Theorem}
\newtheorem{corollary}[lemma]{Corollary}
\newtheorem{proposition}[lemma]{Proposition}
\newtheorem{definition}[lemma]{Definition}
\newtheorem{example}{Example}
\theoremstyle{remark}
\newtheorem{remark}{Remark}
\newcommand*  {\N} {{\mathbb N}}
\newcommand{\Nb}{\mathbb{N}}
\newcommand{\Eb}{\mathbb{E}}
\newcommand{\Pb}{\mathbb{P}}
\newcommand{\Fb}{\mathbb{F}}
\newcommand{\Fc}{\mathcal{F}}
\newcommand{\Fct}{\left( \mathcal{F}_t \right)_{t \geq 0}}
\newcommand{\Uc}{\mathscr{U}}
\newcommand{\Xc}{\mathcal{X}}
\newcommand{\Sc}{\mathcal{S}}
\newcommand{\lnorm}{\left\|}
\newcommand{\rnorm}{\right\|}
\newcommand{\hook}{\hookrightarrow}	
\newcommand*{\rom}[1]{\expandafter\@slowromancap\romannumeral #1@}
\def\p{\partial}
\begin{document}

\vskip 0.125in

\title[Stochastic Hydrostatic Euler equations]
{Pathwise Solutions for Stochastic Hydrostatic Euler Equations under the Local Rayleigh Condition} 

\date{\today}

\author[R. Hu]{Ruimeng Hu}
\address[R. Hu]
{Department of Mathematics \\
Department of Statistics and Applied Probability \\
     University of California  \\
	Santa Barbara, CA 93106, USA.} \email{rhu@ucsb.edu}

\author[Q. Lin]{Quyuan Lin*}
\address[Q. Lin]
{	School of Mathematical and Statistical Sciences \\
Clemson University\\
Clemson, SC 29634, USA.} \email{quyuanl@clemson.edu}
\thanks{*Corresponding author. School of Mathematical and Statistical Sciences, Clemson University, Clemson, SC 29634, USA. E-mail address: quyuanl@clemson.edu}

\begin{abstract}

The hydrostatic Euler equations are important in the study of atmospheric and oceanic dynamics in the planetary scale. While its deterministic version has been widely studied in the literature, its stochastic version is far less understood. In this paper, we consider the two-dimensional stochastic hydrostatic Euler equations with initial data that are random variables in a suitable Sobolev space satisfying the local Rayleigh condition. We establish local-in-time existence and uniqueness of maximal pathwise solutions. Our work provides the first result on existence and uniqueness in Sobolev spaces, and establishes the first existence of pathwise solutions to the stochastic hydrostatic Euler equations.

\end{abstract}

\maketitle

MSC Subject Classifications: 35Q86, 60H15, 76M35, 35Q35, 86A10\\

Keywords: Stochastic hydrostatic Euler equations, stochastic primitive equations, local Rayleigh condition, pathwise solution

%\tableofcontents

\section{Introduction} \label{section:introduction}
In this paper, we study the following two-dimensional
stochastic hydrostatic Euler equations:
\noeqref{PE-inviscid-1}
\noeqref{PE-inviscid-2}
\noeqref{PE-inviscid-3}
\noeqref{PE-inviscid-ic}
\begin{subequations}\label{PE-inviscid-system}
\begin{align}
    &d u + (u\partial_x u + w \partial_z u  + \p_x p)dt = \sigma (u) d W , \label{PE-inviscid-1}  
    \\
    &\partial_z p  =0 , \label{PE-inviscid-2}
    \\
    &u_x + w_z =0,  \label{PE-inviscid-3} 
    \\
    &u(0) = u_0. \label{PE-inviscid-ic}
\end{align}
\end{subequations}
This model is also known as the stochastic inviscid primitive equations (PEs), and it is widely used in the study of atmospheric and oceanic dynamics in the planetary scale.
We consider \eqref{PE-inviscid-system} in a periodic channel 
$$\mathbb D:=\mathbb T \times (0,1) = \{(x,z): \, x\in \mathbb R/\mathbb Z,\, 0<z<1\},
$$
with the boundary conditions
\begin{equation}\label{BC}
    u, w, p \text{ are periodic in } x \text{ with period }  1, \quad\text{and}\quad w|_{z=0,1}=0.
\end{equation}
Here $(u,w)$ are the horizontal and vertical velocity, respectively, and $p$ stands for the pressure. The term $\sigma (u) d W$ stands the external forcing driven by white noise.
The corresponding deterministic system of \eqref{PE-inviscid-system} can be formally derived as the hydrostatic limit by taking $\epsilon\rightarrow 0$ in the 2D Euler equations in an $\epsilon$-narrow periodic channel $\mathbb T \times (0,\epsilon)$ and considering the leading order behavior. This formal hydrostatic limit has been rigorously proved in \cite{grenier1999derivation,brenier2003remarks,masmoudi2012h} for the inviscid case under the local Rayleigh condition:
\begin{equation}\label{local-rayleigh}
    \p_{zz} u(t,x,z) \geq \kappa >0
\end{equation}
for some positive constant $\kappa$. See also \cite{azerad2001mathematical,li2019primitive,li2022primitive} for the rigorous derivation of viscous PEs from the Navier-Stokes equations.

{\noindent\bf Main Results.} Our main result, stated in the following theorem, concerns the local-in-time existence and uniqueness of maximal pathwise solutions (see Definition \ref{def:pathwise.sol}). We defer more explanations on the notations used in the theorem to Section \ref{section:preliminary}.
\begin{theorem}\label{thm:main-1}
    Let $\Sc = \left(\Omega, \Fc, \Fct, \Pb \right)$ be a given stochastic basis, and let $s\geq 6$ and $0<\kappa<\frac12$ be fixed. Suppose the initial data $u_0\in L^2\left(\Omega; \mathcal D_{s,2\kappa}\right)$ is $\mathcal F_0$-measurable, and assume the noise $\sigma$ satisfies some proper conditions \eqref{noise-ine}. Then there exists a unique maximal local pathwise solution $(u, \{\eta_n\}_{n\in\mathbb N},\xi)$ to the system \eqref{PE-inviscid-system} in the sense of Definition \ref{def:pathwise.sol}.
\end{theorem}

{\noindent \bf Related literature.}
In the deterministic case, the three-dimensional (3D) PEs with full viscosity were shown to be globally well-posed in Sobolev spaces \cite{cao2007global,kobelkov2006existence,kukavica2007regularity,hieber2016global}. The same result holds when the PEs have only horizontal viscosity \cite{cao2016global,cao2017strong,cao2020global}. The PEs with only vertical viscosity, also called the hydrostatic Navier-Stokes equations, were shown to be ill-posed in Sobolev spaces \cite{renardy2009ill}. In order to obtain the well-posedness, one can consider additional weak dissipation \cite{cao2020well}, or assume the initial data to be Gevrey regular and satisfying condition \eqref{local-rayleigh} \cite{gerard2020well}, or be analytic in the horizontal direction \cite{paicu2020hydrostatic,lin2022effect}. It still remains open  whether the smooth solutions exist globally or form singularity in finite time. For the inviscid PEs (the hydrostatic Euler equations), it has been shown that such a system is ill-posed in Sobolev spaces \cite{renardy2009ill,han2016ill,ibrahim2021finite}. Such ill-posedness can be overcome in the following two situations: 1) In the 2D case, the local well-posedness can be obtained by assuming the initial data satisfying the local Rayleigh condition \eqref{local-rayleigh}\cite{brenier1999homogeneous,masmoudi2012h}; 2) By assuming real analyticity in all directions for general initial data in both 2D and 3D, \cite{ghoul2022effect,kukavica2011local} established the local well-posedness in the space of analytic functions with the radius of analyticity shrinking in time. Unlike the case with horizontal viscosity where the strong solutions exist globally in time, the smooth solutions to the inviscid PEs have been shown to form singularity in finite time \cite{cao2015finite,collot2023stable,ibrahim2021finite,wong2015blowup}.

In the stochastic setting, one considers the system driven by white noise with random initial data in some proper spaces. Having white noise terms in the system and assuming initial data to be random can take into account numerical and empirical uncertainties. It can also offer predictions of not only a realistic trajectory but also the associated uncertainties. Along this line of research, the stochastic PEs with full viscosity was investigated in 2D \cite{glatt2008stochastic,glatt2011pathwise} and in 3D \cite{brzezniak2021well,debussche2011local,debussche2012global,agresti2024stochastic,agresti2025stochastic}. With only horizontal viscosity, the global existence and uniqueness of strong solutions have been established in \cite{saal2021stochastic}. These results on global well-posedness in Sobolev spaces were based on the results from the deterministic case. The inviscid model is far less investigated in the literature. In the authors' previous work \cite{hu2023local}, they established the existence of local martingale solutions (weak solution in the stochastic sense) and pathwise uniqueness of solutions provided that the initial data is analytic. In addition, with some specific noise (random damping and random diffusion), the well-posedness in Gevrey class of the hydrostatic Euler equations was shown in \cite{hu2025regularization}.

As discussed in \cite{hu2023local}, due to the difference between the nonlinear estimates in the analytic framework and in Sobolev spaces, the existence of pathwise solutions to the stochastic hydrostatic Euler equations still remains open. Moreover, there are no works concerning the existence or uniqueness of either martingale solutions or pathwise solutions in Sobolev spaces. This paper aims to fill this gap by studying the 2D system \eqref{PE-inviscid-system} in Sobolev spaces with initial data being a random variable satisfying condition \eqref{local-rayleigh} almost surely. Remark that in this paper, as well as in most related literature, the stochastic term $\sigma(u) d W$ in \eqref{PE-inviscid-1} is understood in the It\^o integral sense. If the model were understood in the Stratonovich sense, usually denoted by $\sigma(u) \circ d W$, the approach performed below may still apply, after converting it back to the It\^o formalism under suitable conditions 
\cite{duan2014effective}. This will add an additional $dt$ term to \eqref{PE-inviscid-1} which contains the Fr\'{e}chet derivative of $\sigma$ on $u$, leading to more involved assumptions and analysis. 

{\noindent\bf Main contribution.} 
1. As far as we know, there have been only studies on the existence of martingale solutions \cite{hu2023local} for the stochastic hydrostatic Euler equations subject to general multiplicative noise. However, a strong solution in the stochastic sense is preferable, since one may need pathwise information or construct solutions on a given filter probability space that has ``meanings'' when modeling a real-world process. This paper is the first work to show the existence of pathwise solutions to these models.

    2. Our previous study of the stochastic hydrostatic Euler equations \cite{hu2023local} relied on the analytic framework. In general, the analyticity of the initial data puts too much restriction, and milder requirements on the initial condition are preferred if possible. This work gives the first result on the existence and uniqueness of pathwise solutions in Sobolev spaces.
    
    3. Motivated by \cite{brenier1999homogeneous,masmoudi2012h,gerard2020well} which overcome the ill-posedness in Sobolev spaces of the deterministic hydrostatic Euler equations  \cite{renardy2009ill,ibrahim2021finite,han2016ill}, we shall solve the stochastic system \eqref{PE-inviscid-system} with the local Rayleigh condition \eqref{local-rayleigh}. To achieve this, we work with a modified version of system \eqref{PE-inviscid-system} by adding a well-chosen cut-off function, which will reconcile the original system before a proper stopping time, and preserve \eqref{local-rayleigh}. 
    We believe that the developed analytical tools can be applied to other stochastic partial differential equations (SPDEs) where the well-posedness in Sobolev spaces can be established with structured initial data, e.g., the Prandtl equations with initial data satisfying monotonicity condition (similar to the local Rayleigh condition) \cite{kukavica2014local,oleinik1966mathematical,alexandre2015well,masmoudi2015local,xin2004global}.

    4. In order to obtain the pathwise solution with desired regularity, we prove an abstract Cauchy theorem (Lemma~\ref{lemma:cauchy}). Different from this type of results developed in the past \cite{glatt2009strong,glatt2014local}, our analysis is much more involved due to  
    the presence of the local Rayleigh condition \eqref{local-rayleigh} and the use of more complicated functional spaces (see Section \ref{sec:prelim-funcspace}). Therefore, we establish our version from scratch. 
    We believe this tool will be directly useful or shed light on the study of certain SPDEs when the required functional spaces have some special structure.

For the purpose of clarity, we describe the organization and contents of the paper by a flowchart in Figure~\ref{fig:my_label}. 

\begin{figure}[htpb]
    \centering
    \includegraphics[width=\textwidth, keepaspectratio = TRUE]{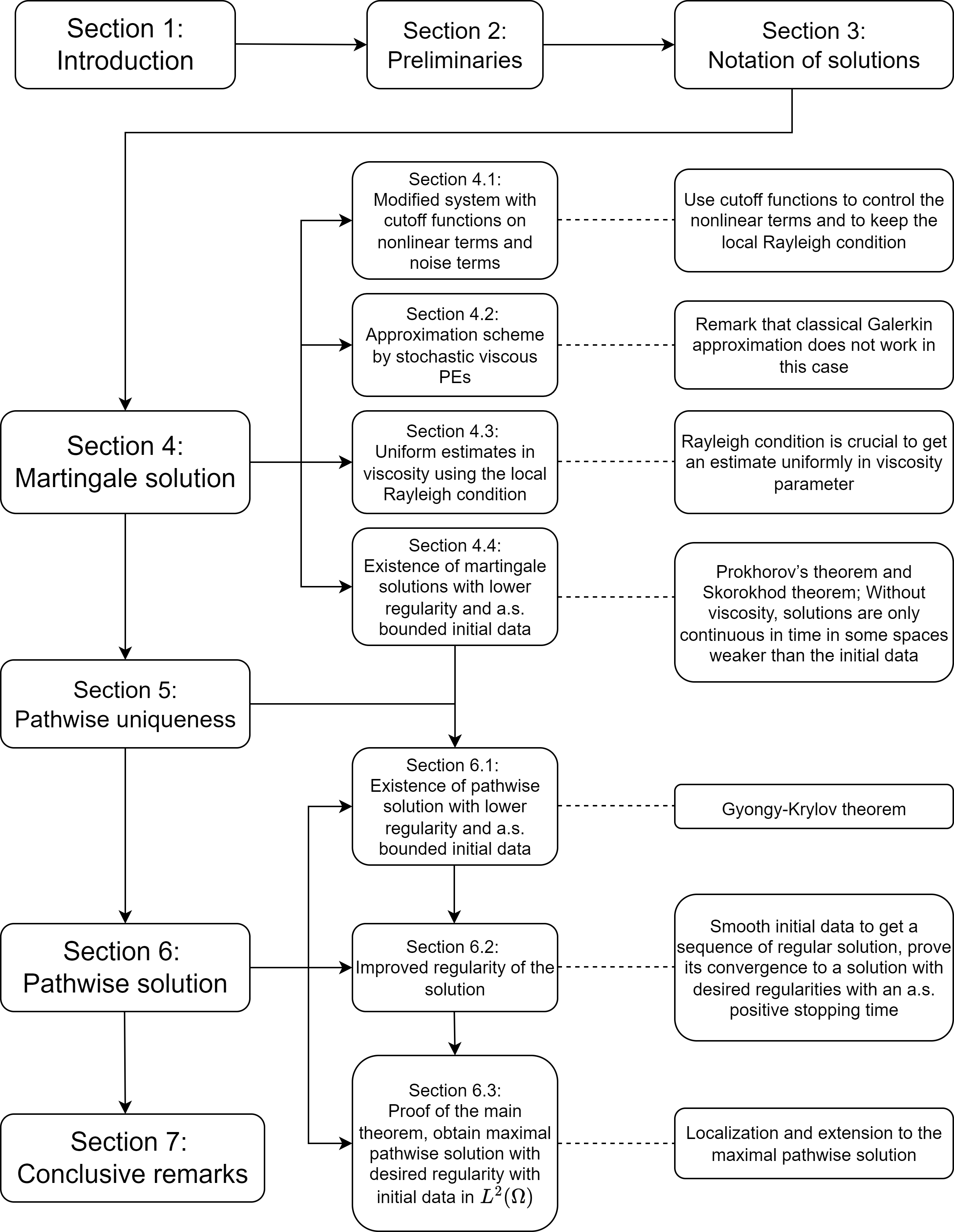}
    \caption{The flowchart for the organization of the paper.}
    \label{fig:my_label}
\end{figure}

\section{Preliminaries}\label{section:preliminary}
In this section, we introduce notations and preliminaries that will be used throughout the paper. The universal constants $c$ and $C$ appearing below may change from line to line. When necessary, we use subscripts to indicate the dependence of the constant on certain parameters, {\it e.g.}, write $C_r$ to emphasize that the constant depends on $r$.  

\subsection{Function spaces and stochastic settings.}\label{sec:prelim-funcspace} 
Let $L^p$ and $W^{s,p}$ with $s\in \mathbb N$ and $p\geq 1$ be the standard Sobolev spaces (\cite{adams2003sobolev}). We denote by $H^s = W^{s,2}$, 
$\|\varphi\|:= \|\varphi\|_{L^2(\mathbb D)}$ and $\|\varphi\|_{H^s} = \|\varphi\|_{H^s(\mathbb D)}$ for simplicity, where the $H^s$ norm is defined by
\begin{equation*}
\begin{split}
    &\|\varphi \|_{H^s}= \sqrt{ \sum\limits_{|\alpha|\leq s} \|D^\alpha \varphi\|^2}.
\end{split}
\end{equation*} 
For $0<\kappa<1$, denote a subspace of $H^s$ as 
\begin{equation*}
    H^s_{\kappa} :=\left\{ \varphi \in H^s: \kappa\leq \p_z\varphi \leq \frac1\kappa \; \text{for all } (x,z)\in \mathbb D \right\},
\end{equation*}
and denote the corresponding norm as
\begin{equation}\label{equiv-hs-norm}
    \|\varphi\|_{\widetilde H^s} := \sqrt{ \sum\limits_{|\alpha|\leq s, D^\alpha \neq \partial_x^s} \|D^\alpha \varphi\|^2 + \lnorm \frac{\p_x^s \varphi}{\sqrt{\p_z\varphi}}\rnorm^2}.
\end{equation}
Notice that when $\varphi \in H^s_{\kappa}$, one has the following equivalence in norms:
\begin{equation}\label{equiv-beta}
    c_{\kappa}\|\varphi\|_{H^s}\leq\frac{c}{1+\lnorm\sqrt{\p_z\varphi}\rnorm_{L^\infty}}\|\varphi\|_{H^s} \leq \|\varphi\|_{\widetilde H^s} \leq C\left(1+\lnorm\frac1{\sqrt{\p_z\varphi}}\rnorm_{L^\infty}\right)\|\varphi\|_{H^s}\leq C_{\kappa}\|\varphi\|_{H^s}.
\end{equation}
Due to the boundary condition \eqref{BC}, one has
\begin{equation}\label{w}
    \int_0^1 \p_x u dz = 0, \quad w(x,z) = -\int_0^z \p_x u(x,\tilde{z})d\tilde{z},
\end{equation}
and we shall consider $u\in H$ where
\begin{equation}\label{space:H}
    H:=\left\{\varphi\in L^2(\mathbb D): \int_0^1 \p_x \varphi dz = 0 \right\}.
\end{equation}

 As shown in \cite{masmoudi2012h}, the vorticity $\p_z u$ plays an important role in the study of system \eqref{PE-inviscid-system}. For this reason, we define
\begin{equation}\label{def:v}
    v= \p_z u,
\end{equation}
for convenience. By taking $\p_z$ to \eqref{PE-inviscid-system}, we get 
\begin{align*}
    &d v + (u\partial_x v + w\partial_z v  )dt = \partial_z \sigma (u) dW , 
    \\
    &v(0) = \partial_z u_0. 
\end{align*}
Given the relation \eqref{w}, we have the following Poincar\'e inequalities:
\begin{equation}\label{poincare}
    \|\p_x^{k+1} u\|\leq C \|\p_x^{k+1} v\|,\qquad \|\p_x^{k} w\|\leq C \|\p_x^{k+1} u\| \leq C \|\p_x^{k+1} v\|, \qquad k=0,1,2,\cdots.
\end{equation}
Moreover, for $|\alpha|=k$ with $0<k\leq s$, and $D^\alpha\neq (k,0)$, i.e., not all the derivatives hit on $x$, we have the following inequalities:
\begin{equation}\label{ine:controls}
   \|D^\alpha u\|\leq \|D^\alpha v\|\leq \|v\|_{H^k}, \quad \|D^\alpha w\|\leq \|v\|_{H^k},     \quad \|u\|_{H^s} \leq \|u\| + \|v\|_{H^s}.
\end{equation}

Let $\mathcal D_s$ and $\mathcal D_{s, \kappa}$ be the Hilbert spaces defined by
\begin{equation}\label{domain:D}
    \begin{split}
        \mathcal D_s:=\{\varphi\in H\cap H^s: \p_z \varphi \in H^s \}, \qquad
        \mathcal D_{s,\kappa}:=\{\varphi\in H\cap H^s: \p_z \varphi \in H_\kappa^s \},
    \end{split}
\end{equation}
with the corresponding norms
\begin{equation}\label{norm:D}
    \begin{split}
        \|\varphi\|_{s} \equiv \|\varphi\|_{\mathcal D_s} :=\|\varphi\| + \|\p_z \varphi\|_{H^s}, \qquad
        \|\varphi\|_{\tilde s} \equiv \|\varphi\|_{\mathcal D_{s,\kappa}} :=\|\varphi\| + \|\p_z \varphi\|_{\widetilde H^s}.
    \end{split}
\end{equation}
In this paper, we shall consider solutions at each time $t$ as random variables taking values in $\mathcal D_s$ or $\mathcal D_{s,\kappa}$, for $0<\kappa<\frac12$ and $s\geq 6$. %(in the hydrostatic EE case) or $s \geq 7$ (in the hydrostatic Navier-Stokes equations case). 
Notice that due to \eqref{ine:controls} and \eqref{equiv-beta}, one has the following equivalences: $$\|\varphi\|_{s}\sim \|\varphi\|_{H^s} + \|\p_z \varphi\|_{H^s}, \quad \quad \|\varphi\|_{\tilde s}\sim \|\varphi\|_{H^s} + \|\p_z \varphi\|_{\widetilde H^s},$$
and there exist some constants ${\tilde c}_\kappa$ and ${\tilde C}_\kappa$ such that
\begin{equation}\label{ctildekappa}
    {\tilde c}_\kappa \|\varphi\|_{s} \leq \|\varphi\|_{\tilde s} \leq {\tilde C}_\kappa \|\varphi\|_{s}.
\end{equation}

Let $\Sc = \left(\Omega, \Fc, \Fb = \Fct, \Pb\right)$ be a filtered stochastic basis. Denote by $\Uc$ a separable Hilbert space and by $W$ an $\Fb$-adapted cylindrical Wiener process with reproducing kernel Hilbert space $\Uc$ on $\Sc$. By picking a complete orthonormal basis $\lbrace e_k \rbrace_{k = 1}^\infty$ for $\Uc$, $W$ may be formally written as $W = \sum_{k=1}^\infty e_kW^k$, where $W^k$ are independent one-dimensional (1D) Wiener processes on $\Sc$.

Consider another separable Hilbert space $X$ and denote by $L_2(\Uc, X)$ the collection of Hilber-Schmidt operators from $\Uc$ into $X$. For a predictable process $\Phi \in L^2\bigl(\Omega; L^2\left(0, T; L_2\left(\Uc, X \right)\right)\bigr)$, one may define the It\^o stochastic integral with respect to the cylindrical Wiener process by
\[
	\int_0^T \Phi \, dW = \sum_{k = 1}^\infty \int_0^T \Phi e_k \, dW^k.
\]
Such integrals can also be extended to $\Phi$ with $\int_0^T \| \Phi \|^2_{L_2\left( \Uc, X \right)} \, dt < \infty$, $\mathbb{P}$-almost surely, and we refer readers to \cite[Section 4]{da2014stochastic} for more details. 

Next, we recall the definitions of Sobolev spaces with fractional time derivative, see {\it e.g.}\ \cite{simon1990sobolev}. Let $X$ be a separable Hilbert space as before, and let $t > 0$, $p > 1$ and $\alpha \in (0, 1)$. We define
\[
	W^{\alpha, p}(0, t; X) :=  \left\{ u \in L^p(0, t; X) \mid \int_0^t \int_0^t \frac{\vert u(s) - u(r) \vert_X^p}{|s - r|^{1+\alpha p}} \, dr \, ds < \infty \right\}
\]
and equip it with the norm
\[
	\| u \|^{p}_{W^{\alpha, p}(0, t; X)} := \int_0^t \vert u(s) \vert^p_X \, dt' + \int_0^t \int_0^t \frac{\vert u(s) - u(r) \vert_X^p}{|s - r|^{1+\alpha p}} \, dr \, ds.
\]

We conclude this subsection by recalling the following two versions of the Burkholder-Davis-Gundy (BDG) inequality, which will be repeatedly used in the sequel. For $\Phi \in L^2\left(\Omega; L^2\left(0, T; L_2\left(\Uc, X\right)\right)\right)$, one has
\begin{equation}
	\label{eq:bdg}
	\Eb \sup_{t \in \left[0, T\right]} \left| \int_0^t \Phi \, dW \right|^r_X \leq C_{r} \, \Eb \left( \int_0^T \| \Phi \|_{L_2(\Uc, X)}^2 \, dt \right)^{r/2}.
\end{equation}
In addition, if $p \geq 2$ and $\Phi \in L^p\left(\Omega; L^p\left(0, T; L_2\left(\Uc, X\right)\right)\right)$, then
\begin{equation}
	\label{eq:bdg.frac}
	\Eb \left| \int_0^\cdot \Phi \, dW \right|^p_{W^{\alpha, p}(0, T; X)} \leq C_{p} \, \Eb \int_0^T \| \Phi \|_{L_2(\Uc, X)}^p \, dt,
\end{equation}
for $\alpha \in [0, 1/2)$. For proofs, see, for instance, \cite{karatzas2012brownian}  and \cite[Lemma 2.1]{flandoli1995martingale}.

\subsection{Assumption and example of the noise term \texorpdfstring{$\sigma(u)$}{Lg}}
In this work, we assume that the noise is multiplicative and satisfies that, for $|\alpha|\leq s$,
\begin{equation}\label{noise-ine}
\begin{split}
    \|\sigma(u)\|_{L_2(\mathscr U, L^2)} &\leq C(1+\|u\|), 
    \\
    \|D^\alpha \p_z \sigma(u)\|_{L_2(\mathscr U, L^2)}&\leq C(1+\|u\|_{s}),
    \\
    \|\sigma(u) - \sigma(u^\#)\|_{L_2(\mathscr U, L^2)} &\leq C\|u-u^\#\|,
    \\
    \|D^\alpha \p_z \sigma(u) - D^\alpha \p_z \sigma(u^\#)\|_{L_2(\mathscr U, L^2)} &\leq C\|u-u^\#\|_{s},
\end{split}
\end{equation}
for $u, u^\# \in \mathcal D_{s}$. 
We present the following example for further understanding of condition~\eqref{noise-ine}. 
\begin{example}[Noise satisfies \eqref{noise-ine}]
\label{example.1}
Let $\psi_k, \chi_k \in W^{s+1,\infty}(\mathbb D)$ satisfy
\begin{equation*}
\sum_{k=1}^\infty \|\psi_k\|_{W^{s+1,\infty}}^2 = \kappa_1^2, \quad \sum_{k=1}^\infty \|\chi_k\|_{W^{s+1,\infty}}^2 = \kappa_2^2,
\end{equation*}
for some $\kappa_1, \kappa_2 \geq 0$. Define
\begin{equation}
	\label{example:sigma.maximal}
	\sigma(u) \zeta = \sum_{k=1}^\infty \zeta_k \left[  \psi_k u + \chi_k \right], \qquad \zeta = \sum\limits_{k=1}^\infty \zeta_k e_k \in \Uc.
\end{equation}
When $u, v\in H^s$, $\sigma$ satisfies
\begin{align*}
    \Vert \sigma(u) \Vert^2_{L_2(\Uc, L^2)} &= \sum_{k=1}^\infty \|   \psi_k u + \chi_k \|^2 \leq C(\kappa_1^2 \|u\|^2+ \kappa_2^2) \leq C_{\kappa_1,\kappa_2} (1+\|u\|^2), \\
    \Vert D^\alpha \p_z \sigma(u) \Vert^2_{L_2(\Uc, L^2)} &= \sum_{k=1}^\infty \| D^\alpha \p_z ( \psi_k u + \chi_k) \|^2 \leq C(\kappa_1^2 (\|u\|^2_{H^s} + \|v\|^2_{H^s}) + \kappa_2^2)
    \\
    &\qquad \leq C_{\kappa_1,\kappa_2} (1+\|u\|_{s}^2).
\end{align*}
Replacing $u$ by $u^\#$ in \eqref{example:sigma.maximal}, the Lipschitz continuity can be verified by:
\begin{align*}
        \Vert \sigma(u) - \sigma(u^\#)\Vert^2_{L_2(\Uc, L^2)} &= \sum_{k=1}^\infty \|  \psi_k (u-u^\#) \|^2
        \leq  C_{\kappa_1} \|u-u^\# \|^2, \\
        \Vert D^\alpha \p_z\sigma(u) - D^\alpha \p_z \sigma(u^\#)\Vert^2_{L_2(\Uc, L^2)} &= \sum_{k=1}^\infty \| D^\alpha \p_z \psi_k (u-u^\#) \|^2
        \leq  C_{\kappa_1} \|u-u^\# \|_{s}^2 .
\end{align*}
\end{example}

\subsection{Preliminary estimates} 
The spaces $\mathcal D_{s}$ and $\mathcal D_{s,\kappa}$ have the following property.
\begin{lemma}\label{lemma:cpt-emb}
The embedding $\mathcal D_{s+1} \hook \hook \mathcal D_{s}$ and $\mathcal D_{s+1,\kappa}\hook \hook \mathcal D_{s,\kappa}$ are compact.
\end{lemma}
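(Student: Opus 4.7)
The plan is to address the two embeddings in turn, with the Rayleigh-constrained case building on the unconstrained one. The first embedding will follow from applying Rellich-Kondrachov simultaneously to $\varphi_n$ and to $\partial_z \varphi_n$; the second will additionally require that the pointwise Rayleigh bound $\kappa \leq \partial_{zz}\varphi \leq 1/\kappa$ is preserved in the limit, which I will handle via a Sobolev embedding into $C^0$.

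\textbf{Step 1: $\mathcal D_{s+1}\hook\hook \mathcal D_s$.} Let $\{\varphi_n\}\subset \mathcal D_{s+1}$ be a bounded sequence. The inequality $\|u\|_{H^s}\leq \|u\|+\|v\|_{H^s}$ from \eqref{ine:controls}, applied at level $s+1$, shows that both $\{\varphi_n\}$ and $\{\partial_z \varphi_n\}$ are bounded in $H^{s+1}(\mathbb T^2)$. The classical Rellich-Kondrachov theorem $H^{s+1}\hook\hook H^s$ together with a diagonal extraction then produces a subsequence along which $\varphi_n \to \varphi$ and $\partial_z \varphi_n \to \psi$ in $H^s$. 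Distributional continuity of $\partial_z$ forces $\psi = \partial_z \varphi$, and the parity/zero-mean conditions defining $H$ pass to $L^2$-limits, so $\varphi \in \mathcal D_s$ and $\|\varphi_n-\varphi\|_s \to 0$.

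\textbf{Step 2: $\mathcal D_{s+1,\kappa}\hook\hook \mathcal D_{s,\kappa}$.} The norm equivalence \eqref{ctildekappa} shows that any sequence bounded in $\mathcal D_{s+1,\kappa}$ is bounded in $\mathcal D_{s+1}$, so Step~1 extracts a subsequence $\varphi_n \to \varphi$ in $\mathcal D_s$. It remains to verify $\varphi\in \mathcal D_{s,\kappa}$, i.e., that $\kappa \leq \partial_{zz}\varphi\leq 1/\kappa$ pointwise. From the $\mathcal D_s$-convergence, $\partial_{zz}\varphi_n \to \partial_{zz}\varphi$ in $H^{s-1}(\mathbb T^2)$; since $s\geq 6$, the Sobolev embedding $H^{s-1}(\mathbb T^2)\hookrightarrow C^0(\mathbb T^2)$ upgrades this to uniform convergence, so the pointwise bounds $\kappa \leq \partial_{zz}\varphi_n \leq 1/\kappa$ pass to the limit. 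Hence $\varphi \in \mathcal D_{s,\kappa}$, and invoking \eqref{ctildekappa} a second time converts $\|\varphi_n-\varphi\|_s \to 0$ into $\|\varphi_n-\varphi\|_{\tilde s}\to 0$.

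The only non-routine step is the verification that the pointwise Rayleigh constraint, which cuts out a non-linear subset of $\mathcal D_s$, is closed under the $\mathcal D_s$-convergence; this is exactly where the Sobolev embedding into $C^0$, guaranteed by the hypothesis $s\geq 6$, is indispensable. Everything else is a standard application of Rellich-Kondrachov together with the norm equivalences \eqref{ine:controls} and \eqref{ctildekappa}.
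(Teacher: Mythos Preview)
Your proof is correct and follows essentially the same strategy as the paper: apply Rellich--Kondrachov $H^{s+1}\hook\hook H^s$ separately to $\varphi_n$ and $\partial_z\varphi_n$, extract diagonally, and invoke the norm equivalence \eqref{ctildekappa} for the $\kappa$-constrained case. Your treatment is in fact more complete: the paper dispatches the second embedding with ``proved similarly due to the equivalence of norms'', whereas you explicitly verify that the pointwise Rayleigh bound $\kappa\leq\partial_{zz}\varphi\leq 1/\kappa$ survives in the limit via the Sobolev embedding $H^{s-1}(\mathbb T^2)\hookrightarrow C^0$---a point that needs checking since $\mathcal D_{s,\kappa}$ is not a linear subspace. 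One minor remark: this embedding only requires $s-1>1$, so your claim that $s\geq 6$ is ``indispensable'' here overstates the threshold, though the ambient hypothesis certainly suffices.
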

\begin{proof}
Let $\{\varphi_n\}$ be a bounded sequence in $\mathcal D_{s+1}$. Then $\{\|\varphi_n\|_{H^{s+1}}\}$ and $\{\|\p_z\varphi_n\|_{H^{s+1}}\}$ are both bounded. Since $H^{s+1} \hook \hook H^s$ is compact, there exists a subsequence $\{\varphi_{n_k}\}$ such that $\{\|\varphi_{n_k}\|_{H^s}\}$ is a Cauchy sequence. Since $\{\|\p_z\varphi_{n_k}\|_{H^{s+1}}\}$ is also bounded, there exists a further subsequence $\{\varphi_{n_{k_l}}\}$ such that $\{\|\p_z\varphi_{n_{k_l}}\|_{H^{s}}\}$ is also Cauchy. Therefore the subsequence $\{\varphi_{n_{k_l}}\}$ is Cauchy in $\mathcal D_{s}$, and thus the embedding $\mathcal D_{s+1} \hook \hook \mathcal D_{s}$ is compact.

The embedding $\mathcal D_{s+1, \kappa} \hook \hook \mathcal D_{s, k}$ can be proved similarly due to the equivalence of norms in \eqref{equiv-beta}.
\end{proof}

The following lemma will be useful in nonlinear estimates.
\begin{lemma}\label{lemma:nonlinear-ine}
Assume $f, g \in H^N$. Then for any multi-index $\alpha=\left(\alpha_1, \cdots\right.$, $\left.\alpha_n\right),|\alpha| \leq N$, we have
\begin{enumerate}
    \item $\left\|D^\alpha(f g)\right\|_{L^2} \leq C_N\left(\|f\|_{L^{\infty}} \left\|D^N g\right\|_{L^2}+\|g\|_{L^{\infty}} \left\|D^N f\right\|_{L^2}\right)$,
    \item $\left\|D^\alpha(f g)-f D^\alpha g\right\|_{L^2} \leq C_N\left(\|D f\|_{L^{\infty}} \left\|D^{N-1} g\right\|_{L^2}+\|g\|_{L^{\infty}} \left\|D^N f\right\|_{L^2}\right)$,
    \item $\left\|D^\alpha(f g)-f D^\alpha g - g D^{\alpha}f \right\|_{L^2} \leq C_N\left(\|D f\|_{L^{\infty}} \left\|D^{N-1} g\right\|_{L^2}+\|Dg\|_{L^{\infty}} \left\|D^{N-1} f\right\|_{L^2}\right)$
\end{enumerate}
\end{lemma}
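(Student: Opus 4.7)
The approach is the standard Moser--Kato--Ponce strategy: expand the derivative of the product via Leibniz's rule, single out the ``extreme'' terms that are claimed on the right-hand side, and bound the remaining interior terms by Hölder's inequality combined with Gagliardo--Nirenberg interpolation on $\mathbb{T}^2$. Concretely, for a multi-index $\alpha$ with $|\alpha| = k \le N$ I would write
\begin{equation*}
D^\alpha (fg) \;=\; \sum_{\beta \le \alpha} \binom{\alpha}{\beta} D^\beta f \, D^{\alpha-\beta} g .
\end{equation*}
In (i) the two extreme terms $\beta=0$ and $\beta=\alpha$ contribute $f\,D^\alpha g$ and $g\,D^\alpha f$, which are trivially controlled by $\|f\|_{L^\infty}\|D^N g\|_{L^2}$ and $\|g\|_{L^\infty}\|D^N f\|_{L^2}$ (using that $\|D^\alpha h\|_{L^2} \lesssim \|D^N h\|_{L^2}$ on the torus since zero-mean projections and inclusions hold, or by summing over all multi-indices). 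The interior terms $0<|\beta|<k$ are handled by Hölder with a pair of exponents $1/p_\beta+1/q_\beta = 1/2$ chosen so that Gagliardo--Nirenberg gives
\begin{equation*}
\|D^\beta f\|_{L^{p_\beta}} \lesssim \|f\|_{L^\infty}^{1-|\beta|/N}\|D^N f\|_{L^2}^{|\beta|/N},\qquad
\|D^{\alpha-\beta} g\|_{L^{q_\beta}} \lesssim \|g\|_{L^\infty}^{|\beta|/N}\|D^N g\|_{L^2}^{1-|\beta|/N},
\end{equation*}
after which Young's inequality absorbs the product of fractional powers into $\|f\|_{L^\infty}\|D^N g\|_{L^2} + \|g\|_{L^\infty}\|D^N f\|_{L^2}$.

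For (ii), the term $\beta=0$ is exactly $f\,D^\alpha g$ and is cancelled in the commutator, so only $1\le |\beta|\le k$ remain. The ``worst'' surviving term at $|\beta|=1$ is bounded directly by $\|Df\|_{L^\infty}\|D^{k-1}g\|_{L^2} \le \|Df\|_{L^\infty}\|D^{N-1}g\|_{L^2}$; the term at $|\beta|=k$ gives $\|g\|_{L^\infty}\|D^N f\|_{L^2}$; the intermediate terms are treated as above, but interpolating $Df$ between $\|Df\|_{L^\infty}$ and $\|D^N f\|_{L^2}$ (write $Df = h$ and apply GN to $D^{|\beta|-1} h$) and $g$ between $\|g\|_{L^\infty}$ and $\|D^{N-1}g\|_{L^2}$, then using Young's inequality to collapse cross terms into the two stated bounds.

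For (iii), both $\beta=0$ and $\beta=\alpha$ are cancelled, leaving only $1\le|\beta|\le k-1$. Now both factors carry at least one derivative, which is what allows the right-hand side to involve $\|Df\|_{L^\infty}$ and $\|Dg\|_{L^\infty}$ rather than the $L^\infty$ norms of $f,g$ themselves. Treating the interior terms by Hölder and GN on $\mathbb{T}^2$ (for instance, $\|D^2 f\|_{L^4} \lesssim \|Df\|_{L^\infty}^{1/2}\|D^3 f\|_{L^2}^{1/2}$ and its higher-order analogues) and then applying Young's inequality with the appropriate weights produces $\|Df\|_{L^\infty}\|D^{N-1}g\|_{L^2} + \|Dg\|_{L^\infty}\|D^{N-1}f\|_{L^2}$, as claimed.

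The only point that requires real care is the bookkeeping of GN exponents for the interior terms in (iii): one must verify that for each $1\le |\beta|\le k-1$ there exist $p_\beta,q_\beta \in [2,\infty]$ with $1/p_\beta+1/q_\beta = 1/2$ and interpolation parameters $\theta,\theta' \in [0,1]$ so that the Hölder--GN product is bounded by the symmetric sum above via Young's inequality. This is the main (but routine) obstacle; once the exponents are fixed, the rest of the argument is mechanical. Density of smooth functions in $H^N(\mathbb{T}^2)$ justifies carrying out all computations for smooth $f,g$ and passing to the limit.
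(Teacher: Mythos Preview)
Your proposal is correct and follows the same Leibniz--H\"older--Gagliardo--Nirenberg route the paper invokes (it simply cites Klainerman--Majda for (i)--(ii)). For (iii) the paper takes a small shortcut you might prefer: after subtracting the two endpoint terms, every remaining Leibniz term carries at least one derivative on each factor, so one writes $D^\beta f\cdot D^\gamma g = D^{\beta'}(Df)\cdot D^{\gamma'}(Dg)$ with $|\beta'|+|\gamma'|\le N-2$ and applies the per-term estimate underlying (i) to the pair $(Df,Dg)$ at level $N-2$, which yields the stated bound without redoing the exponent bookkeeping.
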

\begin{proof}
Parts (i)--(ii) follow from \cite[Appendix A.1]{klainerman1981singular}. Although the results stated in \cite[Appendix A.1]{klainerman1981singular} is for torus, the proof their works for general bounded domain. For part (iii), noticing that 
\begin{equation*}
    \begin{aligned}
\left\|D^\alpha(f g)-f D^\alpha g - gD^\alpha f\right \|_{L^2} & \leq C_{(\alpha)} \sum_{\beta+\gamma=\alpha, \beta,\gamma \neq 0}\left\|D^\beta f \cdot D^\gamma g\right\|_{L^2} \\
&=C_{(\alpha)} \sum_{\left|\beta^{\prime}\right|+|\gamma^{\prime}| \leqq N-2}\left\|D^{\beta^{\prime}}(D f) \cdot D^{\gamma^{\prime}} (Dg)\right\|_{L^2},
\end{aligned}
\end{equation*}
then the result follows by applying (i).
\end{proof}

We recall the following compactness results which are needed in this paper. The proofs can be found in \cite[Theorem 5]{simon1986compact} and \cite[Theorem 2.1]{flandoli1995martingale}, respectively.

\begin{lemma}
\label{lemma:aubin-lions}
a) \emph{(Aubin-Lions-Simon Lemma).} Let $X_2 \subset X \subset X_1$ be Banach spaces so that the embedding $X_2 \hook \hook X$ is compact and the embedding $X \hook X_1$ is continuous. Suppose $p \in (1, \infty)$ and $\alpha \in (0, 1)$, the following embedding is compact
\[
	L^p(0, t; X_2) \cap W^{\alpha, p}(0, t; X_1) \hook \hook L^p(0, t; X).
\]

b) Let $X_2 \subset X$ be a Banach space so that $X_2$ is reflexive and the embedding $X_2 \hook \hook X$ is compact. Assume $\alpha \in (0, 1]$, $p \in (1, \infty)$, and such that $\alpha p > 1$, then the following embedding is compact
\[
	W^{\alpha, p}(0, t; X_2) \hook \hook C([0, t], X).	
\]
\end{lemma}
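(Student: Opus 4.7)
\textbf{Proof proposal for Lemma~\ref{lemma:aubin-lions}.}

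Both assertions are well known in the literature, so my plan is to give short, clean arguments rather than original ideas; the paper in fact cites \cite{simon1986compact} and \cite{flandoli1995martingale} for the two parts. For part (a), the central tool I would rely on is an Ehrling-type interpolation: since $X_2 \hook \hook X \hook X_1$, for every $\varepsilon > 0$ there exists $C_\varepsilon > 0$ with
\[
    \|\varphi\|_X \leq \varepsilon \|\varphi\|_{X_2} + C_\varepsilon \|\varphi\|_{X_1}, \qquad \forall \varphi \in X_2.
\]
This is proved by contradiction, extracting a subsequence from $X_2$-bounded elements using the compact embedding $X_2 \hook \hook X$. Given a bounded sequence $\{u_n\}$ in $L^p(0,t;X_2) \cap W^{\alpha,p}(0,t;X_1)$, I would pass to a subsequence converging weakly in $L^p(0,t;X_2)$, and use a standard translation-in-time argument to show Cauchyness in $L^p(0,t;X)$: estimate $\|u_n(\cdot+h)-u_n(\cdot)\|_{L^p(0,t-h;X)}$ via Ehrling, where the $X_1$-piece is controlled by $|h|^\alpha$ thanks to the $W^{\alpha,p}$ bound (indeed, the double integral defining $\|\cdot\|_{W^{\alpha,p}}$ furnishes exactly such a modulus of continuity in $L^p$), and the $X_2$-piece is absorbed by choosing $\varepsilon$ small. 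Combining this with standard Kolmogorov--Riesz-type compactness criteria in $L^p(0,t;X)$ gives the claim.

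For part (b), the plan is to exploit the fractional Sobolev embedding in time. When $\alpha p > 1$, one has
\[
    W^{\alpha,p}(0,t;X_2) \hook C^{0, \alpha - 1/p}([0,t], X_2),
\]
which follows from a Morrey-type estimate: for $r < s$,
\[
    \|u(s)-u(r)\|_{X_2} \leq C \, |s-r|^{\alpha - 1/p}\, \|u\|_{W^{\alpha,p}(0,t;X_2)},
\]
obtained by writing $u(s)-u(r)$ as a double average over small intervals around $s$ and $r$ and estimating by the Gagliardo seminorm. Given a bounded sequence in $W^{\alpha,p}(0,t;X_2)$, this embedding yields uniform equicontinuity in $X_2$, hence in $X$, while pointwise precompactness in $X$ follows from $X_2 \hook \hook X$. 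An application of Arzelà--Ascoli in $C([0,t], X)$ then produces a convergent subsequence.

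The main obstacle, and the reason part (a) is not trivial, is the handling of the time translations: one has to carefully match the exponent $\alpha$ appearing in the Gagliardo seminorm to the modulus of continuity in $L^p$ and then combine it with Ehrling so that the $X_2$ bound only contributes an $\varepsilon$-small term. Once the translation estimate is in place, both (a) and (b) reduce to classical compactness criteria, and no new technology beyond interpolation and Arzelà--Ascoli is needed. Since these two statements are standard and the paper merely quotes them, I would simply invoke \cite{simon1986compact} and \cite{flandoli1995martingale} in the final write-up, but the above sketch describes the structure of their proofs.
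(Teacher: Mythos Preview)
Your proposal is correct and matches the paper's approach: the paper gives no proof at all for this lemma, simply citing \cite[Theorem 5]{simon1986compact} and \cite[Theorem 2.1]{flandoli1995martingale}, which is exactly what you conclude you would do in the final write-up. Your sketch of the underlying arguments (Ehrling interpolation plus translation estimates for (a), fractional Morrey embedding plus Arzel\`a--Ascoli for (b)) is a faithful outline of how those cited results are proved.
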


\section{Notion of Solutions}

In this paper, we consider both martingale solutions ({i.e.}, weak solutions in the stochastic sense) and pathwise solutions ({i.e.}, strong solutions in the stochastic sense). 

For martingale solutions, we consider the initial data to be given by a Borel measure $\mu_0$ in $\mathcal D_{s,2\kappa}$ such that, for some $M> 0$ and $0<\kappa<\frac12$,
\begin{equation}\label{condition:mu-zero}
    \mu_0\left(\left\{ u\in \mathcal D_{s,2\kappa}: \Vert u  \Vert_{\tilde s} \geq \frac M2 \right\} \right) = 0.
\end{equation}
In particular, this implies that for any 
$p \geq 1$,
\begin{equation}
	\label{eq:mu.zero}
	\int_{\mathcal D_{s,2\kappa}} \Vert u  \Vert_{\tilde s}^p  \, d\mu_0(u) < \infty.
\end{equation}

\begin{definition}[Martingale solution]\label{definition:martingale-sol}
Let $s\geq 6$ and $\mu_0$ satisfy \eqref{condition:mu-zero} with some positive constant $M>0$. Assume that $\sigma$ satisfies \eqref{noise-ine}. We call a quadruple $(\Sc, W, u, \eta)$ a \emph{local martingale solution} to system \eqref{PE-inviscid-system} if $\Sc = \left(\Omega, \Fc, \Fb, \Pb\right)$ is a stochastic basis, $W$ is an $\Fb$-adapted cylindrical Wiener process with reproducing kernel Hilbert space $\Uc$, $\eta$ is an $\Fb$-stopping time and $u\left(\cdot \wedge \eta\right)$ is a progressively measurable process such that $\eta > 0$ $\Pb$-a.s. and for all $T>0$,
\begin{equation}\label{eq:solution.regularity}
	\mathds{1}_{[0, \eta]}(\cdot)  u(\cdot)  \in L^2\left(\Omega; L^\infty\left( 0, T; \mathcal D_{s,\kappa} \right) \right), \quad  u\left( \cdot \wedge \eta \right) \in L^2\left(\Omega; C\left([0, T], \mathcal D_{s-1,\kappa}\right)\right), 
\end{equation}
where the law of $u(0)$ is $\mu_0$ and $u$ satisfies the following equality in $H$ for all $t\geq 0$:
\begin{equation}\label{eq:solution.def}
	u\left(t \wedge \eta\right) + \int_0^{t \wedge \eta} \big[u\p_x u + w\p_z u\big] dt' = u(0) + \int_0^{t \wedge \eta} \sigma(u) \, dW.
\end{equation}
Moreover, if $\eta = \infty$ $\Pb$-a.s.\ we call the triple $(\Sc, W, u)$ a \emph{global martingale solution}.
\end{definition}
\begin{remark}
The pressure gradient $\p_x p$ vanishes in \eqref{eq:solution.def} as we consider the equality in the space $H.$
\end{remark}

\begin{definition}[Pathwise solution]
\label{def:pathwise.sol}
Suppose that $s\geq 6$ and $\sigma$ satisfies \eqref{noise-ine}. Let $\Sc = \left(\Omega, \Fc, \Fb, \Pb \right)$ be a stochastic basis and let $W$ be a given $\Fb$-adapted cylindrical Wiener process with reproducing kernel Hilbert space $\Uc$. For $0<\kappa<\frac12$, let $u_0 \in L^2\left(\Omega;  \mathcal D_{s,2\kappa}\right)$ be an $\Fc_0$-measurable random variable.
\begin{enumerate}
	\item A pair $(u, \eta)$ is called a \emph{local pathwise solution} to system \eqref{PE-inviscid-system} if $\eta$ is an $\Fb$-stopping time and $u(\cdot \wedge \eta)$ is a progressively measurable stochastic process satisfying \eqref{eq:solution.def}, and $$u(\cdot \wedge \eta)\in L^2\left(\Omega; C\left([0, T], \mathcal D_{s,\kappa}\right)\right).$$
	\item A triple $\left(u, \{\eta_n\}_{n\in\mathbb N},\xi\right)$ is called a \emph{maximal pathwise solution} if each pair $(u,\eta_n)$ is a local pathwise solution, $\eta_n$ is increasing with $\lim\limits_{n\to \infty} \eta_n = \xi$ $\Pb$-a.s., and 
 \[
 \sup\limits_{t\in[0,\xi)} \|u\|_{\tilde s}^2 = \infty, \text{ or }\, \|\partial_{zz} u - \partial_{zz} u_0\|_{L^\infty} = \frac{\kappa}4 \quad \text{ on the set } \{\xi<\infty\}.
 \]
\end{enumerate}
\end{definition}

\begin{remark}
    The condition $\|\partial_{zz} u - \partial_{zz} u_0\|_{L^\infty} = \frac{\kappa}4$ is due to the choice of stopping time constructed in the proof of Theorem \ref{thm:main-1}. For more details see Section \ref{sec:proof-thm}. 
\end{remark}

\section{Martingale Solutions}\label{sec:MartSoln}

This section aims to establish the local existence of martingale solutions. To this end, we first define a modified system~\ref{PE-modified-system} via multiplying the $dt$ and $dW$ terms by some carefully-chosen cut-off functions. Then by considering its full viscous version as the approximation scheme 
and deriving uniform estimates (cf. Proposition~\ref{proposition:estimate-inviscid}), we obtain the global existence of martingale solutions to the modified system~\eqref{PE-modified-system} using compactness arguments (cf. 
 Proposition~\ref{prop:global.martingale.existence}). Finally, by defining a proper stopping time, we obtain the local existence of martingale solutions to the original system \eqref{PE-inviscid-system} (cf. Corollary~\ref{cor:loc.mart.sol}). 

\subsection{The modified system}
Let $\rho>0$  and consider the function $\theta_\rho(x)\in C^\infty([0,\infty))
$ to be a non-increasing cut-off function such that 
\begin{equation}\label{eqn:rho}
    \mathbbm{1}_{[0, \frac{\rho}2]} \leq \theta_\rho(x) \leq \mathbbm{1}_{[0, \rho]}.
\end{equation}
For $\rho>0$, $0<\kappa<\frac12$, and $s\geq 6$, we consider the modified version of system \eqref{PE-inviscid-system}:
\noeqref{PE-modified-1}
\noeqref{PE-modified-2}
\noeqref{PE-modified-3}
\noeqref{PE-modified-ic}
\begin{subequations}\label{PE-modified-system}
\begin{align}
    &d u + \theta_\rho(\|u\|_{s-1})\theta_\kappa(\|\partial_z v - \p_z v_0\|_{L^\infty})(u\partial_x u + w\partial_z u + \p_x p )dt \nonumber
    \\
    & \qquad\qquad\qquad\qquad\qquad= \theta_\rho(\|u\|_{s-1})\theta_\kappa(\|\partial_z v - \p_z v_0\|_{L^\infty})\sigma (u) dW , \label{PE-modified-1}  
    \\
    & \p_z p =0 , \label{PE-modified-2}
    \\
    & \p_x u + \p_z w = 0,  \label{PE-modified-3} 
    \\
    &u(0) = u_0, \label{PE-modified-ic}
\end{align}
\end{subequations}
where $v=\p_z u$ satisfies
\begin{align*}
    &d v + \theta_\rho(\|u\|_{s-1})\theta_\kappa(\|\partial_z v - \p_z v_0\|_{L^\infty})(u\partial_x v + w\partial_z v )dt \nonumber
    \\
    &\qquad\qquad\qquad\qquad\qquad= \theta_\rho(\|u\|_{s-1}) \theta_\kappa(\|\partial_z v - \p_z v_0\|_{L^\infty})\partial_z \sigma (u) dW , 
    \\
    &v(0) = \partial_z u_0. 
\end{align*}
Remark that the cut-off function $\theta_\kappa$ is to keep the local Rayleigh condition valid.

\subsection{The approximation scheme}\label{section:approximation}
When considering the approximation scheme of system \eqref{PE-modified-system}, the Galerkin approximation is usually a good choice \cite{hu2023local,brzezniak2021well,debussche2011local}. However, in our situation, the projection of certain nonlinear terms will destroy the essential cancellation, and thus prohibits us from closing the energy estimates in Sobolev spaces. See Remark \ref{rmk:cancellation} for more details.

Instead, we consider the viscous version of system \eqref{PE-modified-system} as the approximation scheme, for $n \in \Nb$:
\noeqref{PE-modified-system-approximation-1}
\noeqref{PE-modified-system-approximation-2}
\noeqref{PE-modified-system-approximation-3}
\noeqref{PE-modified-system-approximation-ic}
\begin{subequations}\label{PE-modified-system-approximation}
\begin{align}
    &d u_n + \theta_\rho(\|u_n\|_{s-1})\theta_\kappa(\|\partial_z v_n - \p_z v_0\|_{L^\infty})\left(u_n\partial_x u_n + w_n\partial_z u_n + \p_x p_n -\frac1n \partial_{xx} u_n  \right)dt \nonumber
    \\
    & \qquad\qquad\qquad\qquad\qquad= \theta_\rho(\|u_n\|_{s-1})\theta_\kappa(\|\partial_z v_n - \p_z v_0\|_{L^\infty})\sigma (u_n) dW , \label{PE-modified-system-approximation-1}  
    \\
    & \p_z p_n =0 , \label{PE-modified-system-approximation-2}
    \\
    & \p_x u_n + \p_z w_n = 0,  \label{PE-modified-system-approximation-3} 
    \\
    &u_n(0) = u_0, \label{PE-modified-system-approximation-ic}  
\end{align}
\end{subequations}
where $v_n=\p_z u_n$. Consequently, by taking $\p_z$ to system \eqref{PE-modified-system-approximation}, we have $v_n$ satisfying
\noeqref{PE-vorticity-modified-system-approximation-ic}
\begin{subequations}\label{PE-vorticity-modified-system-approximation}
\begin{align}
    &d v_n + \theta_\rho(\|u_n\|_{s-1})\theta_\kappa(\|\partial_z v_n - \p_z v_0\|_{L^\infty})\left(u_n\partial_x v_n + w_n\partial_z v_n -\frac1n \partial_{xx} v_n \right )dt \nonumber
    \\
    & \qquad\qquad\qquad\qquad\qquad= \theta_\rho(\|u_n\|_{s-1})\theta_\kappa(\|\partial_z v_n - \p_z v_0\|_{L^\infty})\p_z \sigma (u_n) dW , \label{PE-vorticity-modified-system-approximation-1}  
    \\
    &v_n(0) = \p_z u_0. \label{PE-vorticity-modified-system-approximation-ic}  
\end{align}
\end{subequations}

Analogue to Definition~\ref{def:pathwise.sol} and from \cite{brzezniak2021well,debussche2011local}, one can define pathwise solutions to system \eqref{PE-modified-system-approximation}, with \eqref{eq:solution.regularity} replaced by 
\begin{equation}\label{eq:solution.regularity-fullviscous}
	\mathds{1}_{[0, \eta]}(\cdot) u_n(\cdot)  \in L^2\left( \Omega; L^2\left( 0, T; \mathcal D_{s+1} \right) \right), \quad  u_n\left( \cdot \wedge \eta \right) \in L^2\left(\Omega; C\left([0, T], \mathcal D_{s,\kappa}\right)\right), 
\end{equation}
and \eqref{eq:solution.def} by
\begin{equation}\label{eq:solution.def-fullviscous}
	u_n\left(t \wedge \eta\right) + \int_0^{t \wedge \eta} \big[u_n\p_x u_n + w_n\p_z u_n - \frac1n \partial_{xx} u_n\big] dt' = u(0) + \int_0^{t \wedge \eta} \sigma(u_n) \, dW.
\end{equation}

For each $n\in\mathbb N$, the existence and uniqueness of the global pathwise solution to the stochastic PEs with either full viscosity or horizontal viscosity is a standard result, see for example, \cite{brzezniak2021well,debussche2011local,debussche2012global,glatt2008stochastic,glatt2011pathwise,saal2021stochastic}. Here a maximal pathwise solution $\left(u, \{\eta_n\}_{n\in\mathbb N}, \xi\right)$ is called global if $\xi = \infty$ $\Pb$-almost surely.
The procedure goes as follows. 
\begin{enumerate}
    \item Consider a modified version of the viscous stochastic PEs by adding a cut-off function before the nonlinear terms.
    \item Employ the Galerkin approximation to the modified system and establish the energy estimates.
    \item Use the compactness theorem (Lemma \ref{lemma:aubin-lions}) to derive the existence of martingale solutions to the modified system, and then apply a suitable stopping time to obtain the existence of martingale solutions to the original viscous stochastic PEs.
    \item Prove the pathwise uniqueness and apply the Gy\"ongy-Krylov criterion to prove the existence of the pathwise  solution to the original system.
    \item Finally, prove it is indeed a global pathwise solution.
\end{enumerate}

The following proposition concerns the existence and uniqueness of pathwise solutions to the modified viscous system \eqref{PE-modified-system-approximation}.

\begin{proposition}\label{prop:viscous-PEs}
 Suppose that $\sigma$ satisfies \eqref{noise-ine}, $p\geq 2$, and $0 <\kappa < \frac12$. Let $\mathcal S = \left(\Omega, \Fc, \Fb, \Pb \right)$ be a stochastic basis, and suppose that $u_0 \in L^p(\Omega; \mathcal D_{s,2\kappa})$ is an $\Fc_0$-measurable random variable. Then, for each $n \in \Nb$ fixed,  there exists a unique global pathwise solution to the modified system \eqref{PE-modified-system-approximation}. Moreover, for any given time $T>0$, we have the following estimates
 \begin{equation}\label{eq:estimate-viscous}
    \mathbb E \sup\limits_{t'\in[0,T]} \|u_n\|_{s}^p  \leq C_{n,p,\rho}\Big(1+ \mathbb E \|u_0\|_{s}^p\Big) e^{C_{n,p,\rho} T},
\end{equation}
where $C_{n,p,\rho}\to\infty$ as $n\to \infty.$
\end{proposition}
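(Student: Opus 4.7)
The plan is to follow the standard five-step program for viscous stochastic PEs outlined in the preceding text (see \cite{debussche2011local,glatt2011pathwise,saal2021stochastic}). For $n$ fixed, the $\tfrac{1}{n}\Delta$ term provides parabolic regularization, and combined with the cut-off factor $\theta_\rho(\|u_n\|_{s-1})\theta_\kappa(\|\p_z v_n-\p_z v_0\|_{L^\infty})$—which globally bounds the transport coefficients with respect to the $\mathcal D_s$ norm—this renders the construction essentially classical. I would begin by projecting \eqref{PE-modified-system-approximation} onto the first $N$ Fourier modes compatible with \eqref{BC-T2}, obtaining a finite-dimensional SDE whose drift and diffusion are globally Lipschitz: using the Sobolev embedding $H^{s-1}\hook W^{1,\infty}$ (available since $s\geq 6$) together with the Lipschitz assumption \eqref{noise-ine} on $\sigma$, classical SDE theory produces a unique global Galerkin solution $u_n^N$ for every $\Fc_0$-measurable $u_0\in L^p(\Omega;\mathcal D_{s,2\kappa})$.

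\textbf{Uniform a priori estimates and \eqref{eq:estimate-viscous}.} The core step is establishing \eqref{eq:estimate-viscous} uniformly in $N$. Apply $D^\alpha$ with $|\alpha|\le s$ to \eqref{PE-vorticity-modified-system-approximation-1} and pair with $D^\alpha v_n^N$. The top-order transport contributions $u_n^N\p_x D^\alpha v_n^N + w_n^N\p_z D^\alpha v_n^N$ vanish after integration by parts thanks to $\p_x u_n^N + \p_z w_n^N = 0$ (Fourier projection commutes with derivatives and integration by parts in the periodic setting, so the cancellation survives projection). The commutators are controlled by Lemma~\ref{lemma:nonlinear-ine}(ii) and bounded on $\{\theta_\rho>0\}$ by $C_\rho\|v_n^N\|_{H^s}^2$ via $\|u_n^N\|_{W^{1,\infty}}\leq C\|u_n^N\|_{s-1}\leq C\rho$. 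The viscous term supplies the nonnegative dissipation $\tfrac{2}{n}\|D^\alpha\nabla v_n^N\|^2$, used to absorb any residual top-order terms. Combined with a parallel estimate for $\|u_n^N\|^2$, applying It\^o's formula to $\|u_n^N\|_s^p$, invoking the BDG inequality \eqref{eq:bdg}, using the noise bound \eqref{noise-ine}, and closing with Gr\"onwall yields \eqref{eq:estimate-viscous}; the constant $C_{n,p,\rho}$ diverges as $n\to\infty$ because the splitting of commutators against $\tfrac{1}{n}\Delta$ via Young's inequality introduces an $n$-dependent factor.

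\textbf{Compactness, uniqueness, and passage to pathwise solutions.} The uniform bound, combined with the fractional-time BDG inequality \eqref{eq:bdg.frac}, gives a uniform control of $u_n^N$ in $W^{\alpha,p}(0,T;\mathcal D_{s-1})$ for some $\alpha\in(0,\tfrac12)$. Lemma~\ref{lemma:aubin-lions} together with the compact embedding $\mathcal D_{s}\hook\hook\mathcal D_{s-1}$ (Lemma~\ref{lemma:cpt-emb}) then yields tightness of the laws of $\{u_n^N\}_N$ on $L^2(0,T;\mathcal D_s)\cap C([0,T];\mathcal D_{s-1})$, and Skorokhod's representation theorem produces a martingale solution to \eqref{PE-modified-system-approximation} on a new stochastic basis. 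Pathwise uniqueness follows from an $L^2$ energy estimate on the difference of two solutions, using the Lipschitz bounds in \eqref{noise-ine} and the boundedness furnished by the cut-offs; the Gy\"ongy--Krylov criterion then upgrades convergence in probability and produces a pathwise solution on the original basis. Globality is immediate because \eqref{eq:estimate-viscous} precludes finite-time blow-up in $\mathcal D_s$.

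\textbf{Main obstacle.} The principal technical point is the $H^s$ energy identity for $v_n^N$: one must simultaneously handle all commutators from the nonlinear transport, extract the correct sign from the viscous dissipation to absorb top-order residues, and bookkeep the $n$-dependence so that the final constant takes exactly the form $C_{n,p,\rho}$. The Rayleigh-type cut-off $\theta_\kappa$ plays no active role at this stage—the $\tfrac{1}{n}\Delta$ dissipation alone suffices to close the estimate—but it must be propagated consistently through every step, since the subsequent limit $n\to\infty$ (performed elsewhere in the paper) is designed to yield precisely the modified inviscid system \eqref{PE-modified-system} in which this cut-off becomes indispensable.
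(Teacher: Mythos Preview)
Your strategy is correct and essentially matches the paper's: the paper likewise defers the full construction to the literature (citing \cite{brzezniak2021well}) and concentrates on the $\mathcal D_s$ a priori bound together with propagation of the Rayleigh condition. Two points deserve sharpening.

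First, your commutator claim is internally inconsistent. You assert that Lemma~\ref{lemma:nonlinear-ine}(ii) bounds all commutators by $C_\rho\|v_n\|_{H^s}^2$, and then invoke viscosity to absorb ``residual top-order terms''---but if the former held there would be nothing to absorb and no $n$-dependence. The specific culprit is $\langle \p_x^s w_n\,\p_z v_n,\p_x^s v_n\rangle$ (equivalently, the $\|D^s w_n\|$ contribution from Lemma~\ref{lemma:nonlinear-ine}(ii) when $D^\alpha=\p_x^s$): since $\p_x^s w_n=-\int_0^z \p_x^{s+1}u_n$, this carries one derivative too many and scales like $\|\p_x^{s+1}v_n\|\,\|v_n\|_{H^s}$. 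The paper isolates this term explicitly (using part~(iii) of Lemma~\ref{lemma:nonlinear-ine} to split off both endpoint contributions), applies Young's inequality to obtain $Cn\|v_n\|_{H^s}^2+\tfrac{1}{2n}\|\p_x^{s+1}v_n\|^2$, and feeds the latter into the dissipation $\tfrac{1}{n}\|\nabla\p_x^s v_n\|^2$. This is the precise origin of the $n$-dependence in $C_{n,p,\rho}$; your sketch gestures at it but should name the term.

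Second, you omit the verification that $u_n(t)\in\mathcal D_{s,\kappa}$, which is part of the solution regularity \eqref{eq:solution.regularity-fullviscous}. The paper devotes a separate step to this: since the drift and diffusion in the equation for $\p_z v_n-\p_z v_0$ are both multiplied by $\theta_\kappa(\|\p_z v_n-\p_z v_0\|_{L^\infty})$, the process freezes the instant this $L^\infty$ norm reaches $\kappa$, hence $\|\p_z v_n-\p_z v_0\|_{L^\infty}\le\kappa$ for all $t$, and combined with $2\kappa\le\p_z v_0\le\tfrac1{2\kappa}$ this yields $\kappa\le\p_z v_n\le\tfrac1\kappa$. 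So your remark that $\theta_\kappa$ ``plays no active role at this stage'' is slightly off---it is inert in the energy estimate \eqref{eq:estimate-viscous}, but it is the mechanism that confines the solution to $\mathcal D_{s,\kappa}$.
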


\begin{proof}
The existence of the pathwise solution follows from \cite{saal2021stochastic}. Although our modified system \eqref{PE-modified-system-approximation} is slightly different from \cite{saal2021stochastic} in terms of the cut-off functions and our initial condition is more regular, the procedure (i)--(v) mentioned above is still valid. As there is no vertical viscosity, the boundary conditions for system \eqref{PE-modified-system-approximation} coincide with those of the original system.
For brevity, we omit the details and only focus on the estimate \eqref{eq:estimate-viscous} and on showing that $u_n$ satisfies the convexity condition. These proofs will be useful when we prove Proposition \ref{proposition:estimate-inviscid} later. 

\medskip

\noindent\underline{\textbf{Estimate of $\|u_n\|$:}} By It\^o's formula, from system \eqref{PE-modified-system-approximation} we get
\begin{equation}\label{est:viscous-0}
    \begin{split}
        d \|u_n\|^p  = &-\frac pn \theta_\rho(\|u_n\|_{s-1}) \theta_\kappa(\|\partial_z v_n - \p_z v_0\|_{L^\infty}) \|\partial_x u_n\|^2 \|u_n\|^{p-2} dt 
        \\
   & + \theta_\rho^2(\|u_n\|_{s-1}) \theta_\kappa^2(\|\partial_z v_n - \p_z v_0\|_{L^\infty})\frac p2 \|\sigma(u_n)\|^2_{L_2(\mathscr U, L^2)} \|u_n\|^{p-2} dt 
   \\
   &+ \theta_\rho^2(\|u_n\|_{s-1})\theta_\kappa^2(\|\partial_z v_n - \p_z v_0\|_{L^\infty})\frac{p(p-2)}{2} \left\langle  \sigma(u_n), u_n \right\rangle^2 \|u_n\|^{p-4} dt 
   \\
   &+ \theta_\rho(\|u_n\|_{s-1})\theta_\kappa(\|\partial_z v_n - \p_z v_0\|_{L^\infty}) p\left\langle   \sigma(u_n), u_n \right\rangle \|u_n\|^{p-2} dW,
    \end{split}
\end{equation}
where the nonlinear term vanishes due to integration by parts.
By the Cauchy-Schwarz inequality, Young's inequality, and Assumption~\eqref{noise-ine}, we have
\begin{equation}\label{est:viscous-1}
    \begin{split}
        &\Big|\theta_\rho^2(\|u_n\|_{s-1}) \theta_\kappa^2(\|\partial_z v_n - \p_z v_0\|_{L^\infty})\frac p2 \|\sigma(u_n)\|^2_{L_2(\mathscr U, L^2)} \|u_n\|^{p-2}  \Big|
   \\
   &+ \Big|\theta_\rho^2(\|u_n\|_{s-1})\theta_\kappa^2(\|\partial_z v_n - \p_z v_0\|_{L^\infty})\frac{p(p-2)}{2} \left\langle  \sigma(u_n), u_n \right\rangle^2 \|u_n\|^{p-4}\Big|
   \\
   \leq & C_{p} (1+ \|u_n\|^p).
    \end{split}
\end{equation}
Using the Burkholder-Davis-Gundy inequality, the Cauchy-Schwartz inequality, Young's inequality, and the property of $\sigma$ in \eqref{noise-ine} gives
\begin{equation}\label{est:viscous-2}
\begin{split}
    &\mathbb E \sup\limits_{t'\in[0,t]} \left|\int_0^{t'} \theta_\rho(\|u_n\|_{s-1})\theta_\kappa(\|\partial_z v_n - \p_z v_0\|_{L^\infty})  p\left\langle   \sigma(u_n), u_n \right\rangle \|u_n\|^{p-2} dW\right| 
    \\
    \leq &C_{p} \mathbb E\Big(\int_0^t \| u_n\|^{2(p-1)}  (1+\|u_n\|)^2 dt'\Big)^{\frac12}
    \\
    \leq &\frac12 \mathbb E \sup\limits_{t'\in[0,t]} \|u_n\|^p 
    + C_{p} \mathbb E \int_0^t (1+ \|u_n\|^p ) dt'.
\end{split}
\end{equation}

\medskip

\noindent\underline{\textbf{Estimates of $\|D^\alpha v_n\|$ for $0\leq |\alpha|\leq s$:}} For $0\leq |\alpha|\leq s$, applying It\^o's formula to system \eqref{PE-vorticity-modified-system-approximation} brings
\begin{equation*}
\begin{split}
   d \|D^\alpha v_n\|^p = & -\frac pn \theta_\rho(\|u_n\|_{s-1})\theta_\kappa(\|\partial_z v_n - \p_z v_0\|_{L^\infty}) \|\partial_{x} D^\alpha v_n\|^{2} \|D^\alpha v_n\|^{p-2}dt
   \\
   &-p \theta_\rho(\|u_n\|_{s-1})\theta_\kappa(\|\partial_z v_n - \p_z v_0\|_{L^\infty}) \|D^\alpha v_n\|^{p-2} \left\langle D^\alpha(u_n \p_x v_n + w_n\partial_z v_n), D^\alpha v_n \right\rangle dt
   \\
   & + \theta_\rho^2(\|u_n\|_{s-1})\theta_\kappa^2(\|\partial_z v_n - \p_z v_0\|_{L^\infty})\frac p2 \|D^\alpha\partial_z  \sigma(u_n)\|^2_{L_2(\mathscr U, L^2)} \|D^\alpha v_n\|^{p-2} dt 
   \\
   &+ \theta_\rho^2(\|u_n\|_{s-1})\theta_\kappa^2(\|\partial_z v_n - \p_z v_0\|_{L^\infty})\frac{p(p-2)}{2} \left\langle D^\alpha\partial_z  \sigma(u_n), D^\alpha v_n \right\rangle^2 \|D^\alpha v_n\|^{p-4} dt 
   \\
   &+ \theta_\rho(\|u_n\|_{s-1})\theta_\kappa(\|\partial_z v_n - \p_z v_0\|_{L^\infty}) p\left\langle D^\alpha\partial_z  \sigma(u_n), D^\alpha v_n \right\rangle \|D^\alpha v_n\|^{p-2} dW.
\end{split}
\end{equation*}

Notice that the first term on the right-hand side is the dissipation term. For the nonlinear terms, since $\left\langle u_n\partial_x D^\alpha v_n + w_n\partial_z D^\alpha v_n, D^\alpha v_n \right\rangle =0$, we can write
\begin{equation*}
    \begin{split}
        &\left\langle D^\alpha(u_n \p_x v_n + w_n\partial_z v_n), D^\alpha v_n \right\rangle
        \\
        =& 
        \langle D^\alpha(u_n \p_x v_n + w_n\partial_z v_n) - (u_n\partial_x D^\alpha v_n + w_n\partial_z D^\alpha v_n) - (D^\alpha u_n\partial_x  v_n + D^\alpha w_n\partial_z  v_n), D^\alpha v_n \rangle
        \\
        &+ \langle  D^\alpha u_n\partial_x  v_n + D^\alpha w_n\partial_z  v_n, D^\alpha v_n\rangle =: I_1 + I_2.
    \end{split}
\end{equation*}
Let us consider the highest order terms, namely, when $|\alpha|=s$. The lower-order terms can be handled readily. For $I_1$, thanks to \eqref{ine:controls} and Lemma \ref{lemma:nonlinear-ine}, we have
\begin{equation*}
    \begin{split}
       |I_1| \leq &\ C\Big( (\|Du_n\|_{L^\infty} + \|Dw_n\|_{L^\infty}) \|D^{s-1} v_n\| + \|D v_n\|_{L^\infty}(\|D^{s-1}u_n\| + \|D^{s-1} w_n\|) \Big) \|v_n\|_{H^{s}}
       \\
       \leq & C \|u_n\|_{W^{2,\infty}} \|v_n\|_{H^{s}}^2.
    \end{split}
    \end{equation*}
For $I_2$, when $D^\alpha \neq \p_x^s$ we have
\begin{equation*}
    \begin{split}
        \Big|\langle  D^\alpha u_n\partial_x  v_n + D^\alpha w_n\partial_z  v_n, D^\alpha v_n\rangle \Big| &\leq C(\|D^\alpha u_n\| \| \partial_x v_n\|_{L^\infty} + \|D^\alpha w_n\| \|\partial_z v_n\|_{L^\infty}) \|v_n\|_{H^{s}} 
        \\
        &\leq C \|u_n\|_{W^{2,\infty}} \|v_n\|_{H^{s}}^2,
    \end{split}
    \end{equation*}
and when $D^\alpha = \partial_x^s$, we have
\begin{equation*}
    \begin{split}
        \left|\langle \p_x^s u_n \p_x v_n  + \p_x^s w_n\partial_z v_n, \p_x^s v_n\rangle \right| &\leq C(\|\p_x^s u_n\| \|\p_x v_n\|_{L^\infty} + \|\p_x^s w_n\| \|\p_z v_n\|_{L^\infty})\|\p_x^s v_n\|
        \\
        &\leq C(\|v_n\|_{H^s}\|u_n\|_{W^{2,\infty}} + \|\p_x^{s+1} v_n\|\|u_n\|_{W^{2,\infty}})\|v_n\|_{H^s}
        \\
        &\leq Cn (1+\|u_n\|_{W^{2,\infty}}^2) \|v_n\|_{H^s}^2 + \frac1{2n} \|\p_x^{s+1} v_n\|^2.
    \end{split}
\end{equation*}
Combining the estimates of $I_1$ and $I_2$ and summing over $|\alpha| \leq s$ gives
\begin{equation}\label{est:viscous-3}
    \begin{split}
       &\sum\limits_{|\alpha| \leq s}\Big| p \theta_\rho(\|u_n\|_{s-1})\theta_\kappa(\|\partial_z v_n - \p_z v_0\|_{L^\infty}) \|D^\alpha v_n\|^{p-2} \left\langle D^\alpha(u_n \p_x v_n + w_n\partial_z v_n), D^\alpha v_n \right\rangle \Big|
        \\
        \leq &  \theta_\rho(\|u_n\|_{s-1})\theta_\kappa(\|\partial_z v_n - \p_z v_0\|_{L^\infty})\left(C_{p}n (1+\|u_n\|_{W^{2,\infty}}^2) \|v_n\|_{H^{s}}^p + \frac p{2n} \|\p_x^s v_n\|^{p-2} \|\p_x^{s+1} v_n\|^2 \right)
        \\
        \leq & C_{p,\rho}n \|v_n\|_{H^{s}}^p + \frac p{2n}  \theta_\rho(\|u_n\|_{s-1})\theta_\kappa(\|\partial_z v_n - \p_z v_0\|_{L^\infty})\|\p_x^s v_n\|^{p-2} \|\p_x^{s+1} v_n\|^2 .
    \end{split}
\end{equation}
Next, by Cauchy-Schwarz inequality, Young's inequality, and Assumption~\eqref{noise-ine}, we have for any $|\alpha|\leq s,$
\begin{equation}\label{est:viscous-4}
    \begin{split}
        &\Big|\theta_\rho^2(\|u_n\|_{s-1})\theta_\kappa^2(\|\partial_z v_n - \p_z v_0\|_{L^\infty})\frac p2 \|D^\alpha\partial_z  \sigma(u_n)\|^2_{L_2(\mathscr U, L^2)} \|D^\alpha v_n\|^{p-2} \Big|
   \\
   &+ \Big|\theta_\rho^2(\|u_n\|_{s-1})\theta_\kappa^2(\|\partial_z v_n - \p_z v_0\|_{L^\infty})\frac{p(p-2)}{2} \left\langle D^\alpha\partial_z  \sigma(u_n), D^\alpha v_n \right\rangle^2 \|D^\alpha v_n\|^{p-4}\Big|  
   \\
   \leq & C_{p} (1+ \|u_n\|^p + \|v_n\|_{H^s}^p).
    \end{split}
\end{equation}
Finally, using the Burkholder-Davis-Gundy inequality, the Cauchy-Schwartz inequality, Young's inequality, and the property of $\sigma$ in \eqref{noise-ine} produces
\begin{equation}\label{est:viscous-5}
\begin{split}
    &\mathbb E \sup\limits_{t'\in[0,t]} \left|\int_0^{t'} \theta_\rho(\|u_n\|_{s-1})\theta_\kappa(\|\partial_z v_n - \p_z v_0\|_{L^\infty}) p\left\langle D^\alpha\partial_z  \sigma(u_n), D^\alpha v_n \right\rangle \|D^\alpha v_n\|^{p-2} dW \right| 
    \\
    \leq &C_{p} \mathbb E\Big(\int_0^t  \|D^\alpha v_n\|^{2(p-1)}  (1+\|u_n\| + \|v_n\|_{\widetilde H^s})^2 dt'\Big)^{\frac12}
    \\
    \leq &\frac12 \mathbb E \sup\limits_{t'\in[0,t]}  \|D^\alpha v_n\|^p 
    + C_{p} \mathbb E \int_0^t (1+ \|u_n\|^p + \|v_n\|_{ H^s}^p ) dt'.
\end{split}
\end{equation}

\medskip

\noindent\underline{\textbf{Combining the estimates:}} Combining the estimates \eqref{est:viscous-1}--\eqref{est:viscous-5} yields
\begin{equation}
    \begin{split}
         \mathbb E \sup\limits_{t'\in[0,t]} \|u_n\|_{s}^p
        \leq &C_p \Big( \mathbb E \sup\limits_{t'\in[0,t]} \|u_n\|^p + \sum\limits_{|\alpha|\leq s}\mathbb E \sup\limits_{t'\in[0,t]} \|D^\alpha v_n\|^p \Big) 
        \\
        \leq & C_{p,\rho}\mathbb E \Big[ 1 +  \|u_0\|^p+\|v_0\|_{H^s}^p + \int_0^t \big(1 +  \|u_n\|^p+n\|v_n\|_{ H^s}^p   \big) dt' \Big]
         \\
        \leq & C_{p,\rho}\mathbb E \Big[ 1 +  \|u_0\|_{\tilde s}^p + \int_0^t \big(1 +  n\|u_n\|_{s}^p   \big) dt' \Big].
    \end{split}
\end{equation}
Now, by the Gr\"onwall inequality \cite[Lemma 5.3]{glatt2009strong}, for $p\geq 2$ and any $T>0$, one has
\begin{equation}
    \mathbb E \sup\limits_{t'\in[0,T]} \|u_n\|_{s}^p  \leq C_{n,p,\rho}\Big(1+ \mathbb E \|u_0\|_{s}^p\Big) e^{C_{n,p,\rho} T}.
\end{equation}

\medskip

\noindent\underline{\textbf{Convexity condition:}} Finally, we need to verify that $\kappa\leq\p_z v_n \leq \frac1\kappa$ for all $(t,x,z) \in [0,T]\times \mathbb D$ and a.s., and thus $u_n$ takes value in $\mathcal D_{s,\kappa}.$  First, notice that $\partial_z v_n$ satisfies
\begin{equation}\label{eqn:pzv}
\begin{split}
      &d (\p_z v_n - \p_z v_0) + \theta_\rho(\|u_n\|_{s-1})\theta_\kappa(\|\partial_z v_n - \p_z v_0\|_{L^\infty})
      \\
      &\hspace{1.2in}\times(\p_z u_n \p_x v_n + u_n \p_{xz} v_n + \p_z w_n \p_z v_n + w_n\p_{zz} v_n - \frac1n \p_z \partial_{xx} v_n)dt
    \\
    = &\theta_\rho(\|u_n\|_{s-1})\theta_\kappa(\|\partial_z v_n - \p_z v_0\|_{L^\infty})\partial_{zz} \sigma (u_n) dW.
\end{split}
\end{equation}
By continuity in time of the global pathwise solution, for almost every  $\omega\in\Omega$, either there exists a time $t_1 = t_1(\omega)\leq T$ such that $\|\partial_z v_n - \p_z v_0\|_{L^\infty} = \kappa$ at $t_1$ and $\|\partial_z v_n - \p_z v_0\|_{L^\infty} < \kappa$ for $t\in[0,t_1)$, or such $t_1$ does not exist. In the first scenario, by the definition of $\theta_\kappa$ we know $\theta_\kappa(\|\partial_z v_n - \p_z v_0\|_{L^\infty}) = 0$ at $t_1$, and therefore $d (\p_z v_n - \p_z v_0) =0$. This implies that for $t\in[t_1,T]$ one has $\|\partial_z v_n - \p_z v_0\|_{L^\infty} = \kappa$. Thus 
\begin{align}\label{convexity-justify}
    \|\partial_z v_n - \p_z v_0\|_{L^\infty}\leq \kappa, \quad \forall t \in [0,T].
\end{align}
In the second scenario, \eqref{convexity-justify} automatically holds. Combining both yields that \eqref{convexity-justify} holds $\Pb$-a.s.. Since $0<\kappa<\frac12$ and $2\kappa \leq \p_z v_0 \leq \frac1{2\kappa}$ for all $(x,z)\in \mathbb D$ and a.s., and thanks to the regularity of $\partial_z v_n$, we have $\kappa\leq\p_z v_n \leq \frac1\kappa$ for all $(t,x,z) \in [0,T]\times \mathbb D$ and a.s.. 
\end{proof}

\begin{remark}
Notice that the estimate \eqref{eq:estimate-viscous} is not uniform in $n$, in particular, the right-hand side will diverge as $n\to \infty.$ Therefore, we need a more delicate estimate that is uniform in $n$ in order to show the convergence of $u_n$, which we present in the next subsection.
\end{remark}

\subsection{Uniform estimates}\label{section:uniform-estimate}
In this section, we establish the estimates that are uniform in $n$ for the approximation system~\eqref{PE-modified-system-approximation}. For this purpose, we need to employ the Rayleigh condition.

\begin{proposition}\label{proposition:estimate-inviscid}
Let $p\geq 2$ and $0<\kappa<\frac12$. Suppose that for a given stochastic basis $\mathcal S = \left(\Omega, \Fc, \Fb, \Pb \right)$, $u_0 \in L^p(\Omega; \mathcal D_{s,2\kappa})$ is an $\Fc_0$-measurable random variable. Suppose that $\sigma$ satisfies \eqref{noise-ine}.
Let $u_n$ be the unique solution to system \eqref{PE-modified-system-approximation}, and denote by $v_n=\p_z u_n$. Then, for any positive time $T>0$, the followings hold:
\begin{enumerate}
	\item $
	  \sup\limits_{n\in \N}	\Big(\mathbb E  \sup\limits_{t'\in[0,T]} \|u_n\|_{\tilde s}^p  \Big) \leq  C_{p,\rho,\kappa}\Big(1+ \mathbb E \|u_0\|_{\tilde s}^p\Big) e^{C_{p,\rho,\kappa} T};
	$
	
	\item For all $(t,x,z) \in [0,T]\times \mathbb D$ and almost surely, $\kappa\leq\p_z v_n \leq \frac1\kappa$, and $u_n\in L^p(\Omega; L^\infty(0,T;\mathcal D_{s,\kappa}))$;
	
		\item For  $\alpha\in [0,1/2)$, $\int_0^{\cdot}\theta_\rho(\|u_n\|_{s-1})\theta_\kappa(\|\partial_z v_n - \p_z v_0\|_{L^\infty}) \sigma(u_n) dW$ is bounded in
		
    \noindent	$
		L^p\left( \Omega; W^{\alpha, p}(0, T; \mathcal D_{s-1}) \right);	
	$
	\item Moreover, if $p\geq 4$, then $ u_n - \int_0^\cdot \theta_\rho(\|u_n\|_{s-1})\theta_\kappa(\|\partial_z v_n - \p_z v_0\|_{L^\infty}) \sigma(u_n) dW$ is bounded in \\
	$
		L^2\left( \Omega; W^{1, 2}(0, T; \mathcal D_{s-2}))\right).
	$
\end{enumerate}
\end{proposition}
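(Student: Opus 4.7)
\emph{Proof proposal.} The plan is to apply It\^o's formula to $\|u_n\|_{\tilde s}^p$ and exploit the local Rayleigh condition, preserved by the cut-off $\theta_\kappa$, to obtain bounds \emph{uniform} in $n$---the key improvement over Proposition~\ref{prop:viscous-PEs}, whose constants $C_{n,p,\rho}$ blow up as $n\to\infty$. In view of the decomposition
\begin{equation*}
    \|u_n\|_{\tilde s}^2 \;\sim\; \|u_n\|^2 \;+\; \sum_{|\alpha|\leq s,\, D^\alpha\neq \partial_x^s} \|D^\alpha v_n\|^2 \;+\; \Bigl\|\frac{\partial_x^s v_n}{\sqrt{\partial_z v_n}}\Bigr\|^2,
\end{equation*}
I would split the argument into three pieces. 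The $\|u_n\|^p$-piece is handled as in the first step of the proof of Proposition~\ref{prop:viscous-PEs}. The pieces $\|D^\alpha v_n\|^p$ with $D^\alpha\neq\partial_x^s$ are treated analogously, but with a crucial improvement: because at least one $z$-derivative is always free, the commutator $\langle D^\alpha(u_n\partial_x v_n+w_n\partial_z v_n),D^\alpha v_n\rangle$ can be bounded, via Lemma~\ref{lemma:nonlinear-ine} and \eqref{ine:controls}, by $C\|u_n\|_{W^{2,\infty}}\|v_n\|_{H^s}^2$, \emph{without} ever producing a $\partial_x^{s+1}v_n$ term that would require the $\tfrac1n\Delta$-dissipation for absorption. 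The cut-off $\theta_\rho(\|u_n\|_{s-1})$ and the Sobolev embedding $H^{s-1}\hookrightarrow W^{2,\infty}$ (valid for $s\geq 5$) then bound $\|u_n\|_{W^{2,\infty}}$ by $C_\rho$ uniformly in~$n$.

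The main obstacle is the weighted piece $A(t):=\int|\partial_x^s v_n|^2/\partial_z v_n\,dx\,dz$; the local Rayleigh condition, enforced pointwise by $\theta_\kappa$ through $\partial_z v_n\geq\kappa$, is precisely what makes $A$ well-defined and estimable. Setting $\phi=\partial_x^s v_n$ and $\beta=\partial_z v_n$, and applying It\^o's formula to $A^{p/2}$, the would-be top-order drift from $\partial_x^s(w_n\partial_z v_n)$ contributes a multiple of $\int(\phi/\beta)\,\partial_x^s w_n\cdot\beta\,dx\,dz = \int\phi\,\partial_x^s w_n\,dx\,dz$; using $\phi=\partial_z\partial_x^s u_n$, $\partial_z w_n=-\partial_x u_n$, and periodicity in $x$ and $z$, successive integrations by parts show that this integral vanishes, eliminating the $\|\partial_x^{s+1} v_n\|$-contribution that forced the $n$-dependence in~\eqref{est:viscous-3}. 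This is the stochastic analogue of the classical Brenier--Masmoudi cancellation. The remaining drift terms are lower-order and bounded by $C_\rho(1+A^{p/2})$; the It\^o corrections involving $d\beta$ (given by~\eqref{eqn:pzv}) and the cross-variation $\langle d\phi,d\beta\rangle$ are controlled using $\beta\geq\kappa$; the $\tfrac1n\Delta$-contribution yields a non-negative dissipation $\sim\tfrac{cp}{n}\int|\nabla\phi|^2/\beta$ together with $O(1/n)$ cross-terms (from differentiating $1/\beta$) absorbable by Young's inequality; and the noise and martingale terms are handled via~\eqref{noise-ine} and the BDG inequality~\eqref{eq:bdg}. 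Summing the three pieces and invoking the Gronwall lemma of \cite[Lemma 5.3]{glatt2009strong} yields~(i). Claim~(ii) was already established at the end of the proof of Proposition~\ref{prop:viscous-PEs}.

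For~(iii), the cut-off $\theta_\rho(\|u_n\|_{s-1})\leq\mathds{1}_{\{\|u_n\|_{s-1}\leq\rho\}}$ combined with~\eqref{noise-ine} gives $\|\theta_\rho\theta_\kappa\sigma(u_n)\|_{L_2(\Uc,\mathcal D_{s-1})}\leq C(1+\rho)$ pointwise in $(\omega,t)$, so the fractional BDG inequality~\eqref{eq:bdg.frac} directly delivers the claimed $L^p\bigl(\Omega;W^{\alpha,p}(0,T;\mathcal D_{s-1})\bigr)$ bound for any $\alpha\in[0,1/2)$. For~(iv), the process $u_n-\int_0^\cdot \theta_\rho\theta_\kappa\sigma(u_n)\,dW$ equals $u_0$ minus the Bochner integral of the drift, so its $W^{1,2}(0,T;\mathcal D_{s-2})$-norm reduces to bounding the drift in $L^2(\Omega;L^2(0,T;\mathcal D_{s-2}))$. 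The nonlinear terms $u_n\partial_x u_n$ and $w_n\partial_z u_n$ are bounded in $\mathcal D_{s-2}$ by $C\|u_n\|_{\mathcal D_{s-1}}^2$ via Lemma~\ref{lemma:nonlinear-ine}, hence by $C_\rho$ under the cut-off $\theta_\rho$, while $\tfrac1n\|\Delta u_n\|_{\mathcal D_{s-2}}\leq\|u_n\|_{s}$ is $L^2_{\omega,t}$-bounded by~(i) and~\eqref{ctildekappa}. The hypothesis $p\geq 4$ supplies the integrability required (via~(i) with $p=4$) to control the quadratic drift terms.
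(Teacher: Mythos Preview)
Your proposal is correct and follows essentially the same strategy as the paper: the three-piece decomposition of $\|u_n\|_{\tilde s}^p$, the cancellation $\int\partial_x^s w_n\,\partial_x^s v_n\,dx\,dz=0$ in the weighted estimate (which removes the sole $n$-dependent term from Proposition~\ref{prop:viscous-PEs}), and the use of the cut-offs together with Gronwall to close. The one minor tactical difference is your handling of the $O(1/n)$ viscous cross-terms in the weighted piece: the paper bounds these directly via $\|v_n\|_{W^{3,\infty}}\leq C\|u_n\|_{s-1}$ under $\theta_\rho$ (this is precisely where $s\geq6$ is used, cf.\ Remark~\ref{s6}), whereas your integration-by-parts-plus-Young route also works and in fact only needs $\|v_n\|_{W^{2,\infty}}$, hence $s\geq5$.
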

\begin{proof}
In the following estimates, we do not take advantage of the viscosity as the coefficient of viscosity vanishes to zero as $n\to \infty.$

\medskip

\noindent\underline{\textbf{Estimate of $\|u_n\|$:}} The argument repeats the same as in \eqref{est:viscous-0}--\eqref{est:viscous-2}.

\medskip

\noindent\underline{\textbf{Estimates of $\|D^\alpha v_n\|$ for $D^\alpha\neq \p_x^s$:}} 
First, by the definition of $\theta_\kappa$ and $2\kappa \leq \p_z v_0 \leq \frac1{2\kappa}$ for all $(x,z)\in \mathbb D$ and almost surely, we have
\begin{equation}\label{cutoff-kappa}
    \begin{split}
        &\theta_\kappa(\|\partial_z v_n - \p_z v_0\|_{L^\infty}) \|v_n\|^p_{H^s} = \theta_\kappa(\|\partial_z v_n - \p_z v_0\|_{L^\infty})  \|v_n\|^p_{H^s} \frac{(1+\lnorm\sqrt{\p_z v_n}\rnorm_{L^\infty})^p}{(1+\lnorm\sqrt{\p_z v_n}\rnorm_{L^\infty})^p}
        \\
        \leq & C_p \|v_n\|_{\widetilde H^{s}}^p \theta_\kappa(\|\partial_z v_n - \p_z v_0\|_{L^\infty})\left(1+\lnorm\sqrt{\p_z v_n}\rnorm_{L^\infty}^p\right) 
        \\
        \leq &C_p \|v_n\|_{\widetilde H^{s}}^p \left(1+\theta_\kappa(\|\partial_z v_n - \p_z v_0\|_{L^\infty})\lnorm\p_z v_n\rnorm_{L^\infty}^{\frac p2}\right)
        \\
        \leq &C_p \|v_n\|_{\widetilde H^{s}}^p \left(1+ \| \p_z v_0\|^{\frac p2}_{L^\infty} +\theta_\kappa(\|\partial_z v_n - \p_z v_0\|_{L^\infty})\lnorm\p_z v_n - \p_z v_0\rnorm_{L^\infty}^{\frac p2}\right)
        \leq C_{p,\kappa} \|v_n\|_{\widetilde H^{s}}^p.
    \end{split}
\end{equation}
Now repeat the computations as in Proposition~\ref{prop:viscous-PEs}, except for $D^\alpha = \p_x^s$. The nonlinear estimate \eqref{est:viscous-3} now becomes
\begin{equation}\label{est:inviscid-1}
    \begin{split}
        &\Big| p \theta_\rho(\|u_n\|_{s-1})\theta_\kappa(\|\partial_z v_n - \p_z v_0\|_{L^\infty}) \|D^\alpha v_n\|^{p-2} \left\langle D^\alpha(u_n \p_x v_n + w_n\partial_z v_n), D^\alpha v_n \right\rangle \Big|
        \\
        \leq & p \theta_\rho(\|u_n\|_{s-1})\theta_\kappa(\|\partial_z v_n - \p_z v_0\|_{L^\infty}) \|u_n\|_{W^{2,\infty}} \|v_n\|_{H^{s}}^p
        \\
        \leq & C_{p,\rho} \theta_\kappa(\|\partial_z v_n - \p_z v_0\|_{L^\infty})\|v_n\|_{H^{s}}^p
        \leq  C_{p,\rho,\kappa} \|v_n\|_{\widetilde H^{s}}^p.
    \end{split}
\end{equation}
Similar to \eqref{est:viscous-4}, one has
\begin{equation}\label{est:inviscid-2}
    \begin{split}
        &\Big|\theta^2_\rho(\|u_n\|_{s-1})\theta^2_\kappa(\|\partial_z v_n - \p_z v_0\|_{L^\infty})\frac p2 \|D^\alpha\partial_z  \sigma(u_n)\|^2_{L_2(\mathscr U, L^2)} \|D^\alpha v_n\|^{p-2} \Big|
   \\
   &+ \Big|\theta^2_\rho(\|u_n\|_{s-1})\theta^2_\kappa(\|\partial_z v_n - \p_z v_0\|_{L^\infty})\frac{p(p-2)}{2} \left\langle D^\alpha\partial_z  \sigma(u_n), D^\alpha v_n \right\rangle^2 \|D^\alpha v_n\|^{p-4}\Big|  
   \\
   \leq & C_{p,\kappa} (1+ \|u_n\|^p + \|v_n\|_{\widetilde H^s}^p).
    \end{split}
\end{equation}
The estimate analogue to \eqref{est:viscous-5} reads
\begin{equation}\label{est:inviscid-3}
\begin{split}
    &  \mathbb E \sup\limits_{t'\in[0,t]} \left|\int_0^{t'} \theta_\rho(\|u_n\|_{s-1})\theta_\kappa(\|\partial_z v_n - \p_z v_0\|_{L^\infty}) p\left\langle D^\alpha\partial_z  \sigma(u_n), D^\alpha v_n \right\rangle \|D^\alpha v_n\|^{p-2} dW \right| 
    \\
    \leq &C_{p,\kappa} \mathbb E\Big(\int_0^t  \|D^\alpha v_n\|^{2(p-1)}  (1+\|u_n\| + \|v_n\|_{\widetilde H^s})^2 d t'\Big)^{\frac12}
    \\
    \leq &\frac12 \mathbb E \sup\limits_{t'\in[0,t]}  \|D^\alpha v_n\|^p 
    + C_{p,\kappa} \mathbb E \int_0^t (1+ \|u_n\|^p + \|v_n\|_{\widetilde H^s}^p ) d t'.
\end{split}
\end{equation}

\medskip

\noindent\underline{\textbf{Estimate of $\lnorm\frac{\p_x^s v_n}{\sqrt{\p_z v_n}}\rnorm$:}} When $\alpha = (s,0),$ i.e., $D^\alpha = \partial_x^s$, we need to apply the Rayleigh condition in order to avoid the dependence of $n$ through viscosity.
Applying $\partial_x^s$ to \eqref{PE-vorticity-modified-system-approximation-1} yields
\begin{equation}\label{PE-modified-1-derivative}
\begin{split}
    d \partial_x^s v_n = &- \theta_\rho(\|u_n\|_{s-1})\theta_\kappa(\|\partial_z v_n - \p_z v_0\|_{L^\infty})\Big[u_n\partial_x^{s+1}v_n+w_n\partial_x^s \partial_z v_n+ \partial_x^s w_n \partial_z v_n \\
    & \qquad\qquad\qquad\qquad\qquad+ \sum\limits_{k=0}^{s-1} \binom{s}{k} \partial_x^{s-k} u_n \partial_x^{k+1} v_n + \sum\limits_{k=1}^{s-1} \binom{s}{k} \partial_x^{s-k} w_n \partial_x^{k} \partial_z v_n -\frac 1n \p_x^s \partial_{xx} v_n \Big]dt
    \\
    &+ \theta_\rho(\|u_n\|_{s-1})\theta_\kappa(\|\partial_z v_n - \p_z v_0\|_{L^\infty})\partial_x^s\partial_z \sigma (u_n) dW =: A_1 dt + A_2 dW,
\end{split}
\end{equation}
and applying $\p_z$ to \eqref{PE-vorticity-modified-system-approximation-1} gives
\begin{equation}
\begin{split}
      d \p_z v_n = &- \theta_\rho(\|u_n\|_{s-1})\theta_\kappa(\|\partial_z v_n - \p_z v_0\|_{L^\infty})(\p_z u_n \p_x v_n + u_n \p_{xz} v_n + \p_z w_n \p_z v_n + w_n\p_{zz} v_n - \frac1n \p_z \partial_{xx} v_n)dt
    \\
    &+ \theta_\rho(\|u_n\|_{s-1})\theta_\kappa(\|\partial_z v_n - \p_z v_0\|_{L^\infty})\partial_{zz} \sigma (u_n) dW =: B_1 dt + B_2 dW.
\end{split}
\end{equation}
Then It\^o's formula and integration by parts imply that 
\begin{equation}\label{ito-critical-term}
    \begin{split}
        &d\lnorm\frac{\p_x^s v_n}{\sqrt{\p_z v_n}}\rnorm^p = \frac p2\lnorm\frac{\p_x^s v_n}{\sqrt{\p_z v_n}}\rnorm^{p-2}\left(\left\langle 2A_1, \frac{\p_x^{s} v_n}{\p_z v_n} \right\rangle - \left\langle B_1,\frac{|\p_x^{s} v|^2}{|\p_z v_n|^2} \right\rangle\right)dt
        \\
        &\qquad +\frac{p(p-2)}{8} \lnorm\frac{\p_x^s v_n}{\sqrt{\p_z v_n}}\rnorm^{p-4} \left( \left\langle A_2,2\frac{\p_x^{s} v_n}{\p_z v_n} \right\rangle - \left\langle B_2, \frac{|\p_x^{s} v|^2}{|\p_z v_n|^2}\right\rangle \right)^2 dt
        \\
        &\qquad +\frac p4\lnorm\frac{\p_x^s v_n}{\sqrt{\p_z v_n}}\rnorm^{p-2} \left(\left\langle A_2^2,\frac2{\p_z v_n} \right\rangle - \left\langle 4\frac{\p_x^{s}v}{|\p_z v_n|^2} ,A_2B_2 \right\rangle +  \left\langle 2\frac{|\p_x^{s}v|^2}{|\p_z v_n|^3} ,B_2^2 \right\rangle \right) dt
        \\
        &\qquad + \frac p2\lnorm\frac{\p_x^s v_n}{\sqrt{\p_z v_n}}\rnorm^{p-2}\left(\left\langle 2\frac{\p_x^{s}v}{\p_z v_n}, A_2 \right\rangle - \left\langle \frac{|\p_x^{s}v|^2}{|\p_z v_n|^2}, B_2 \right\rangle  \right)dW
        \\
        =&- p\theta_\rho(\|u_n\|_{s-1})\theta_\kappa(\|\partial_z v_n - \p_z v_0\|_{L^\infty})\lnorm\frac{\p_x^s v_n}{\sqrt{\p_z v_n}}\rnorm^{p-2}\Bigg[\int \frac{1}{2\partial_z v_n} \big[u_n\partial_x + w_n\partial_z \big] |\partial_x^s v_n|^2   dxdz
        \\
        & \qquad  + \int \p_x^s w_n \p_x^s v_n dxdz + \int  \sum\limits_{k=0}^{s-1} \binom{s}{k} \frac{\partial_x^{s-k} u_n \partial_x^{k+1}  v_n \partial_x^s v_n}{\partial_z v_n}  dxdz  + \int  \sum\limits_{k=1}^{s-1} \binom{s}{k} \frac{\partial_x^{s-k} w_n \partial_x^{k} \partial_z v_n \partial_x^s v_n}{\partial_z v_n} dxdz 
        \\
        &\qquad + \frac 1n \lnorm\frac{ \p_x^{s+1} v_n}{\sqrt{\p_z v_n}}\rnorm^{2} - \frac1{2n} \int \left|\p_x^s v_n \right|^2 \partial_{xx} \left(\frac1{\p_z v_n}\right) dxdz
        \\
        &\qquad - \frac12 \int \frac{|\p_x^s v_n|^2}{|\p_z v_n|^2} \big( \p_z u_n \p_x v_n + u_n \p_{xz} v_n + \p_z w_n \p_z v_n + w_n\p_{zz} v_n - \frac1n \p_z \partial_{xx} v_n\big)  dxdz \Bigg] dt
        \\
        &+ \frac p8(p-2)\theta^2_\rho(\|u_n\|_{s-1})\theta^2_\kappa(\|\partial_z v_n - \p_z v_0\|_{L^\infty}) \lnorm\frac{\p_x^s v_n}{\sqrt{\p_z v_n}}\rnorm^{p-4}
        \\
        &\qquad \qquad \times\left( 2\left\langle \frac{\p_x^s v_n}{\p_z v_n}, \p_x^s\p_z \sigma(u_n) \right\rangle - \left\langle \left(\frac{\p_x^s v_n}{\p_z v_n}\right)^2, \p_{zz} \sigma(u_n) \right\rangle \right)^2 dt
        \\
        & + \frac p2 \theta^2_\rho(\|u_n\|_{s-1})\theta^2_\kappa(\|\partial_z v_n - \p_z v_0\|_{L^\infty}) \lnorm\frac{\p_x^s v_n}{\sqrt{\p_z v_n}}\rnorm^{p-2} 
        \\
        &\qquad \qquad \times\Big(\left\langle \frac 1{\p_z v_n} \p_x^s\p_z \sigma(u_n), \p_x^s\p_z \sigma(u_n) \right\rangle + \left\langle \frac {(\p_x^s v_n)^2}{(\p_z v_n)^3} \p_{zz} \sigma(u_n), \p_{zz} \sigma(u_n) \right\rangle 
        \\
        &\hspace{2in}
 -2\left\langle \frac {\p_x^s v_n}{(\p_z v_n)^2} \p_x^s\p_z \sigma(u_n), \p_{zz} \sigma(u_n) \right\rangle \Big)dt
        \\
        & + \frac p2\theta_\rho(\|u_n\|_{s-1})\theta_\kappa(\|\partial_z v_n - \p_z v_0\|_{L^\infty}) \lnorm\frac{\p_x^s v_n}{\sqrt{\p_z v_n}}\rnorm^{p-2}
        \\
        &\qquad\qquad\times\left( 2\left\langle \frac {\p_x^s v_n}{\p_z v_n} , \p_x^s\p_z \sigma(u_n) \right\rangle - \left\langle \left(\frac{\p_x^s v_n}{\p_z v_n}\right)^2 , \p_{zz} \sigma(u_n) \right\rangle \right) dW
        \\
        =: &I_1 dt + I_2 dt + I_3 dt + I_4 dW.
    \end{split}
\end{equation}

For the nonlinear terms in $I_1$, first by integration by parts and thanks to \eqref{PE-inviscid-3}, we have
\begin{equation*}
\begin{split}
      &\int \frac{1}{2\partial_z v_n} \big[u_n\partial_x + w_n\partial_z \big] |\partial_x^s v_n|^2   dxdz - \frac12 \int \frac{|\p_x^s v_n|^2}{|\p_z v_n|^2} \big( \p_z u_n \p_x v_n + u_n \p_{xz} v_n + \p_z w_n \p_z v_n + w\p_{zz} v_n 
      \\
      &\hspace{4in}- \frac1n \p_z \partial_{xx} v_n\big)  dxdz 
      \\
      = & - \frac12 \int \frac{|\p_x^s v_n|^2}{|\p_z v_n|^2} \big( \p_z u_n \p_x v_n + \p_z w_n \p_z v_n- \frac1n \p_z \partial_{xx} v_n \big)  dxdz 
      \\
      = &- \frac12 \int \frac{|\p_x^s v_n|^2}{|\p_z v_n|^2} \big( v_n \p_x v_n - \p_x u_n \p_z v_n - \frac1n \p_z \partial_{xx} v_n\big) dxdz.
\end{split}
\end{equation*}
Then with the Poincar\'e inequality \eqref{poincare}, the Sobolev inequality,  and the H\"older inequality, one has
\begin{equation}\label{inviscid-critical-est-nonlinear-1}
    \begin{split}
        &\left| p\theta_\rho(\|u_n\|_{s-1})\theta_\kappa(\|\partial_z v_n - \p_z v_0\|_{L^\infty})\lnorm\frac{\p_x^s v_n}{\sqrt{\p_z v_n}}\rnorm^{p-2} \frac12 \int \frac{|\p_x^s v_n|^2}{|\p_z v_n|^2} \big( v_n \p_x v_n - \p_x u_n \p_z v_n- \frac1n \p_z \partial_{xx} v_n \big) dxdz\right| 
        \\
        \leq & C_{p,\kappa} \theta_\rho(\|u_n\|_{s-1})\lnorm\frac{\p_x^s v_n}{\sqrt{\p_z v_n}}\rnorm^{p} (1+ \|v_n\|^2_{W^{3,\infty}})
        \\
        \leq & C_{p,\rho,\kappa} \lnorm\frac{\p_x^s v_n}{\sqrt{\p_z v_n}}\rnorm^{p}\leq C_{p,\rho,\kappa} \|v_n\|_{\widetilde H^s}^p,
    \end{split}
\end{equation}
where we have used the fact that $s\geq 6.$ Next, one can compute that
\begin{equation*}
    \partial_{xx}(\frac1{\partial_z v_n}) = - \frac{\partial_z \partial_{xx} v_n}{(\partial_z v_n)^2} + \frac{2|\partial_z \partial_{x} v_n|^2}{(\partial_z v_n)^3}.
\end{equation*}
Since $s\geq 6$, by the Sobolev inequality and the H\"older inequality we have
\begin{equation}\label{inviscid-critical-est-nonlinear-2}
\begin{split}
    &\left|p\theta_\rho(\|u_n\|_{s-1})\theta_\kappa(\|\partial_z v_n - \p_z v_0\|_{L^\infty})\lnorm\frac{\p_x^s v_n}{\sqrt{\p_z v_n}}\rnorm^{p-2} \frac1{2n} \int \left|\p_x^s v_n \right|^2 \partial_{xx} \left(\frac1{\p_z v_n}\right) dxdz \right| 
    \\
    \leq & C_{p,\kappa} \theta_\rho(\|u_n\|_{s-1})\lnorm\frac{\p_x^s v_n}{\sqrt{\p_z v_n}}\rnorm^{p} (\|v_n\|_{W^{3,\infty}} + \|v_n\|_{W^{2,\infty}}^2)
    \\
    \leq & C_{p,\rho,\kappa} \lnorm\frac{\p_x^s v_n}{\sqrt{\p_z v_n}}\rnorm^{p}\leq C_{p,\rho,\kappa} \|v_n\|_{\widetilde H^s}^p.
\end{split}
\end{equation}
Note that the constant is now independent of $n$.
For other nonlinear terms in $I_1$, one can repeat similar arguments as in  \textbf{Estimates of $\|D^\alpha v_n\|$ for $D^\alpha\neq \p_x^s$}, and use Lemma \ref{lemma:nonlinear-ine} to obtain 
\begin{equation}\label{inviscid-critical-est-nonlinear-3}
    \begin{split}
        &p\theta_\rho(\|u_n\|_{s-1})\theta_\kappa(\|\partial_z v_n - \p_z v_0\|_{L^\infty})\lnorm\frac{\p_x^s v_n}{\sqrt{\p_z v_n}}\rnorm^{p-2} \Bigg| \int  \sum\limits_{k=0}^{s-1} \binom{s}{k} \frac{\partial_x^{s-k} u_n \partial_x^{k+1}  v_n \partial_x^s v_n}{\partial_z v_n}  dxdz
        \\
        &\qquad \qquad \qquad \qquad \qquad \qquad \qquad \qquad\qquad \qquad \qquad+ \int  \sum\limits_{k=1}^{s-1} \binom{s}{k} \frac{\partial_x^{s-k} w_n \partial_x^{k} \partial_z v_n \partial_x^s v_n}{\partial_z v_n} dxdz\Bigg|
        \\
        \leq & C_{p,\rho,\kappa} \lnorm\frac{\p_x^s v_n}{\sqrt{\p_z v_n}}\rnorm^{p} \leq C_{p,\rho,\kappa} \|v_n\|_{\widetilde H^s}^p.
    \end{split}
\end{equation}
Lastly, by integration by parts and thanks to the boundary condition, we have the following cancellation:
\begin{equation}\label{eqn:cancellation}
\begin{split}
    &\int \p_x^s w_n \p_x^s v_n dxdz = \int \p_x^s w_n \p_x^s \p_z u_n dxdz = -\int \p_x^s \p_z w_n \p_x^s  u_n dxdz
    \\
    =& \int \p_x^s \p_x u_n\p_x^s  u_n dxdz = \frac12 \int \p_x(\p_x^s  u_n)^2 dxdz = 0.
\end{split}
\end{equation}
Notice that such cancellation is essential since it allows us to eliminate the highest derivative term $\p_x^s w$, and thus close the estimates. We've bounded all terms in $I_1$.

Next regarding $I_2$ and $I_3$, by the inequality \eqref{ine:controls}, Assumption~\eqref{noise-ine}, the Sobolev inequality, and the H\"older inequality, one has
\begin{equation}
    |I_2|+|I_3| \leq C_{p,\rho,\kappa} \lnorm\frac{\p_x^s v_n}{\sqrt{\p_z v_n}}\rnorm^{p}\leq C_{p,\rho,\kappa} \|v_n\|_{\widetilde H^s}^p.
\end{equation}

Finally, concerning $I_4$, using the Burkholder-Davis-Gundy inequality, the Cauchy-Schwartz inequality, Young's inequality, and the property of $\sigma$ in \eqref{noise-ine}, we deduce that
\begin{equation}
\begin{split}
    &\mathbb E \sup\limits_{t'\in[0,t]} \left|\int_0^{t'} I_4 dW \right| 
    \\
    \leq &C_{p,\rho,\kappa} \mathbb E\Big(\int_0^t \lnorm\frac{\p_x^s v_n}{\sqrt{\p_z v_n}}\rnorm^{2(p-1)}  (1+\|u_n\| + \|v_n\|_{\widetilde H^s})^2 dt'\Big)^{\frac12}
    \\
    \leq &\frac12 \mathbb E \sup\limits_{t'\in[0,t]} \lnorm\frac{\p_x^s v_n}{\sqrt{\p_z v_n}}\rnorm^p 
    + C_{p,\rho,\kappa} \mathbb E \int_0^t (1+ \|u_n\|^p + \|v_n\|_{\widetilde H^s}^p ) dt'.
\end{split}
\end{equation}

\medskip

\noindent\underline{\textbf{Proof of (i):}} Based on all previous estimates, we deduce
\begin{equation}
    \begin{split}
         \mathbb E \sup\limits_{t'\in[0,t]} \|u_n\|_{\tilde s}^p
        \leq &C_p \Big( \mathbb E \sup\limits_{t'\in[0,t]} \|u_n\|^p + \sum\limits_{\substack{0\leq |\alpha| \leq s\\ D^\alpha\neq \p_x^s}}\mathbb E \sup\limits_{t'\in[0,t]} \|D^\alpha v_n\|^p + \mathbb E \sup\limits_{t'\in[0,t]} \lnorm\frac{\p_x^s v_n}{\sqrt{\p_z v_n}} \rnorm^p\Big) 
        \\
        \leq & C_{p,\rho,\kappa}\mathbb E \Big[ 1 +  \|u_0\|^p+\|v_0\|_{\widetilde H^s}^p + \int_0^t \big(1 +  \|u_n\|^p+\|v_n\|_{\widetilde H^s}^p   \big) dt' \Big]
         \\
        \leq & C_{p,\rho,\kappa}\mathbb E \Big[ 1 +  \|u_0\|_{\tilde s}^p + \int_0^t \big(1 +  \|u_n\|_{\tilde s}^p   \big) dt' \Big].
    \end{split}
\end{equation}
With the Gr\"onwall inequality \cite[Lemma 5.3]{glatt2009strong}, for $p\geq 2$ and any $T>0$, one has
\begin{equation}\label{eq:estimate-1}
    \mathbb E \sup\limits_{t'\in[0,T]} \|u_n\|_{\tilde s}^p  \leq C_{p,\rho,\kappa}\Big(1+ \mathbb E \|u_0\|_{\tilde s}^p\Big) e^{C_{p,\rho,\kappa} T}.
\end{equation}

\medskip

\noindent\underline{\textbf{Proof of (ii):}} It follows the same as in \textbf{Convexity condition} in the proof of Proposition \ref{prop:viscous-PEs}.

\medskip

\noindent\underline{\textbf{Proof of (iii):}} Using the fractional Burkholder-Davis-Gundy inequality \eqref{eq:bdg.frac}, Assumption~\eqref{noise-ine}, and the estimate \eqref{eq:estimate-1}, we have
\begin{equation*}
    \begin{split}
        &\Eb \left| \int_0^\cdot \theta_\rho(\|u_n\|_{s-1})\theta_\kappa(\|\partial_z v_n - \p_z v_0\|_{L^\infty})\sigma(u_n) \, dW \right|^p_{W^{\alpha, p}\left(0, T; \mathcal D_{s-1}\right)} \leq C_p \Eb \int_0^T \| \sigma(u_n) \|^p_{L_2\left(\Uc, \mathcal D_{s-1}\right)} \, dt 
        \\
        \leq & C_p \Eb \left[  \int_0^T 1 + \|u_n\|_{s-1}^p \, dt \right] \leq C_p \Eb \left[  \int_0^T 1 + \|u_n\|_{\tilde s}^p \, dt \right]< \infty.
    \end{split}
\end{equation*}

\medskip

\noindent\underline{\textbf{Proof of (iv):}} Notice that, for any $t\in[0,T]$,
\begin{equation}
\begin{split}
    &u_n(t) - \int_0^t \theta_\rho(\|u_n\|_{s-1})\theta_\kappa(\|\partial_z v_n - \p_z v_0\|_{L^\infty})\sigma(u_n) dW
    \\
    = &u_n(0) - \int_0^t \theta_\rho(\|u_n\|_{s-1})\theta_\kappa(\|\partial_z v_n - \p_z v_0\|_{L^\infty})(u_n\p_x u_n + w_n\p_z u_n + \p_x p_n - \frac1n \partial_{xx} u_n) d t'.
\end{split}
\end{equation}
Since $s\geq 6$, the space $\mathcal D_{s-2}$ is a Banach algebra. By \eqref{equiv-beta}, \eqref{poincare}, and \eqref{eq:estimate-1}, we have
\begin{equation}\label{estimate:w12}
    \begin{split}
        &\mathbb E \left\| u_n - \int_0^\cdot \theta_\rho(\|u_n\|_{s-1})\theta_\kappa(\|\partial_z v_n - \p_z v_0\|_{L^\infty})\sigma(u_n) dW \right\|^2_{W^{1,2}(0,T;\mathcal D_{s-2})}
        \\
        &\leq C_{T,s,\kappa} \Big(1+ \|u_0\|^2_{\tilde s} + \sup\limits_{t\in[0,T]}\|u_n(t)\|^4_{\tilde s}\Big) < \infty.
    \end{split}
\end{equation}
Note that, here the pressure gradient disappears since $\p_x p_n$ is orthogonal to the space $\mathcal D_{s-2}$, and that $p\geq 4$ is required due to applying the estimate \eqref{eq:estimate-1} to the term $\sup\limits_{t\in[0,T]}\|u_n(t)\|^4_{\tilde s}$ in \eqref{estimate:w12}. 
\end{proof}
\begin{remark}\label{rmk:cancellation}
  The essential cancellation \eqref{eqn:cancellation} fails for the Galerkin system:
\begin{equation}
\int P_n(\partial_x^s w_n \partial_z v_n) \frac{\partial_x^s v_n}{\partial_z v_n} dxdz \neq \int \partial_x^s w_n \partial_x^s v_n dxdz,
\end{equation}
where $P_n$ is the projection onto a $n$-dimensional subspace of $H$, since $P_n(\partial_x^s w_n \partial_z v_n) \neq \partial_x^s w_n \partial_z v_n$.
\end{remark}

\begin{remark}\label{s6}
 The condition $s\geq 6$ is critical in obtaining the estimates in \eqref{inviscid-critical-est-nonlinear-1} and \eqref{inviscid-critical-est-nonlinear-2}.
\end{remark}

\subsection{Martingale solutions}\label{section:martingale}
In this subsection, we establish the global existence of martingale solutions to the modified system \eqref{PE-modified-system}, and the local existence of martingale solutions to the original system \eqref{PE-inviscid-system}. The proof of the existence of martingale solutions is a standard procedure with applying the Aubin-Lions lemma \ref{lemma:aubin-lions}, and we follow closely to \cite{brzezniak2021well,debussche2011local,hu2023local}. For completeness, we highlight the main steps below.

Given an initial distribution $\mu_0$ satisfying \eqref{eq:mu.zero} for some $p\geq 4$ (which is implied by \eqref{condition:mu-zero}), and a stochastic basis $\Sc = \left(\Omega, \Fc, \Fb, \Pb\right)$,
let $u_0$ be an $\Fc_0$-measurable $\mathcal D_{s,2\kappa}$-valued random variable with law $\mu_0$. For fixed $\rho \geq M$, for each $n\in\mathbb N$, let $u_n$ be the solutions to the viscous approximation system \eqref{PE-modified-system-approximation} with $u_n(0) = u_0 $ on the stochastic basis $\Sc$. Let $\Uc_0$ be an auxiliary Hilbert space such that
the embedding $\Uc \subseteq \Uc_0$ is Hilbert-Schmidt. For any positive time $T>0$, define
\begin{equation*}
	\Xc_{u} = L^2\left(0, T; \mathcal D_{s-1} \right) \cap C\left(\left[0, T\right], \mathcal D_{s-3}\right), \quad \Xc_W = C\left(\left[0, T\right], \Uc_0\right), \quad \Xc = \Xc_{u} \times \Xc_W,
\end{equation*}
Let $\mu_{u}^n$, $\mu^n_W$ and $\mu^n$ be laws of $u_n$, $W$ and $(u_n, W)$ on $\Xc_{u}$, $\Xc_W$ and $\Xc$, respectively, namely
\begin{equation}
	\label{eq:mu.measure}
	\mu^n_{u}(\cdot) = \Pb\left( \left\lbrace u_n \in \cdot \right\rbrace\right), \qquad \mu_W^n(\cdot) = \Pb\left( \left\lbrace W \in \cdot \right\rbrace \right), \qquad \mu^n = \mu_{u}^n \otimes \mu_W^n.
\end{equation}
Below, we provide the existence of global martingale solutions by establishing Propositions~\ref{prop:approximating.sequence}--\ref{prop:global.martingale.existence} that closely follow from \cite[Propositions 4.1 and 7.1]{debussche2011local} or \cite[Proposition~ 3.2--3.3]{brzezniak2021well}.

\begin{proposition}
\label{prop:approximating.sequence}
Let $\mu_0$ be a probability measure on $\mathcal D_{s,2\kappa}$ satisfying
\begin{equation}\label{condition:p}
    \int_{\mathcal D_{s,2\kappa}} \|u\|_{\tilde s}^p d\mu_0(u)<\infty  
\end{equation}
with $p\geq4$  
and let $(\mu^n)_{n \geq 1}$ be the sequence of measures defined in \eqref{eq:mu.measure}. Then there exists a probability space $(\tilde{\Omega}, \tilde{\Fc}, \tilde{\Pb})$, a subsequence $n_k \to \infty$ as $k \to \infty$ and a sequence of $\Xc$-valued random variables $(\tilde{u}_{n_k}, \tilde{W}_{n_k})$ such that
\begin{enumerate}
	\item $(\tilde{u}_{n_k}, \tilde{W}_{n_k})$ converges in $\Xc$ to $(\tilde{u}, \tilde{W}) \in \Xc$ almost surely,
	\item $\tilde{W}_{n_k}$ is a cylindrical Wiener process with reproducing kernel Hilbert space $\Uc$ adapted to the filtration $\left( \Fc_t^{n_k} \right)_{t \geq 0}$, where $\left( \Fc_t^{n_k} \right)_{t \geq 0}$ is the completion of $\sigma(\tilde{W}_{n_k}, \tilde{u}_{n_k}; 0 \leq t' \leq t)$,
	\item for $t\in[0,T]$, each pair $(\tilde{u}_{n_k}, \tilde{W}_{n_k})$ satisfies the equation 
	\begin{equation}
	\label{eq:appr.after.skorohod}
	\begin{split}
	    &d \tilde u_{n_k} + \big[\theta_\rho(\|\tilde u_{n_k}\|_{s-1})\theta_\kappa(\|\partial_z \tilde v_{n_k} - (\p_z \tilde v_{n_k})_0\|_{L^\infty})(\tilde u_{n_k}\partial_x \tilde u_{n_k} + \tilde w_{n_k}\partial_z \tilde u_{n_k}   - \frac1{n_k} \partial_{xx} \tilde u_{n_k})\big]dt  
	    \\
	    & \hspace{1in}
	    = \theta_\rho(\|\tilde u_{n_k}\|_{s-1})\theta_\kappa(\|\partial_z \tilde v_{n_k} - (\p_z \tilde v_{n_k})_0\|_{L^\infty})\sigma (\tilde u_{n_k}) d\tilde W_{n_k}  \quad \text{in } H,
	\end{split}
	\end{equation}
	where $\tilde v_{n_k} = \partial_z \tilde u_{n_k}$ and $\tilde w_{n_k}(x,z) = -\int_0^z \partial_x \tilde u_{n_k}(x, z') d z'$.
\end{enumerate}
\end{proposition}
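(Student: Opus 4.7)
The plan is to establish tightness of the sequence of laws $(\mu^n)_{n\geq 1}$ on $\Xc$, invoke Prokhorov's theorem to extract a weakly convergent subsequence, apply the Skorokhod representation theorem to realize the convergence as an almost sure one on a new probability space, and finally identify the limit objects so that the equations pass to the new basis.

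For tightness of $\mu^n_W$ on $\Xc_W = C([0,T]; \Uc_0)$, nothing is needed since $W$ has a fixed law independent of $n$ and this single measure is tight (inner-regular on a Polish space). The real work is tightness of $\mu^n_u$ on $\Xc_u = L^2(0,T;\mathcal{D}_{s-1}) \cap C([0,T],\mathcal{D}_{s-3})$. Here I would split $u_n$ as
\begin{equation*}
    u_n(t) = \Bigl[u_n(t) - \int_0^t \theta_\rho(\|u_n\|_{s-1})\theta_\kappa(\|\p_z v_n - \p_z v_0\|_{L^\infty})\sigma(u_n)\,dW\Bigr] + \int_0^t \theta_\rho \theta_\kappa \sigma(u_n)\,dW,
\end{equation*}
so that item (iv) of Proposition~\ref{proposition:estimate-inviscid} gives a uniform bound on the deterministic bracketed part in $L^2(\Omega; W^{1,2}(0,T;\mathcal{D}_{s-2}))$, while item (iii) bounds the stochastic integral in $L^p(\Omega; W^{\alpha,p}(0,T;\mathcal{D}_{s-1}))$ for any $\alpha \in [0,1/2)$. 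Combined with the uniform $L^p(\Omega; L^\infty(0,T;\mathcal{D}_{s,\kappa}))$ bound from item (i) and Lemma~\ref{lemma:cpt-emb} (which yields the compact embeddings $\mathcal{D}_s \hook\hook \mathcal{D}_{s-1}$ and $\mathcal{D}_{s-2} \hook\hook \mathcal{D}_{s-3}$), the two parts of Lemma~\ref{lemma:aubin-lions} give compact embeddings into $L^2(0,T;\mathcal{D}_{s-1})$ and $C([0,T];\mathcal{D}_{s-3})$, respectively. A Chebyshev-type argument then produces, for any $\varepsilon > 0$, a compact set in $\Xc_u$ carrying mass at least $1-\varepsilon$ under every $\mu^n_u$, which is the desired tightness.

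With tightness in hand, Prokhorov yields a weakly convergent subsequence $\mu^{n_k} \to \mu$, and Skorokhod's representation theorem (in its Jakubowski--Skorokhod form for separable metric spaces) produces the new basis $(\tilde\Omega,\tilde\Fc,\tilde\Pb)$ and $\Xc$-valued random variables $(\tilde u_{n_k}, \tilde W_{n_k}) \to (\tilde u, \tilde W)$ almost surely, proving (i). For (ii), $\tilde W_{n_k}$ shares the law of a cylindrical Wiener process on $\Uc$ for each $k$; by a Lévy-type characterization (independent Gaussian increments of the appropriate covariance, verified via the preservation of finite-dimensional distributions under the Skorokhod transfer) it is indeed such a process, and adaptation to $(\Fc^{n_k}_t)$ follows from the definition of the filtration together with the fact that $\tilde W_{n_k}(t+h) - \tilde W_{n_k}(t)$ remains independent of $(\tilde u_{n_k}(s), \tilde W_{n_k}(s))_{s\leq t}$ because this property is a statement about joint laws, which are preserved.

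The main obstacle is step (iii), identifying the stochastic integral after the change of probability space. I would follow the standard route of defining the discrepancy martingale
\begin{equation*}
    M_{n_k}(t) := \tilde u_{n_k}(t) - u_0^{n_k} + \int_0^t \theta_\rho \theta_\kappa\bigl(\tilde u_{n_k}\p_x \tilde u_{n_k} + \tilde w_{n_k} \p_z \tilde u_{n_k} - \tfrac{1}{n_k}\Delta \tilde u_{n_k}\bigr)dt',
\end{equation*}
verifying on the original basis that $M_{n_k}$ is an $(\Fc^{n_k}_t)$-martingale with quadratic variation equal to that of $\int_0^\cdot \theta_\rho\theta_\kappa\sigma(\tilde u_{n_k})\,d\tilde W_{n_k}$, so that by the martingale representation theorem (e.g.\ \cite{da2014stochastic}) the identity \eqref{eq:appr.after.skorohod} holds. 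Because the laws of both sides on $\Xc_u$ coincide under $\mu^{n_k}$, this identity transfers to the Skorokhod copies. The cut-offs $\theta_\rho(\|\cdot\|_{s-1})$ and $\theta_\kappa(\|\cdot - \p_z v_0\|_{L^\infty})$ are continuous in the topology of $\Xc_u$ (since $\mathcal{D}_{s-1} \hook W^{1,\infty}$ and similarly for the vertical derivative when $s \geq 6$), which is essential for this identification and for passing to the limit later; this is why the strong topology chosen for $\Xc_u$ is sharp enough. The assumption $p\geq 4$ enters exactly to justify the application of item (iv) of Proposition~\ref{proposition:estimate-inviscid}.
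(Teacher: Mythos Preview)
Your proposal is correct and follows essentially the same route as the paper: tightness of $\{\mu^n\}$ via the uniform estimates of Proposition~\ref{proposition:estimate-inviscid} combined with the Aubin--Lions compactness of Lemma~\ref{lemma:aubin-lions}, then Prokhorov and Skorokhod for (i), and a law-preservation/martingale-identification argument for (ii)--(iii). The paper's own proof is terser, simply citing \cite[Lemma~4.1]{debussche2011local} for the tightness step and \cite[Proposition~3.2]{brzezniak2021well}, \cite[Section~4.3.4]{bensoussan1995stochastic} for (ii)--(iii), but your expanded outline matches those references; one small clarification is that the continuity of the cut-offs in the $\Xc_u$ topology is not needed for (iii) itself (only measurability and equality of laws are), and becomes relevant only later when passing to the limit in Proposition~\ref{prop:global.martingale.existence}.
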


\begin{remark}
In \cite[Proposition 3.2]{brzezniak2021well}, the authors required a condition similar to \eqref{condition:p} with $p\geq 8$, so that their estimate (3.10) in \cite{brzezniak2021well} is valid. The corresponding estimate in our derivation is \eqref{estimate:w12}, which only requires $p\geq 4$.
\end{remark}

\begin{proof}
For part (i), to establish the tightness of $\lbrace \mu^n \rbrace_{n\geq 1}$  in $\Xc$, one can follow the argument in \cite[Lemma 4.1]{debussche2011local} with the spaces $D(A)$, $V$, $H$, and $V'$ therein being replaced by $\mathcal D_{s}$, $\mathcal D_{s-1}$, $\mathcal D_{s-2}$, and $\mathcal D_{s-3}$, and using Lemmas~\ref{lemma:aubin-lions} and estimates in Proposition~\ref{proposition:estimate-inviscid}. Consequently $\lbrace \mu^n \rbrace_{n\geq 1}$ is weakly compact by Prokhorov's theorem, and the first assertion follows immediately by the Skorokhod Theorem (\cite[Theorem 2.4]{da2014stochastic}). 

Parts (ii) and (iii) follow easily from \cite[Proposition 3.2]{brzezniak2021well}; see also \cite[Section 4.3.4]{bensoussan1995stochastic}.
\end{proof}

The following proposition gives the global existence of martingale solutions to the modified system \eqref{PE-modified-system}.

\begin{proposition}
\label{prop:global.martingale.existence}
Let $T>0$ be an arbitrary positive time, and let $(\tilde{u}_{n_k}, \tilde{W}_{n_k})$ be a sequence of $\Xc$-valued random variables on a probability space $(\tilde{\Omega}, \tilde{\Fc}, \tilde{\Pb})$ such that
\begin{enumerate}
	\item $(\tilde{u}_{n_k}, \tilde{W}_{n_k}) \to (\tilde{u}, \tilde{W})$ in the topology of $\Xc$, $\tilde{\Pb}$-a.s., that is,
	\begin{equation*}
		\tilde{u}_{n_k} \to \tilde{u} \ \text{in} \ L^2\left(0, T; \mathcal D_{s-1}\right) \cap C\left(\left[0, T \right], \mathcal D_{s-3}\right), \ \tilde{W}_{n_k} \to \tilde{W} \ \text{in} \ C\left(\left[0, T\right], \Uc_0 \right),
	\end{equation*}
	\item $\tilde{W}_{n_k}$ is a cylindrical Wiener process with reproducing kernel Hilbert space $\Uc$ adapted to the filtration $\left( \Fc_t^{n_k} \right)_{t \geq 0}$ that contains  $\sigma(\tilde{W}_{n_k}, \tilde{u}_{n_k}; 0 \leq t' \leq t)$,
	\item each pair $(\tilde{u}_{n_k}, \tilde{W}_{n_k})$ satisfies \eqref{eq:appr.after.skorohod}.
\end{enumerate}
Now, let $\tilde{\Fc}_t$ be the completion of $\sigma(\tilde{W}(s), \tilde{u}(s); 0 \leq t' \leq t)$ and $\tilde{\Sc} = (\tilde{\Omega}, \tilde{\Fc}, ( \tilde{\Fc}_t )_{t \geq 0}, \tilde{\Pb})$. Then $(\tilde{\Sc}, \tilde{W}, \tilde{u})$ is a martingale solution to the modified system \eqref{PE-modified-system} on the time interval $[0,T]$. As $T>0$ is arbitrary, the solution exists globally in time.
Moreover, $\tilde{u}$ satisfies
\begin{equation}
	\label{eq:martingale.solution.approx.regularity}
	\tilde{u} \in L^2\left( \tilde \Omega; L^\infty\left( 0, T; \mathcal D_{s,\kappa} \right) \cap C([0,T]; \mathcal D_{s-1,\kappa}) \right) .
\end{equation}
\end{proposition}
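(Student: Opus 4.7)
The plan is to pass to the limit in \eqref{eq:appr.after.skorohod} along the subsequence $\{n_k\}$ and identify the limit $(\tilde{\mathcal S},\tilde W,\tilde u)$ as a martingale solution. First I would transfer the uniform estimates of Proposition~\ref{proposition:estimate-inviscid} from the original probability space to $(\tilde\Omega,\tilde{\Fc},\tilde\Pb)$. Since $\tilde u_{n_k}$ has the same law as $u_{n_k}$ on $\Xc_u$, and the functionals $\sup_{t}\|\cdot\|_{\tilde s}^p$ and the relevant $W^{\alpha,p}$ and $W^{1,2}$ norms are Borel measurable, the bounds carry over verbatim. Combined with the a.s.\ convergence $\tilde u_{n_k}\to \tilde u$ in $L^2(0,T;\mathcal D_{s-1})\cap C([0,T],\mathcal D_{s-3})$ and the Banach--Alaoglu theorem, this yields $\tilde u\in L^p(\tilde\Omega;L^\infty(0,T;\mathcal D_{s,\kappa}))$ by weak-$*$ lower semicontinuity. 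The local Rayleigh bound $\kappa\leq \partial_z\tilde v\leq 1/\kappa$ is inherited from the pointwise bound on $\tilde v_{n_k}$ (Proposition~\ref{proposition:estimate-inviscid}(ii)) via the strong $L^2(0,T;\mathcal D_{s-1})$ convergence, after passing to a further a.e.\ convergent subsequence.

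Next I would pass to the limit termwise in \eqref{eq:appr.after.skorohod} tested against smooth functions. The viscous term $\tfrac{1}{n_k}\Delta\tilde u_{n_k}$ vanishes in $L^2(0,T;\mathcal D_{s-3})$ by the uniform $L^2\mathcal D_{s-1}$ bound. For the transport terms, writing $\tilde u_{n_k}\partial_x\tilde u_{n_k}$ as a bilinear form on $\mathcal D_{s-1}$ (which is a Banach algebra since $s\geq 6$), strong $L^2(0,T;\mathcal D_{s-1})$ convergence of $\tilde u_{n_k}$ and the corresponding convergence of $\tilde w_{n_k}$ via \eqref{w} give convergence in $L^1(0,T;\mathcal D_{s-2})$. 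The cut-off factors converge because $\theta_\rho$ and $\theta_\kappa$ are continuous, and $\|\tilde u_{n_k}\|_{s-1}$ as well as $\|\partial_z\tilde v_{n_k}-\partial_z\tilde v_0\|_{L^\infty}$ converge a.e.\ in $(\omega,t)$ thanks to the strong convergence in $L^2(0,T;\mathcal D_{s-1})$ together with the embedding $\mathcal D_{s-1}\hookrightarrow W^{1,\infty}$. A uniform integrability argument based on the Proposition~\ref{proposition:estimate-inviscid} bounds then promotes a.e.\ convergence to $L^1_\omega L^1_t$ convergence of the deterministic terms.

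The main obstacle is the convergence of the stochastic integral, since the driving noise $\tilde W_{n_k}$ itself varies with $k$. I would address this along the classical route (see \cite{bensoussan1995stochastic,debussche2011local}): the progressive measurability of $\tilde u_{n_k}$ with respect to the filtration $\Fc_t^{n_k}$ (part (ii)) together with the uniform bound $\|\sigma(\tilde u_{n_k})\|_{L_2(\Uc,\mathcal D_{s-1})}\le C(1+\|\tilde u_{n_k}\|_{\tilde s})$ from \eqref{noise-ine} and the Lipschitz estimate make the map $u\mapsto \theta_\rho\theta_\kappa \sigma(u)$ continuous from $L^2(0,T;\mathcal D_{s-1})$ into the predictable $L^2$-operator class. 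An extraction of the Hilbert--Schmidt norm and standard convergence of stochastic integrals under strong convergence of integrands and Wiener processes (e.g.\ \cite[Lemma 2.1]{debussche2011local} or Bensoussan's argument) then give
\[
\int_0^{\cdot}\theta_\rho(\|\tilde u_{n_k}\|_{s-1})\theta_\kappa(\|\partial_z\tilde v_{n_k}-\partial_z\tilde v_0\|_{L^\infty})\,\sigma(\tilde u_{n_k})\,d\tilde W_{n_k}\ \longrightarrow\ \int_0^{\cdot}\theta_\rho(\|\tilde u\|_{s-1})\theta_\kappa(\|\partial_z\tilde v-\partial_z\tilde v_0\|_{L^\infty})\,\sigma(\tilde u)\,d\tilde W
\]
in probability in $C([0,T],\mathcal D_{s-3})$.

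Combining these passages to the limit yields \eqref{eq:solution.def} in $H$ for $(\tilde{\mathcal S},\tilde W,\tilde u)$; since the convergence of $\tilde W_{n_k}$ to $\tilde W$ preserves the Wiener property with respect to the filtration generated by $(\tilde u,\tilde W)$ (Lévy's characterization via quadratic variation is closed under this limit), $\tilde W$ is a cylindrical Wiener process on $\tilde{\mathcal S}$. To obtain \eqref{eq:martingale.solution.approx.regularity}, the $L^\infty(0,T;\mathcal D_{s,\kappa})$ component is the weak-$*$ limit obtained above, while the $C([0,T];\mathcal D_{s-1,\kappa})$ component follows from the fact that $\tilde u$ solves the stochastic equation in $\mathcal D_{s-2}$: the deterministic integral is absolutely continuous in $\mathcal D_{s-2}$ by the uniform $L^\infty_t\mathcal D_{s,\kappa}$ bound, the stochastic integral lies in $C([0,T];\mathcal D_{s-1})$ by the BDG inequality \eqref{eq:bdg}, and interpolation between $L^\infty(0,T;\mathcal D_{s,\kappa})$ and $C([0,T];\mathcal D_{s-2})$ promotes weak continuity in $\mathcal D_{s,\kappa}$ to strong continuity in $\mathcal D_{s-1,\kappa}$. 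Since $T>0$ was arbitrary, a standard concatenation produces a global martingale solution.
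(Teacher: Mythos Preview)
Your proposal is correct and follows essentially the same route as the paper: transfer of the uniform bounds to $(\tilde\Omega,\tilde{\Fc},\tilde{\Pb})$ by equality of laws, Banach--Alaoglu for the $L^\infty(0,T;\mathcal D_{s,\kappa})$ bound, inheritance of the Rayleigh condition via strong $\mathcal D_{s-1}$ convergence, termwise passage to the limit (with Vitali for uniform integrability and \cite[Lemma~2.1]{debussche2011local} for the stochastic integral), and a splitting into stochastic and deterministic parts for time continuity. The only cosmetic difference is in that last step: the paper writes $\tilde u=Z+\bar u$ with $Z$ the stochastic convolution, observes $Z\in L^2(\tilde\Omega;C([0,T];\mathcal D_s))$ and $\frac{d}{dt}\bar u\in L^2(\tilde\Omega;L^2(0,T;\mathcal D_{s-1}))$, and then invokes the Aubin--Lions--Simon compactness \cite[Corollary~4]{simon1986compact} to obtain $\bar u\in C([0,T];\mathcal D_{s'})$ for $s'<s$, whereas you use the interpolation inequality $\|\cdot\|_{s-1}\le C\|\cdot\|_{s-2}^{\theta}\|\cdot\|_{s}^{1-\theta}$ directly on $\tilde u$. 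Both yield the same conclusion; your phrasing ``promotes weak continuity\dots to strong continuity'' is slightly misleading since you never actually establish weak $\mathcal D_{s,\kappa}$-continuity, but the interpolation bound alone, combined with $\tilde u\in L^\infty_t\mathcal D_{s,\kappa}\cap C_t\mathcal D_{s-2}$, already gives $C_t\mathcal D_{s-1,\kappa}$ without that intermediate step.
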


\begin{proof}
For any $\phi\in H$ and $t\in[0,T]$, using \eqref{eq:appr.after.skorohod} brings
\begin{equation}\label{weak-sol-galerkin}
\begin{split}
     &\Big\langle \tilde u_{n_k}(t), \phi\Big\rangle + \int_0^t \Big\langle \theta_\rho(\|\tilde u_{n_k}\|_{s-1})\theta_\kappa(\|\partial_z \tilde v_{n_k} - (\p_z \tilde v_{n_k})_0\|_{L^\infty})(\tilde u_{n_k}\partial_x \tilde u_{n_k} + \tilde w_{n_k}\partial_z \tilde u_{n_k}  -\frac1{n_k} \partial_{xx} \tilde u_{n_k} ) ,\phi \Big\rangle dt'
	\\
	&= \Big\langle\tilde  u_{n_k}(0), \phi\Big\rangle + \int_0^t \Big\langle\theta_\rho(\|\tilde u_{n_k}\|_{s-1})\theta_\kappa(\|\partial_z \tilde v_{n_k} - (\p_z \tilde v_{n_k})_0\|_{L^\infty}) \sigma(\tilde u_{n_k}) , \phi \Big\rangle d\tilde{W}_{n_k}.
\end{split}
\end{equation}
From assumption~(i), we deduce that
\begin{equation}\label{convergence:1}
    \tilde u_{n_k} \to \tilde u \text{ in } L^2\left(0, T; \mathcal D_{s-1}\right) \cap C\left(\left[0, T \right], \mathcal D_{s-3}\right) \; \tilde{\Pb}-a.s..
\end{equation}
Thanks to Proposition \ref{proposition:estimate-inviscid} we know that $\{\tilde u_{n_k}\}$ is uniformly bounded in $\left( \tilde \Omega;
    L^\infty\big( 0,T; \mathcal D_{s}   \big)\right)$. Then by the
Banach–Alaoglu theorem one has
\begin{equation*}
\begin{split}
    &\tilde{u} \in L^2 \left( \tilde \Omega;
    L^\infty\big( 0,T; \mathcal D_{s}   \big)\right),
    \qquad \tilde{u}_{n_k} \overset{\ast}{\rightharpoonup} \tilde{u} \text{ in } L^2 \left( \tilde \Omega; L^\infty\big( 0,T; \mathcal D_{s}   \big) \right).
\end{split}
\end{equation*}
On the other hand because $\kappa\leq \p_{zz} \tilde u_{n_k} \leq \frac1\kappa$ for any $n_k$ and any $(t,x,z)\in[0,T]\times \mathbb D$ a.s., by virtue of \eqref{convergence:1} we infer that $\kappa\leq \p_{zz} \tilde u \leq \frac1\kappa$ for any $(t,x,z)\in[0,T]\times \mathbb D$ a.s.. This implies that 
\begin{equation}\label{bound:v-and-vtilde}
    \tilde{u} \in L^2 \left( \tilde \Omega;
    L^\infty\big( 0,T; \mathcal D_{s,\kappa}   \big)\right).
\end{equation}
Moreover, from Lemma~\ref{proposition:estimate-inviscid} with $p>2$, one has the following uniform integrability for $\tilde {u}_{n_k}$: 
\begin{equation}\label{uniform-integrable}
\begin{split}
     \sup\limits_{k\in \mathbb N} \tilde \Eb \left[\left(\int_0^T \|\tilde u_{n_k}\|^2_{s-1} dt \right)^{\frac{p}{2}}  \right] \leq \sup\limits_{k\in \mathbb N} \tilde \Eb \left[\left(\int_0^T \|\tilde u_{n_k}\|^2_{\tilde s} dt \right)^{\frac{p}{2}}  \right]
     \leq  C_T  \sup\limits_{k\in \mathbb N} \tilde \Eb \sup\limits_{t\in [0,T]} \|\tilde u_{n_k}\|_{\tilde s}^p   < \infty.
\end{split}
\end{equation}
The Vitali Convergence Theorem, together with \eqref{convergence:1} and \eqref{uniform-integrable}, implies that
\begin{equation}\label{convergence:vnk-probablity}
    \tilde{u}_{n_k} \rightarrow \tilde{u} \text{ in } L^2 \left( \tilde \Omega; L^2\big( 0,T; \mathcal D_{s-1}   \big) \right).
\end{equation}
Consequently, a further subsequence, with a slight abuse of notation still denoted by $\tilde u_{n_k}$, satisfies
\begin{equation}\label{convergence:Vnk-pw}
    \tilde{u}_{n_k} \rightarrow \tilde{u}  \text{ in } \mathcal D_{s-1} \text{ for a.a. } (t,\omega) \in (0,T)\times \tilde \Omega.
\end{equation}

\medskip

\noindent \textbf{\underline{Convergence of the linear terms}:}
As the initial data $u_{n_k}(0) = u_0$ and by the convergence \eqref{convergence:1}, we have
\begin{equation}\label{convergence:ini}
    \Big\langle\tilde  u_{n_k}(0), \phi\Big\rangle  = \Big\langle\tilde  u_0, \phi\Big\rangle, \quad \Big\langle\tilde  u_{n_k}(t), \phi\Big\rangle \to  \Big\langle\tilde  u(t), \phi\Big\rangle \text{ a.s..}
\end{equation}
By Proposition \ref{proposition:estimate-inviscid}, for any $t\in[0,T]$, as $n_k\to \infty$, we have
\begin{equation*}
\begin{split}
     &\mathbb E \int_0^T \left|\int_0^t \Big\langle \theta_\rho(\|\tilde u_{n_k}\|_{s-1})\theta_\kappa(\|\partial_z \tilde v_{n_k} - (\p_z \tilde v_{n_k})_0\|_{L^\infty}) \frac1{n_k} \partial_{xx} \tilde u_{n_k}  ,\phi \Big\rangle dt' \right| dt\rightarrow 0.
\end{split}
\end{equation*}
Consequently, a further subsequence, still denoted by $\tilde u_{n_k}$, satisfies
\begin{equation}\label{converge:viscosity}
    \int_0^t \Big\langle \theta_\rho(\|\tilde u_{n_k}\|_{s-1})\theta_\kappa(\|\partial_z \tilde v_{n_k} - (\p_z \tilde v_{n_k})_0\|_{L^\infty}) \frac1{n_k} \partial_{xx} \tilde u_{n_k}  ,\phi \Big\rangle dt' \to 0 \quad \text{ for a.a. } (t,\omega) \in (0,T)\times \tilde \Omega.
\end{equation}

\medskip

\noindent \textbf{\underline{Convergence of the nonlinear terms}:}
For $t\in[0,T]$, since $(\tilde v_{n_k})_0 = \tilde v_0$, one has
\begin{align*}
        &\Bigg|\int_0^t \Big\langle \theta_\rho(\|\tilde u_{n_k}\|_{s-1})\theta_\kappa(\|\partial_z \tilde v_{n_k} - (\p_z \tilde v_{n_k})_0\|_{L^\infty})(\tilde u_{n_k}\partial_x \tilde u_{n_k} + \tilde w_{n_k}\partial_z \tilde u_{n_k} ) 
        \\
        &\hspace{2in}- \theta_\rho(\|\tilde u\|_{s-1})\theta_\kappa(\|\partial_z \tilde v - \p_z \tilde v_0\|_{L^\infty})(\tilde u\partial_x \tilde u + \tilde w \partial_z \tilde u)  ,\phi \Big\rangle dt' \Bigg|
        \\
        \leq &\Bigg|\int_0^t \Big\langle \left(\theta_\rho(\|\tilde u_{n_k}\|_{s-1}) -\theta_\rho(\|\tilde u\|_{s-1}) \right)\theta_\kappa(\|\partial_z \tilde v_{n_k} - \p_z \tilde v_0\|_{L^\infty})(\tilde u_{n_k}\partial_x \tilde u_{n_k} + \tilde w_{n_k}\partial_z \tilde u_{n_k} )  ,\phi \Big\rangle dt' \Bigg|
        \\
        &+ \Bigg|\int_0^t \Big\langle \theta_\rho(\|\tilde u\|_{s-1})\left(\theta_\kappa(\|\partial_z \tilde v_{n_k} - \p_z \tilde v_0\|_{L^\infty}) - \theta_\kappa(\|\partial_z \tilde v - \p_z \tilde v_0\|_{L^\infty})\right)(\tilde u_{n_k}\partial_x \tilde u_{n_k} + \tilde w_{n_k}\partial_z \tilde u_{n_k} )  ,\phi \Big\rangle dt' \Bigg|
        \\
        &+ \Bigg|\int_0^t \Big\langle  \theta_\rho(\|\tilde u\|_{s-1}) \theta_\kappa(\|\partial_z \tilde v - \p_z \tilde v_0\|_{L^\infty}) \left(\tilde u_{n_k}\partial_x \tilde u_{n_k} + \tilde w_{n_k}\partial_z \tilde u_{n_k} - \tilde u\partial_x \tilde u  - \tilde w \partial_z \tilde u\right) , \phi \Big\rangle dt' \Bigg| 
        \\
        =:& I_1 + I_2 + I_3.
\end{align*}

For the term $I_1$, due to \eqref{convergence:Vnk-pw} and the smoothness of  $\theta_\rho$, we obtain
\begin{equation}\label{convergence-theta}
   \theta_\rho(\|\tilde u_{n_k}\|_{s-1}) - \theta_\rho(\|\tilde u\|_{s-1}) \rightarrow 0 \qquad \text{for a.a. } (t,\omega) \in (0,T)\times \tilde \Omega.
\end{equation}
By Proposition \ref{proposition:estimate-inviscid} and inequalities~\eqref{poincare}--\eqref{ine:controls}, we know that
\begin{align*}
    &\Eb \int_0^T \left| \Big\langle \left(\theta_\rho(\|\tilde u_{n_k}\|_{s-1}) -\theta_\rho(\|\tilde u\|_{s-1}) \right)\theta_\kappa(\|\partial_z \tilde v_{n_k} - \p_z \tilde v_0\|_{L^\infty})(\tilde u_{n_k}\partial_x \tilde u_{n_k} + \tilde w_{n_k}\partial_z \tilde u_{n_k} ) ,\phi \Big\rangle  \right|dt 
    \\
    &\leq C\Eb \int_0^T\|\tilde u_{n_k}\|_{\tilde s}^2 \|\phi\| dt <\infty.
\end{align*}
The dominated convergence theorem together with \eqref{convergence-theta} yields that 
\begin{equation*}
    \begin{split}
        &\Eb \int_0^T I_1 dt 
        \leq C_T \Eb \int_0^T \Big| \Big\langle \left(\theta_\rho(\|\tilde u_{n_k}\|_{s-1}) -\theta_\rho(\|\tilde u\|_{s-1}) \right)\theta_\kappa(\|\partial_z \tilde v_{n_k} - \p_z \tilde v_0\|_{L^\infty})
        \\
        &\hspace{2in}\times(\tilde u_{n_k}\partial_x \tilde u_{n_k} + \tilde w_{n_k}\partial_z \tilde u_{n_k} ) ,\phi \Big\rangle  \Big|dt \rightarrow 0.
    \end{split}
\end{equation*}
Thinning the sequence if necessary, we conclude that $I_1\rightarrow 0$ for a.a. $(t,\omega) \in (0,T)\times \tilde \Omega$. 

For the term $I_2$, using \eqref{convergence:Vnk-pw} and the Sobolev inequality, since $s\geq 6$, one has
\[
\|\partial_z \tilde v_{n_k} - \p_z \tilde v_0\|_{L^\infty} -  \|\partial_z \tilde v - \p_z \tilde v_0\|_{L^\infty} \leq  
\|\partial_z \tilde v_{n_k} - \partial_z \tilde v\|_{L^\infty} \leq 
C\|\tilde{u}_{n_k} - \tilde{u}\|_{s-1} \rightarrow 0, 
\]
for a.a. $(t,\omega) \in (0,T)\times \tilde \Omega.$ Since $\theta_\kappa$ is smooth, one has
\begin{equation}\label{convergence-theta-2}
   \theta_\kappa(\|\partial_z \tilde v_{n_k} - \p_z \tilde v_0\|_{L^\infty}) - \theta_\kappa(\|\partial_z \tilde v - \p_z \tilde v_0\|_{L^\infty}) \rightarrow 0, \qquad \text{for a.a. } (t,\omega) \in (0,T)\times \tilde \Omega.
\end{equation}
Then following similarly as for $I_1$, we can get that $I_2\rightarrow 0$ for a.a. $(t,\omega) \in (0,T)\times \tilde \Omega.$ 

For the term $I_3$, by the H\"older inequality and the Sobolev inequality, together with  \eqref{convergence:Vnk-pw}, one obtains that, for a.a. $(t,\omega) \in (0,T)\times \tilde \Omega$,
\begin{align*}
    I_3 &\leq \int_0^T \|\phi\|\Big(\|(\tilde u_{n_k} - \tilde u) \partial_x \tilde u_{n_k}\| + \|\tilde u (\partial_x \tilde u_{n_k} - \partial_x \tilde u)\| + \|(\tilde w_{n_k} - \tilde w) \partial_z \tilde u_{n_k}\| + \|\tilde w (\partial_z \tilde u_{n_k} - \partial_z \tilde u)\| \Big) dt
    \\
    & \leq \|\phi\| \int_0^T \|(\tilde u_{n_k} - \tilde u) \|_{s-1} (\|\tilde u_{n_k}\|_{s-1} + \|\tilde u\|_{s-1} ) dt \rightarrow 0.
\end{align*}

Combining the estimates of $I_1$ to $I_3$ gives, for a.a. $(t,\omega) \in (0,T)\times \tilde \Omega$, 
\begin{equation}\label{convergence:deter}
\begin{split}
    &\int_0^t \Big\langle \theta_\rho(\|\tilde u_{n_k}\|_{s-1})\theta_\kappa(\|\partial_z \tilde v_{n_k} - (\p_z \tilde v_{n_k})_0\|_{L^\infty})(\tilde u_{n_k}\partial_x \tilde u_{n_k} + \tilde w_{n_k}\partial_z \tilde u_{n_k} ),\phi \Big\rangle dt'
    \\
    &\rightarrow
    \int_0^t \Big\langle  \theta_\rho(\|\tilde u\|_{s-1})\theta_\kappa(\|\partial_z \tilde v - \p_z \tilde v_0\|_{L^\infty})(\tilde u\partial_x \tilde u + \tilde w \partial_z \tilde u),\phi \Big\rangle dt'.
\end{split}
\end{equation}

\medskip

\noindent \textbf{\underline{Convergence of the stochastic terms}:}
Similar to the convergence of nonlinear terms, with Assumption~\eqref{noise-ine} and the boundedness of $\tilde u_{n_k}$ and $\tilde u$, one can compute
\begin{equation*}
    \begin{split}
        &\left\| \theta_\rho(\|\tilde u_{n_k}\|_{s-1})\theta_\kappa(\|\partial_z \tilde v_{n_k} - (\p_z \tilde v_{n_k})_0\|_{L^\infty})\sigma_{}(\tilde u_{n_k}) - 
 \theta_\rho(\|\tilde u\|_{s-1})\theta_\kappa(\|\partial_z \tilde v- \p_z \tilde v_0\|_{L^\infty})\sigma(\tilde u) \right\|_{L^2\left(0, T; L_2\left(\Uc, L^2\right)\right)}^2 
 \\
 \leq &C \left\| (\theta_\rho(\|\tilde u_{n_k}\|_{s-1}) -\theta_\rho(\|\tilde u\|_{s-1}))
 \theta_\kappa(\|\partial_z \tilde v_{n_k} - (\p_z \tilde v_{n_k})_0\|_{L^\infty})\sigma_{}(\tilde u_{n_k})\right\|_{L^2\left(0, T; L_2\left(\Uc, L^2\right)\right)}^2  
 \\
 & + C \left\| \theta_\rho(\|\tilde u\|_{s-1})
 (\theta_\kappa(\|\partial_z \tilde v_{n_k} - (\p_z \tilde v_{n_k})_0\|_{L^\infty} )- \theta_\kappa(\|\partial_z \tilde v- \p_z \tilde v_0\|_{L^\infty}))\sigma_{}(\tilde u_{n_k})\right\|_{L^2\left(0, T; L_2\left(\Uc, L^2\right)\right)}^2
 \\
 & + C \left\| \theta_\rho(\|\tilde u\|_{s-1})
  \theta_\kappa(\|\partial_z \tilde v- \p_z \tilde v_0\|_{L^\infty})(\sigma_{}(\tilde u_{n_k}) - \sigma(\tilde u))\right\|_{L^2\left(0, T; L_2\left(\Uc, L^2\right)\right)}^2 \to 0.
    \end{split}
\end{equation*}
Therefore, with the convergence result \eqref{convergence:1} and \eqref{convergence:Vnk-pw}, we have
\begin{align*}
    &\theta_\rho(\|\tilde u_{n_k}\|_{s-1})\theta_\kappa(\|\partial_z \tilde v_{n_k} - (\p_z \tilde v_{n_k})_0\|_{L^\infty})\sigma(\tilde u_{n_k}) 
    \\
    &\to \theta_\rho(\|\tilde u\|_{s-1})\theta_\kappa(\|\partial_z \tilde v- \p_z \tilde v_0\|_{L^\infty})\sigma(\tilde u) \text{ in } L^2\left(0,T; L_2\left(\Uc, L^2\right)\right), \; \tilde{\Pb}-a.s..
\end{align*} 
This implies the convergence in probability in $L^2\left(0, T; L_2\left(\Uc, L^2\right)\right).$ Thanks to \cite[Lemma 2.1]{debussche2011local}, from assumption (i) in Proposition \ref{prop:global.martingale.existence}, we obtain that
\begin{equation}
	\label{eq:sigma.dw.convergence}
	\int_0^\cdot \theta_\rho(\|\tilde u_{n_k}\|_{s-1})\theta_\kappa(\|\partial_z \tilde v_{n_k} - (\p_z \tilde v_{n_k})_0\|_{L^\infty})\sigma(\tilde u_{n_k}) \, d\tilde{W}_{n_k} \to \int_0^\cdot \theta_\rho(\|\tilde u\|_{s-1})\theta_\kappa(\|\partial_z \tilde v- \p_z \tilde v_0\|_{L^\infty})\sigma(\tilde u) \, d\tilde{W}
\end{equation}
in probability in $L^2\left(0,T; L^2\right)$. Then with Proposition~\ref{proposition:estimate-inviscid} and Assumption~\eqref{noise-ine}, for $p>2$, we get
\begin{equation}
    	\label{eq:sigma.nk.lp}
     \begin{split}
         &\sup_{k \in \Nb} \Eb \left[ \int_0^T \|\theta_\rho(\|\tilde u_{n_k}\|_{s-1})\theta_\kappa(\|\partial_z \tilde v_{n_k} - (\p_z \tilde v_{n_k})_0\|_{L^\infty}) \sigma(\tilde u_{n_k}) \|_{L_2\left(\Uc, L^2\right)}^{2} \, dt \right]^{\frac p2} 
         \\
         &\leq C_T \sup_{k \in \Nb} \Eb \left[ 1 + \sup_{t \in [0,T]} \| \tilde u_{n_k} \|^{p} \right] <\infty.
     \end{split} 		
\end{equation}
The estimate \eqref{eq:sigma.nk.lp} together with \eqref{eq:bdg} and the Vitali convergence theorem yield that the convergence \eqref{eq:sigma.dw.convergence} also occurs in the space $L^2(\tilde{\Omega}; L^2\left(0,T; L^2\right))$. Then for any $\mathcal R \subset [0,T] \times \tilde \Omega$ measurable, one has
\begin{equation*}
\begin{split}
    &\lim_{k \rightarrow \infty}\Eb 
     \int_0^T \chi_{\mathcal{R}} 
     \left( 
       \int_0^t \Big\langle 
      \theta_\rho(\|\tilde u_{n_k}\|_{s-1})\theta_\kappa(\|\partial_z \tilde v_{n_k} - (\p_z \tilde v_{n_k})_0\|_{L^\infty})\sigma(\tilde{u}_{n_k})
       ,  \phi\Big\rangle  d\tilde W_{n_k}
    \right) dt 
    \\
    &=
  \Eb 
    \int_0^T \chi_{\mathcal{R}} 
    \left( 
      \int_0^t \Big\langle 
    \theta_\rho(\|\tilde u\|_{s-1})\theta_\kappa(\|\partial_z \tilde v- \p_z \tilde v_0\|_{L^\infty})\sigma(\tilde{u})
     ,  \phi\Big\rangle  d \tilde W
   \right) dt.
\end{split} 
\end{equation*}
This implies that for a.a. $(t, \omega) \in [0,T] \times \tilde{\Omega}$,
\begin{equation}\label{convergence:sto}
\begin{split}
    &\int_0^t \Big\langle \theta_\rho(\|\tilde u_{n_k}\|_{s-1})\theta_\kappa(\|\partial_z \tilde v_{n_k} - (\p_z \tilde v_{n_k})_0\|_{L^\infty})\sigma(\tilde u_{n_k}), \phi \Big\rangle d\tilde{W}_{n_k}
    \\
    &\rightarrow \int_0^t \Big\langle \theta_\rho(\|\tilde u\|_{s-1})\theta_\kappa(\|\partial_z \tilde v- \p_z \tilde v_0\|_{L^\infty}) \sigma(\tilde u), \phi \Big\rangle d\tilde{W}.
\end{split} 
\end{equation}

\medskip

\noindent \textbf{\underline{Combining the estimates}:} Applying the convergences   \eqref{convergence:Vnk-pw},  \eqref{convergence:ini}, \eqref{converge:viscosity}, \eqref{convergence:deter} and \eqref{convergence:sto} to \eqref{weak-sol-galerkin}, we infer that for all $\phi \in H$ and for a.a. $(t, \omega) \in [0,T] \times \tilde{\Omega}$,
\begin{equation}\label{equality-before-st}
\begin{split}
     &\Big\langle \tilde u(t), \phi\Big\rangle + \int_0^t \Big\langle \theta_\rho(\|\tilde u\|_{s-1})\theta_\kappa(\|\partial_z \tilde v - \p_z \tilde v_0\|_{L^\infty})(\tilde u\partial_x \tilde u + \tilde w\partial_z \tilde u  ) ,\phi \Big\rangle dt'
	\\
	&= \Big\langle\tilde  u(0), \phi\Big\rangle + \int_0^t \Big\langle\theta_\rho(\|\tilde u\|_{s-1})\theta_\kappa(\|\partial_z \tilde v - \p_z \tilde v_0\|_{L^\infty}) \sigma(\tilde u) , \phi \Big\rangle d\tilde{W}.
\end{split}
\end{equation}
As $T$ is arbitrary, we have obtained the desired result that  $(\tilde{\Sc}, \tilde{W}, \tilde{u})$ is a global martingale solution to the modified system \eqref{PE-modified-system}. Note that the pressure gradient disappears due to the orthogonality in $H$.

\medskip

\noindent \textbf{\underline{Continuity in time}:} It remains to prove \eqref{eq:martingale.solution.approx.regularity}. The $L^\infty$ bound in $\mathcal D_{s,\kappa}$ has been shown in \eqref{bound:v-and-vtilde}. The ``continuity in time'' property is essential to justify the stopping time defined later in \eqref{stopingtime:eta} is positive almost surely, which is used to define the local martingale solution to the original system~\eqref{PE-inviscid-system}.
The proof follows similarly as in \cite[Section 7.3]{debussche2011local} and \cite[Proposition 3.4]{hu2023local}. By the property of $\sigma$ in \eqref{noise-ine} and the regularity of $\tilde u$ \eqref{bound:v-and-vtilde}, as $\mathcal D_{s,\kappa}\subseteq \mathcal D_s$, we have
\[
	\theta_\rho(\|\tilde u\|_{s-1})\theta_\kappa(\|\partial_z \tilde v - \p_z \tilde v_0\|_{L^\infty}) \sigma(\tilde u) \in L^2\left(\tilde{\Omega}; L^2\left(0,T; L_2\left(\Uc, \mathcal D_{s}\right)\right)\right).
\]
Then there exists a continuous in time version of the solution to
\[
	dZ  = \theta_\rho(\|\tilde u\|_{s-1})\theta_\kappa(\|\partial_z \tilde v - \p_z \tilde v_0\|_{L^\infty}) \sigma(\tilde u) \, d\tilde{W}, \qquad Z(0) = \tilde u_0,
\]
satisfying
\begin{equation}
	\label{eq:z.regularity}
	Z \in L^2\left(\tilde{\Omega}; C\left([0,T], \mathcal D_{s}\right)\right).
\end{equation}
Defining $\bar{u} = \tilde u - Z$, by \eqref{PE-modified-1} we have $\Pb$-a.s. 
\begin{equation*}
	\tfrac{d}{dt} \bar{u}  +  \theta_\rho(\|\tilde u\|_{s-1})\theta_\kappa(\|\partial_z \tilde v - \p_z \tilde v_0\|_{L^\infty})(\tilde u\partial_x \tilde u +\tilde w\partial_z \tilde u + \p_x \tilde p  )  = 0, \qquad \bar{u}(0) = 0.
\end{equation*}
Using \eqref{bound:v-and-vtilde}, \eqref{poincare} and \eqref{ine:controls}, and the fact that $D_{s-1}$ is a Banach algebra gives
\[
	\theta_\rho(\|\tilde u\|_{s-1})\theta_\kappa(\|\partial_z \tilde v - \p_z \tilde v_0\|_{L^\infty})(\tilde u\partial_x \tilde u +\tilde w\partial_z \tilde u) \in L^2\left(\tilde{\Omega}; L^2\left(0,T; \mathcal D_{s-1}\right)\right).
\]
Since $\partial_x \tilde p$ is orthogonal to $D_{s-1}$, we have
$
\frac{d}{dt} \bar{u} \in L^2\left(\tilde{\Omega}; L^2\left(0,T; \mathcal D_{s-1}\right)\right).
$
Combining \eqref{bound:v-and-vtilde} and \eqref{eq:z.regularity}, and using $\bar u = \tilde u -Z$,  we easily obtain
$
 \bar u \in L^2\left(\tilde{\Omega}; L^\infty\left(0,T; \mathcal D_{s}\right)\right).
$
Next, using the standard Aubin-Lions compactness theorem \cite[Corollary 4]{simon1986compact}, for any $s'\in[s-1, s)$ we have 
\[
 \bar u \in L^2\left(\tilde{\Omega}; C\left([0,T], \mathcal D_{s'}\right)\right).
\]
In particular, $\bar u \in L^2\left(\tilde{\Omega}; C\left([0,T], \mathcal D_{s-1}\right)\right).$
This together with \eqref{eq:z.regularity} and the fact that $\kappa\leq \tilde u \leq \frac1\kappa$ imply that $\tilde{u} \in L^2\left( \Omega; C\left(\left[0,T \right], \mathcal D_{s-1,\kappa}\right)\right)$. 
\end{proof}

\begin{remark}
Unlike the viscous system~\eqref{PE-modified-system-approximation} where the continuity in $\mathcal D_{s,\kappa}$ is available, for the inviscid system~\eqref{PE-modified-system} we have a weaker result, i.e., the continuity in $\mathcal D_{s',\kappa}$ for $s'<s.$ Nevertheless, this is enough for our purpose to define a suitable stopping time to construct the local martingale solutions to the original inviscid system \eqref{PE-inviscid-system}, see the next corollary.
\end{remark}

\begin{corollary}
\label{cor:loc.mart.sol}
Suppose that $\mu_0$ satisfies \eqref{condition:mu-zero} with a constant $M>0$. Let $\rho \geq M$, and let $(\tilde{\Sc}, \tilde{W}, \tilde{u})$ be the global martingale solution to the modified system \eqref{PE-modified-system} obtained in Proposition~\ref{prop:global.martingale.existence}. Define the stopping time $\eta$ by
\begin{equation}\label{stopingtime:eta}
	\eta = \inf \left\lbrace t \geq 0: \,\| \tilde{u}(t)\|_{s-1} \geq \frac\rho2 \right\rbrace \wedge  \inf \left\lbrace t \geq 0: \, \|\partial_{zz} \tilde u(t) - \partial_{zz} \tilde u(0) \|_{L^\infty} \geq \frac\kappa2 \right\rbrace,
\end{equation}
Then $\eta>0$ $\tilde{\mathbb{P}}$-a.s., $(\tilde{\Sc}, \tilde{W}, \tilde{u}, \eta)$ is a local martingale solution to system \eqref{PE-inviscid-system}, 
and for any $T>0$,
\begin{equation}\label{regularity-after-st}
	\tilde{u}\left( \cdot \wedge \eta \right) \in L^2 \left(\tilde\Omega; C\left( [0, T], \mathcal D_{s-1,\kappa} \right)  \right), \quad \mathds{1}_{[0, \eta]}(\cdot) \tilde u(\cdot)  \in L^2\left(\tilde \Omega; L^\infty\left( 0, T; \mathcal D_{s,\kappa} \right) \right).
\end{equation}
\end{corollary}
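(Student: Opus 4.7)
The plan is to leverage the temporal continuity of $\tilde u$ in $\mathcal D_{s-1,\kappa}$ established in Proposition \ref{prop:global.martingale.existence}, together with the support condition \eqref{condition:mu-zero} on the initial law and the choice $\rho \geq M$, to show that both cut-off functions appearing in the modified system \eqref{PE-modified-system} are identically $1$ on the stochastic interval $[0,\eta]$; this collapses \eqref{PE-modified-system} to the original inviscid system \eqref{PE-inviscid-system} up to the stopping time $\eta$.

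First I would verify that $\eta$ is a well-defined $\tilde{\mathbb F}$-stopping time with $\eta > 0$ $\tilde{\mathbb P}$-a.s. The regularity $\tilde u \in L^2\left(\tilde\Omega; C([0,T]; \mathcal D_{s-1,\kappa})\right)$ immediately yields continuity of $t \mapsto \|\tilde u(t)\|_{s-1}$. Moreover, since $\partial_z \tilde u \in C([0,T]; H^{s-1})$, we obtain $\partial_{zz}\tilde u \in C([0,T]; H^{s-2})$, and the Sobolev embedding $H^{s-2}(\mathbb T^2) \hookrightarrow L^\infty(\mathbb T^2)$ (valid since $s-2 \geq 4 > 1$) gives continuity of $t \mapsto \|\partial_{zz}\tilde u(t) - \partial_{zz}\tilde u(0)\|_{L^\infty}$. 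Thus $\eta$ is the hitting time of a continuous adapted process, hence a stopping time. Condition \eqref{condition:mu-zero} together with $\rho \geq M$ and the norm equivalence \eqref{ctildekappa} gives $\|\tilde u(0)\|_{s-1} < \rho/2$ $\tilde{\mathbb P}$-a.s., while trivially $\|\partial_{zz}\tilde u(0) - \partial_{zz}\tilde u(0)\|_{L^\infty} = 0 < \kappa/2$; continuity then forces both quantities to stay strictly below $\rho/2$ and $\kappa/2$ respectively on a positive random time interval, so $\eta > 0$ a.s.

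Next, on the stochastic interval $[0,\eta]$ the definition of $\eta$ gives $\|\tilde u(t)\|_{s-1} \leq \rho/2$ and $\|\partial_z \tilde v(t) - \partial_z \tilde v(0)\|_{L^\infty} \leq \kappa/2$, so by \eqref{eqn:rho} both $\theta_\rho(\|\tilde u\|_{s-1})$ and $\theta_\kappa(\|\partial_z \tilde v - \partial_z \tilde v_0\|_{L^\infty})$ equal $1$ there. Evaluating the identity \eqref{equality-before-st} at the time $t\wedge \eta$ therefore reduces it to \eqref{eq:solution.def}, showing that $(\tilde{\mathcal S}, \tilde W, \tilde u, \eta)$ is a local martingale solution to \eqref{PE-inviscid-system} in the sense of Definition \ref{definition:martingale-sol}. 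The regularity \eqref{regularity-after-st} is then inherited from \eqref{eq:martingale.solution.approx.regularity}: stopping preserves continuous paths, giving $\tilde u(\cdot \wedge \eta) \in L^2\left(\tilde\Omega; C([0,T]; \mathcal D_{s-1,\kappa})\right)$, and the bound $\mathds{1}_{[0,\eta]}(\cdot)\tilde u(\cdot) \in L^2\left(\tilde\Omega; L^\infty(0,T; \mathcal D_{s,\kappa})\right)$ follows directly from the $L^\infty_t\mathcal D_{s,\kappa}$ bound in \eqref{eq:martingale.solution.approx.regularity}.

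The main obstacle is the positivity of $\eta$: the inviscid modified system only provides continuity in $\mathcal D_{s',\kappa}$ for $s' < s$ (see the remark after Proposition \ref{prop:global.martingale.existence}), so the $\mathcal D_{s,\kappa}$ norm cannot be monitored continuously in time. The resolution is that the stopping criteria in \eqref{stopingtime:eta} are carefully placed at the subcritical Sobolev level $s-1$ together with the $L^\infty$ norm of the second vertical derivative accessed via the embedding of $H^{s-2}$; both quantities are controlled continuously by $\mathcal D_{s-1,\kappa}$, which is exactly the level of regularity the inviscid compactness procedure delivers.
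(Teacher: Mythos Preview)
Your proof is correct and follows essentially the same approach as the paper: both argue that continuity of $\tilde u$ in $\mathcal D_{s-1,\kappa}$ makes $\eta$ a well-defined positive stopping time, that the cut-offs are inactive on $[0,\eta]$ so the modified system collapses to the original one, and that the regularity is inherited from \eqref{eq:martingale.solution.approx.regularity}. Your treatment is in fact slightly more detailed than the paper's in justifying the continuity of $t\mapsto\|\partial_{zz}\tilde u(t)-\partial_{zz}\tilde u(0)\|_{L^\infty}$ via the Sobolev embedding $H^{s-2}\hookrightarrow L^\infty$.
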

\begin{proof}
First of all, the stopping time is well-defined due to the continuity in time of $\tilde u$ \eqref{eq:martingale.solution.approx.regularity}. Notice that for $t \in[0, \eta]$, the cut-off functions $\theta_\rho$ and $\theta_\kappa$ are not activated in system \eqref{PE-modified-system}, and thus it coincides with the original system \eqref{PE-inviscid-system}. Then with \eqref{equality-before-st}, we have 
\begin{equation}
	\tilde u\left(t \wedge \eta\right) + \int_0^{t \wedge \eta} \big[\tilde u\p_x \tilde u + \tilde w\p_z \tilde u\big] dt' = \tilde u(0) + \int_0^{t \wedge \eta} \sigma(\tilde u) \, d\tilde W \quad \text{in } H,
\end{equation}
which completes the proof of \eqref{eq:solution.def}.
The fact that $\eta>0$ $\tilde{\mathbb{P}}$-a.s. follows from the condition~\eqref{condition:mu-zero} and $\rho \geq M$. 
By continuity in time of $\tilde u$ and the definition of $\eta$, we know on $t\in[0,\eta]$ it holds that $\|\partial_{zz} \tilde u(t) - \partial_{zz} \tilde u(0) \|_{L^\infty} \leq \frac\kappa2$. By the regularity of $\tilde u$ and as $\tilde u(0)\in \mathcal D_{s,2\kappa}$ a.s., we infer that $\kappa\leq\|\p_{zz} \tilde u (t)\|\leq \frac1\kappa$ a.s. for all $(x,z)\in \mathbb D$ on $t\in[0,\eta]$. For $t\in(\eta,T]$, $\tilde u(t\wedge \eta) = \tilde u(\eta)$ and therefore $\kappa\leq\|\p_{zz} \tilde u (t\wedge \eta)\|\leq \frac1\kappa$ for any $t\in[0,T]$, and meaning that $\tilde u(t \wedge\eta)$ satisfies the local Rayleigh condition. Then the regularity \eqref{regularity-after-st} is implied by \eqref{eq:martingale.solution.approx.regularity}.
\end{proof}

\section{Pathwise Uniqueness}\label{sec:uniqueness-inviscid}
In this section, we first establish the following proposition concerning the pathwise uniqueness of the modified system \eqref{PE-modified-system}.
\begin{proposition}
\label{prop:pathwise.uniqueness}
Suppose that $\sigma$ satisfies \eqref{noise-ine}, and
let $\Sc = \left(\Omega, \Fc, \Fb = \Fct, \Pb \right)$ and $W$ be fixed.  
Suppose 
there exist two global martingale solutions $\left(\Sc, W, u^1\right)$ and $\left(\Sc, W, u^2\right)$ to the modified system \eqref{PE-modified-system}. Denote $\Omega_0 = \left\lbrace u^1_0 = u^2_0 \right\rbrace 
\subseteq \Omega$, then
\begin{equation*}
	\Pb \left( \left\lbrace \mathds{1}_{\Omega_0}\left(u^1(t) - u^2(t)\right) = 0 \ \text{for all} \ t\geq 0 \right\rbrace \right) = 1.
\end{equation*}
\end{proposition}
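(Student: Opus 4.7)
The natural approach is to estimate the difference $U := u^1 - u^2$ via It\^o's formula and close via a Gronwall-type argument. However, because the cut-off functions $\theta^i := \theta_\rho(\|u^i\|_{s-1})\theta_\kappa(\|\p_z v^i - \p_z v_0^i\|_{L^\infty})$ are only Lipschitz with respect to $\mathcal D_{s-1}$-type quantities and the non-local term $w\p_z u$ involves a horizontal derivative of $U$ that cannot be controlled by $\|U\|_{L^2}$ alone, a pure $L^2$ estimate on $U$ cannot close. I would therefore work in the stronger norm $\|U\|_{s-1}$ and introduce the sequence of $\Fb$-stopping times
\[
\tau_R := \inf\bigl\{t\geq 0:\,\|u^1(t)\|_{\tilde s} + \|u^2(t)\|_{\tilde s}\geq R\bigr\}, \qquad R\in\N,
\]
which satisfy $\tau_R \to \infty$ almost surely thanks to the $L^\infty$-in-time regularity \eqref{eq:martingale.solution.approx.regularity} of both martingale solutions. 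It then suffices to establish $u^1 = u^2$ on $[0,T\wedge\tau_R]$ restricted to $\Omega_0$ for each fixed $R$ and $T>0$, and let $R\to\infty$.

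Subtracting the two equations, and noting that on $\Omega_0$ the reference initial vorticities inside $\theta_\kappa$ coincide, $U$ satisfies in $H$
\[
dU + \bigl[\theta^1 F(u^1) - \theta^2 F(u^2)\bigr]dt = \bigl[\theta^1 \sigma(u^1) - \theta^2 \sigma(u^2)\bigr]dW,
\]
where $F(u):= u\p_x u + w\p_z u$ and the pressure drops out by orthogonality in $H$. Splitting the drift as $\theta^1 F(u^1) - \theta^2 F(u^2) = \theta^1(F(u^1) - F(u^2)) + (\theta^1 - \theta^2) F(u^2)$ and exploiting the smoothness of the cut-offs together with the Sobolev embedding $H^{s-3}\hookrightarrow W^{2,\infty}$ (valid since $s\geq 6$), one obtains
\[
|\theta^1 - \theta^2| \leq C_{\rho,\kappa}\bigl(\|U\|_{s-1} + \|\p_z V\|_{L^\infty}\bigr) \leq C_{\rho,\kappa}\|U\|_{s-1}, \qquad V := \p_z U.
\]
I would then apply It\^o's formula to $\|U\|^2 + \|V\|_{H^{s-1}}^2$ (equivalent to $\|U\|_{s-1}^2$ by \eqref{ine:controls}), mirroring the a priori bound of Proposition~\ref{proposition:estimate-inviscid} but now for the difference. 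The divergence-free cancellation $\langle u^i \p_x U + w^i \p_z U, U\rangle = 0$ kills the worst terms in the $L^2$ piece; for the vorticity piece, the critical $\p_x^{s-1}$-term is controlled via the Rayleigh-weighted quantity $\|\p_x^{s-1} V/\sqrt{\p_z v^1}\|^2$. The bound $\kappa\leq \p_z v^i \leq 1/\kappa$ on $[0,\tau_R]$, enforced by the cut-off $\theta_\kappa$ and the initial assumption in $\mathcal D_{s,2\kappa}$, combined with the essential cancellation \eqref{eqn:cancellation} applied to the pair $(u^1, w^1)$, eliminates the problematic highest-derivative contribution involving $\p_x^{s-1}(w^1 - w^2)$. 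All lower-order commutators, together with the remainder $(\theta^1 - \theta^2) F(u^2)$, are absorbed into $C_R\|U\|_{s-1}^2$ by Lemma~\ref{lemma:nonlinear-ine} and the stopping-time bound $\|u^i\|_{\tilde s}\leq R$; the stochastic terms are handled by the Burkholder-Davis-Gundy inequality and the Lipschitz estimates in \eqref{noise-ine}.

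Putting these estimates together and taking expectations over $\Omega_0$, I expect to obtain
\[
\Eb\sup_{t\leq T\wedge \tau_R} \mathds{1}_{\Omega_0}\|U(t)\|_{s-1}^2 \leq C_{R,\rho,\kappa}\,\Eb\int_0^{T\wedge\tau_R}\mathds{1}_{\Omega_0}\|U(t')\|_{s-1}^2\,dt'.
\]
Since $\mathds{1}_{\Omega_0}\,U(0)=0$, Gronwall forces $\mathds{1}_{\Omega_0}\,U\equiv 0$ on $[0,T\wedge\tau_R]$, and sending $R\to\infty$ (then $T\to\infty$) completes the proof. The main technical obstacle will be the top-order estimate on $\p_x^{s-1}V$: even though the Rayleigh bound is in force on $[0,\tau_R]$, the cut-off difference $\theta^1-\theta^2$ does not in general vanish, so the $\p_x^{s-1}$-derivatives falling on $\theta^1 w^1\p_z v^1 - \theta^2 w^2\p_z v^2$ must be decomposed so that the cancellation \eqref{eqn:cancellation} still kills the $\p_x^{s-1}(w^1 - w^2)$ contribution, with the surviving commutator remainders absorbed into $\|U\|_{s-1}^2$ via the stopping-time norms.
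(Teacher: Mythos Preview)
Your plan is essentially the same as the paper's: subtract, work in a norm equivalent to $\|U\|_{s-1}$ with the top-order horizontal piece replaced by the Rayleigh-weighted quantity $\|\p_x^{s-1}V/\sqrt{\p_z v^1}\|$, use the Lipschitz bound on $\theta^1-\theta^2$, exploit the cancellation that kills the $\p_x^{s-1}W$ contribution, and close by Gronwall under a stopping time. Two small points to tidy up.

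First, a phrasing issue: the cancellation you invoke is not ``applied to the pair $(u^1,w^1)$'' but to the \emph{difference} pair $(V,W)=(\p_z U,-\int_0^z\p_x U)$. Concretely, in the weighted estimate the dangerous term is $\langle \p_x^{s-1}W\cdot \p_z v^1,\ \p_x^{s-1}V/\p_z v^1\rangle=\langle \p_x^{s-1}W,\p_x^{s-1}V\rangle=0$; the role of $v^1$ is only to supply the weight that lets this factor of $\p_z v^1$ cancel. You correctly identify in your last paragraph that it is $\p_x^{s-1}(w^1-w^2)$ that gets killed, so just align the earlier phrasing with that.

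Second, the stopping time. Your $\tau_R=\inf\{t:\|u^1(t)\|_{\tilde s}+\|u^2(t)\|_{\tilde s}\geq R\}$ is delicate because the martingale solutions are only $L^\infty$ in time in $\mathcal D_{s,\kappa}$ (continuity is available only in $\mathcal D_{s-1,\kappa}$), so $t\mapsto\|u^i(t)\|_{\tilde s}$ need not be continuous and the hitting time may not be a bona fide stopping time. The paper avoids this by using the integral-based stopping time
\[
\eta^n=\inf\Bigl\{t\geq 0:\int_0^t\bigl(1+\|u^1\|_{\tilde s}^2+\|u^2\|_{\tilde s}^2\bigr)\,dt'\geq n\Bigr\},
\]
which is automatically continuous in $t$, and then closes via the stochastic Gronwall lemma rather than ordinary Gronwall with a fixed constant $C_R$. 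Either fix works once you make $\tau_R$ well-defined, but the integral version is cleaner here.
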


\begin{proof}
Let $u = u^1 - u^2$, $v=v^1-v^2 = \p_z u^1 - \p_z u^2$, and $\bar{u} = \mathds{1}_{\Omega_0} u$, $\bar{v} = \mathds{1}_{\Omega_0} v$. Denote by 
$$
\|v\|^2_{\hat H^{s-1}} := \sum\limits_{\substack{|\alpha|\leq s-1,\\ D^\alpha \neq \p_x^{s-1}}} \|D^\alpha v\|^2 + \lnorm \frac{\p_x^{s-1} v}{\sqrt{\p_z v^1}} \rnorm^2, \quad \|u\|^2_{\widehat{s-1}} := \|u\|^2 + \|v\|^2_{\hat H^{s-1}}.
$$
From Proposition \ref{proposition:estimate-inviscid}~(ii) and Proposition \ref{prop:global.martingale.existence}, we know that $\kappa\leq \partial_z v^1\leq\frac1{\kappa}$ and $\kappa\leq\partial_z v^2\leq\frac1{\kappa}$ for all $t\geq 0$ a.s.. Then we have the following bound:
\begin{equation}
    \begin{split}
        \|u\|_{s-1}^2 \leq (1+\lnorm\sqrt{\p_z v^1}\rnorm_{L^\infty}^2) \|u\|_{\widehat{s-1}}^2 \leq C_{\kappa} \|u\|_{\widehat{s-1}}^2.
    \end{split}
\end{equation}
Let $\eta^n$ be the stopping time defined by
\begin{equation}
    \begin{split}
        \eta^n = \inf \Big\lbrace t \geq 0 : \int_0^t 
    \left(1+\|u^1\|_{\tilde s}^2+ \|u^2\|_{\tilde s}^2\right)\, dt' \geq n \Big\rbrace.
    \end{split}
\end{equation}
Since $u_0 \in L^p(\Omega; \mathcal D_{s,2\kappa})$ with $p\geq 4$, from the estimates in Proposition~\ref{proposition:estimate-inviscid} we deduce that $\lim\limits_{n\to \infty}\eta^n \to \infty$ $\Pb$-a.s.\ and therefore it suffices to show that 
\[
	\Eb \sup_{t' \in \left[0, \eta^n \wedge t \right]} \| u(t') \|_{\widehat{s-1}}^2  = 0,
\]
for all $t\geq 0$ and $n \in \Nb$. 

Subtracting the equation of $u^2$ from the one of $u^1$, and the one of $v^2$ from the one of $v^1$, produces
\begin{equation*}
    \begin{split}
       &d u + \Big[\theta_\rho(\|u^1\|_{s-1})\theta_\kappa(\|\partial_z v^1 - \p_z v^1_0\|_{L^\infty})(u^1\partial_x u^1 + w^1\partial_z u^1 + \p_x p^1  )
       \\
       &\qquad \qquad - \theta_\rho(\|u^2\|_{s-1})\theta_\kappa(\|\partial_z v^2 - \p_z v^2_0\|_{L^\infty})(u^2\partial_x u^2 + w^2\partial_z u^2 + \p_x p^2  )\Big] dt \nonumber
    \\
    &  =\Big[\theta_\rho(\|u^1\|_{s-1})\theta_\kappa(\|\partial_z v^1 - \p_z v^1_0\|_{L^\infty})\sigma (u^1) 
- \theta_\rho(\|u^2\|_{s-1})\theta_\kappa(\|\partial_z v^2 - \p_z v^2_0\|_{L^\infty})\sigma (u^2) \Big] dW , 
	   \\
	   &u(0) = u^1(0) - u^2(0),
    \end{split}
\end{equation*}
and
\begin{equation*}
    \begin{split}
       &d v + \Big[\theta_\rho(\|u^1\|_{s-1})\theta_\kappa(\|\partial_z v^1 - \p_z v^1_0\|_{L^\infty})(u^1\partial_x v_n^1 + w^1\partial_z v^1   )
       \\
       &\qquad \qquad - \theta_\rho(\|u^2\|_{s-1})\theta_\kappa(\|\partial_z v^2 - \p_z v^2_0\|_{L^\infty})(u^2\partial_x v^2 + w^2\partial_z v^2   )\Big] dt \nonumber
    \\
    &  =\Big[\theta_\rho(\|u^1\|_{s-1})\theta_\kappa(\|\partial_z v^1 - \p_z v^1_0\|_{L^\infty})\p_z\sigma (u^1) 
    - \theta_\rho(\|u^2\|_{s-1})\theta_\kappa(\|\partial_z v^2 - \p_z v^2_0\|_{L^\infty})\p_z\sigma (u^2) \Big] dW , 
	   \\
	   &v(0) = v^1(0) - v^2(0).
    \end{split}
\end{equation*}

\medskip

\noindent\underline{\textbf{Estimate of $\|\bar u\|$:}} By It\^o's formula, one has 
\begin{equation*}
    \begin{split}
        d \|u\|^2 
    = &-2 \Big\langle\theta_\rho(\|u^1\|_{s-1})\theta_\kappa(\|\partial_z v^1 - \p_z v^1_0\|_{L^\infty})(u^1\partial_x u^1 + w^1\partial_z u^1 + \p_x p^1  )
       \\
       &\qquad - \theta_\rho(\|u^2\|_{s-1})\theta_\kappa(\|\partial_z v^2 - \p_z v^2_0\|_{L^\infty})(u^2\partial_x u^2 + w^2\partial_z u^2 + \p_x p^2  ), u\Big\rangle dt
   \\
   & + \Big\|\theta_\rho(\|u^1\|_{s-1})\theta_\kappa(\|\partial_z v^1 - \p_z v^1_0\|_{L^\infty})\sigma (u^1) 
    \\
    &\qquad - \theta_\rho(\|u^2\|_{s-1})\theta_\kappa(\|\partial_z v^2 - \p_z v^2_0\|_{L^\infty})\sigma (u^2) \Big\|^2_{L_2(\mathscr U, L^2)} dt
    \\
   & + 2\Big\langle \theta_\rho(\|u^1\|_{s-1})\theta_\kappa(\|\partial_z v^1 - \p_z v^1_0\|_{L^\infty})\sigma (u^1) 
    \\
    &\qquad - \theta_\rho(\|u^2\|_{s-1})\theta_\kappa(\|\partial_z v^2 - \p_z v^2_0\|_{L^\infty})\sigma (u^2), u \Big\rangle dW
    \\
    =: & I_1 dt + I_2 dt + I_3 dW.
    \end{split}
\end{equation*}
Fix $n \in \Nb$ and let $\eta_a$, $\eta_b$ be stopping times such that $0 \leq \eta_a \leq \eta_b \leq \eta^n$. Integrating in time and taking supremums, multiplying by $\mathds{1}_{\Omega_0}$ and taking the expected value lead to
\begin{equation*}
    \begin{split}
        \Eb  &\sup_{t \in \left[\eta_a, \eta_b\right]} \| \bar u(t) \|^2  \leq  \Eb \| \bar{u}(\eta_a) \|^2 + \Eb\int_{\eta_a}^{\eta_b} \mathds{1}_{\Omega_0} (|I_1| + |I_2|) dt + \Eb \sup_{t \in \left[\eta_a, \eta_b\right]} \left|\int_{\eta_a}^t \mathds{1}_{\Omega_0}I_3 dW\right|.
    \end{split}
\end{equation*}
Using the triangle inequality, the Cauchy-Schwarz inequality, the Sobolev inequality, the H\"older inequality, and Young's inequality, thanks to the Lipschitz property of $\theta_\rho$ and $\theta_\kappa$, by integration by parts, we have
    \begin{equation*}
    \begin{split}
      \Eb\int_{\eta_a}^{\eta_b} \mathds{1}_{\Omega_0}|I_1| dt \leq &C_{\rho,\kappa} \Eb\int_{\eta_a}^{\eta_b}  \|
      \bar u\|_{s-1}\|u^1\|^2_{\tilde s} \|\bar u\| dt 
      + C \Eb\int_{\eta_a}^{\eta_b}\|
      \bar u\|_{s-1}^2 (\|u^1\|_{\tilde{s}}+ \|u^2\|_{\tilde{s}}) dt
      \\
      \leq & C_{\rho,\kappa} \Eb\int_{\eta_a}^{\eta_b} \|
      \bar u\|_{\widehat{s-1}}^2 (1+\|u^1\|^2_{\tilde{s}}+ \|u^2\|^2_{\tilde{s}}) dt.
    \end{split}
    \end{equation*}
For the term $I_2$, thanks to Assumption~\eqref{noise-ine}, by a similar calculation we have
    \begin{equation*}
        \begin{split}
            \Eb\int_{\eta_a}^{\eta_b} \mathds{1}_{\Omega_0}|I_2| dt \leq C_{\rho,\kappa}\Eb\int_{\eta_a}^{\eta_b}\|
      \bar u\|_{\widehat{s-1}}^2
      (1+\|u^1\|^2_{\tilde{s}}+ \|u^2\|^2_{\tilde{s}})  dt.
        \end{split}
    \end{equation*}
Regarding the term $I_3$, using the Burkholder-Davis-Gundy inequality and Young's inequality,
    \begin{equation*}
        \begin{split}
            &\Eb \sup_{t \in \left[\eta_a, \eta_b\right]} \left|\int_{\eta_a}^t \mathds{1}_{\Omega_0} I_3 dW\right| 
            \\
            \leq &C_{\rho,\kappa} \Eb \left( \int_{\eta_a}^{\eta_b} \|
      \bar u\|_{\widehat{s-1}}^2(1+\|u^1\|^2_{\tilde{s}}+ \|u^2\|^2_{\tilde{s}}) \| \bar{u} \|^2  dt \right)^{1/2}
      \\
      \leq & \frac1{2} \Eb \sup_{t \in \left[\eta_a, \eta_b\right]} \| \bar{u}(t) \|^2 + C_{\rho,\kappa} \Eb \int_{\eta_a}^{\eta_b} \|
      \bar u\|_{\widehat{s-1}}^2(1+\|u^1\|^2_{\tilde{s}}+ \|u^2\|^2_{\tilde{s}}) dt.
        \end{split}
    \end{equation*}
Therefore, one gets
\begin{equation}\label{uni-l2}
    \begin{split}
        \Eb  \sup_{t \in \left[\eta_a, \eta_b\right]} \| \bar u(t) \|^2  \leq  2\Eb \| \bar{u}(\eta_a) \|^2 +
       C_{\rho,\kappa} \Eb \int_{\eta_a}^{\eta_b} \|
      \bar u\|_{\widehat{s-1}}^2(1+\|u^1\|^2_{\tilde{s}}+ \|u^2\|^2_{\tilde{s}}) dt.
    \end{split}
\end{equation}

\medskip

\noindent\underline{\textbf{Estimates of $\|D^\alpha \bar v\|$ for $D^\alpha \neq \p_x^{s-1}$:}} For $0\leq |\alpha|\leq s-1$ and $D^\alpha \neq \p_x^{s-1}$, by It\^o's formula, 
\begin{equation*}
    \begin{split}
        d \|D^\alpha v\|^2 
    = &-2 \Big\langle\theta_\rho(\|u^1\|_{s-1})\theta_\kappa(\|\partial_z v^1 - \p_z v^1_0\|_{L^\infty})D^\alpha(u^1\partial_x v^1 + w^1\partial_z v^1 )  
       \\
       &\qquad - \theta_\rho(\|u^2\|_{s-1})\theta_\kappa(\|\partial_z v^2 - \p_z v^2_0\|_{L^\infty})D^\alpha(u^2\partial_x v^2 + w^2\partial_z v^2 ), D^\alpha v\Big\rangle dt
   \\
   & + \Big\|\theta_\rho(\|u^1\|_{s-1})\theta_\kappa(\|\partial_z v^1 - \p_z v^1_0\|_{L^\infty})D^\alpha \p_z \sigma (u^1) 
    \\
    &\qquad - \theta_\rho(\|u^2\|_{s-1})\theta_\kappa(\|\partial_z v^2 - \p_z v^2_0\|_{L^\infty})D^\alpha \p_z \sigma (u^2) \Big\|^2_{L_2(\mathscr U, L^2)} dt
    \\
   & + 2\Big\langle \theta_\rho(\|u^1\|_{s-1})\theta_\kappa(\|\partial_z v^1 - \p_z v^1_0\|_{L^\infty})D^\alpha \p_z\sigma (u^1) 
    \\
    &\qquad - \theta_\rho(\|u^2\|_{s-1})\theta_\kappa(\|\partial_z v^2 - \p_z v^2_0\|_{L^\infty})D^\alpha \p_z\sigma (u^2), D^\alpha v \Big\rangle dW
    \\
    =: & I_1 dt + I_2 dt + I_3 dW.
    \end{split}
\end{equation*}
Fix $n \in \Nb$ and let $\eta_a$, $\eta_b$ be stopping times such that $0 \leq \eta_a \leq \eta_b \leq \eta^n$. Integrating in time over $[\eta_a,\eta_b]$ and taking the supremum, multiplying by $\mathds{1}_{\Omega_0}$ and taking the expected value lead to
\begin{equation*}
    \begin{split}
        \Eb  &\sup_{t \in \left[\eta_a, \eta_b\right]} \| D^\alpha\bar  v(t) \|^2  \leq  \Eb \| D^\alpha \bar{v}(\eta_a) \|^2 + \Eb\int_{\eta_a}^{\eta_b} \mathds{1}_{\Omega_0} (|I_1| + |I_2|) dt + \Eb \sup_{t \in \left[\eta_a, \eta_b\right]} \left|\int_{\eta_a}^t \mathds{1}_{\Omega_0}I_3 dW\right|.
    \end{split}
\end{equation*}
Using the triangle inequality, the Cauchy-Schwarz inequality, the Sobolev inequality, the H\"older inequality, and Young's inequality, thanks to the Lipschitz property of $\theta_\rho$ and $\theta_\kappa$, by integration by parts, we have
    \begin{equation}\label{unique-noncritical-1}
    \begin{split}
      \Eb\int_{\eta_a}^{\eta_b} \mathds{1}_{\Omega_0}|I_1| dt \leq &C_{\rho,\kappa} \Eb\int_{\eta_a}^{\eta_b}  \|
      \bar u\|_{s-1}\|u^1\|^2_{\tilde s} \|v\|_{\hat H^{s-1}} dt +
      C \Eb\int_{\eta_a}^{\eta_b}\|
      \bar u\|_{s-1}^2 (\|u^1\|_{\tilde{s}}+ \|u^2\|_{\tilde{s}}) dt
      \\
      \leq & C_{\rho,\kappa} \Eb\int_{\eta_a}^{\eta_b} \|
      \bar u\|_{\widehat{s-1}}^2 (1+\|u^1\|^2_{\tilde{s}}+ \|u^2\|^2_{\tilde{s}}) dt.
    \end{split}
    \end{equation}
For the term $I_2$, thanks to Assumption~\eqref{noise-ine}, by a similar calculation we have
    \begin{equation}\label{unique-noncritical-2}
        \begin{split}
            \Eb\int_{\eta_a}^{\eta_b} \mathds{1}_{\Omega_0}|I_2| dt \leq  C_{\rho,\kappa} \Eb\int_{\eta_a}^{\eta_b} \|
      \bar u\|_{\widehat{s-1}}^2 (1+\|u^1\|^2_{\tilde{s}}+ \|u^2\|^2_{\tilde{s}}) dt.
        \end{split}
    \end{equation}
Regarding the term $I_3$, using the Burkholder-Davis-Gundy inequality yields
    \begin{equation}\label{unique-noncritical-3}
        \begin{split}
            &\Eb \sup_{t \in \left[\eta_a, \eta_b\right]} \left|\int_{\eta_a}^t \mathds{1}_{\Omega_0} I_3 dW\right| 
            \\
            \leq &C_{\rho,\kappa} \Eb \left( \int_{\eta_a}^{\eta_b} \|
      \bar u\|_{\widehat{s-1}}^2  (1+\|u^1\|^2_{\tilde{s}}+ \|u^2\|^2_{\tilde{s}}) \| D^\alpha \bar v \|^2  dt \right)^{1/2}
      \\
      \leq & \frac1{2} \Eb \sup_{t \in \left[\eta_a, \eta_b\right]} \| D^\alpha \bar{v}(t) \|^2 + C_{\rho,\kappa} \Eb \int_{\eta_a}^{\eta_b} \|
      \bar u\|_{\widehat{s-1}}^2 (1+\|u^1\|^2_{\tilde{s}}+ \|u^2\|^2_{\tilde{s}}) dt.
        \end{split}
    \end{equation}
Therefore, one gets
\begin{equation}\label{uni-higher}
    \begin{split}
        \Eb  \sup_{t \in \left[\eta_a, \eta_b\right]} \| D^\alpha \bar v(t) \|^2  
        \leq  2\Eb \| D^\alpha\bar{v}(\eta_a) \|^2+
       C_{\rho,\kappa} \Eb \int_{\eta_a}^{\eta_b} \|
      \bar u\|_{\widehat{s-1}}^2(1+\|u^1\|^2_{\tilde{s}}+ \|u^2\|^2_{\tilde{s}}) dt.
    \end{split}
\end{equation}

\medskip

\noindent\underline{\textbf{Estimate of $\displaystyle \lnorm\frac{\p_x^{s-1} \bar v}{\sqrt{\p_z v^1}}\rnorm$:}} When $D^\alpha = \partial_x^{s-1}$, in order to get the equation of $\displaystyle d\lnorm\frac{\p_x^{s-1} v}{\sqrt{\p_z v^1}}\rnorm^2$, we first find
\begin{equation*}
\begin{split}
    d \partial_x^{s-1} v =
    &-\Big[\theta_\rho(\|u^1\|_{s-1})\theta_\kappa(\|\partial_z v^1 - \p_z v^1_0\|_{L^\infty})\p_x^{s-1}(u^1\partial_x v^1 + w^1\partial_z v^1   )
       \\
       &\qquad \qquad - \theta_\rho(\|u^2\|_{s-1})\theta_\kappa(\|\partial_z v^2 - \p_z v^2_0\|_{L^\infty})\p_x^{s-1}(u^2\partial_x v^2 + w^2\partial_z v^2   )\Big] dt
    \\
    & +\Big[\theta_\rho(\|u^1\|_{s-1})\theta_\kappa(\|\partial_z v^1 - \p_z v^1_0\|_{L^\infty})\p_x^{s-1}\p_z\sigma (u^1) 
    \\
    &\qquad - \theta_\rho(\|u^2\|_{s-1})\theta_\kappa(\|\partial_z v^2 - \p_z v^2_0\|_{L^\infty})\p_x^{s-1}\p_z\sigma (u^2) \Big] dW
    \\
    =& -\left(\theta_\rho(\|u^1\|_{s-1}) -\theta_\rho(\|u^2\|_{s-1}) \right)\theta_\kappa(\|\partial_z v^1 - \p_z v^1_0\|_{L^\infty})\p_x^{s-1} (u^1\partial_x v^1 + w^1\partial_z v^1   ) dt
    \\
    &-\theta_\rho(\|u^2\|_{s-1})\left(\theta_\kappa(\|\partial_z v^1 - \p_z v^1_0\|_{L^\infty}) - \theta_\kappa (\|\partial_z v^2 - \p_z v^2_0\|_{L^\infty}) \right)\p_x^{s-1}(u^1\partial_x v^1 + w^1\partial_z v^1   ) dt
    \\
    &-\theta_\rho(\|u^2\|_{s-1})\theta_\kappa(\|\partial_z v^2 - \p_z v^2_0\|_{L^\infty}) \p_x^{s-1}\left(u\p_x v^1+ u^2 \p_x v + w \p_z v^1 + w^2 \p_z v\right) dt
    \\
    &+\left(\theta_\rho(\|u^1\|_{s-1}) -\theta_\rho(\|u^2\|_{s-1}) \right)\theta_\kappa(\|\partial_z v^1 - \p_z v^1_0\|_{L^\infty}) \partial_x^{s-1}\partial_z \sigma (u^1) dW
    \\
    &+ \theta_\rho(\|u^2\|_{s-1})\left(\theta_\kappa(\|\partial_z v^1 - \p_z v^1_0\|_{L^\infty}) - \theta_\kappa(\|\partial_z v^2 - \p_z v^2_0\|_{L^\infty}) \right) \partial_x^{s-1}\partial_z \sigma (u^1) dW
    \\
    &+ \theta_\rho(\|u^2\|_{s-1})\theta_\kappa(\|\partial_z v^2 - \p_z v^2_0\|_{L^\infty}) \left(\partial_x^{s-1}\partial_z \sigma (u^1) - \partial_x^{s-1}\partial_z \sigma (u^2) \right)dW
    \\
    =: &( A_1 + A_2  + A_3) dt + (A_4+A_5+A_6)dW,
\end{split}
\end{equation*}
and
\begin{equation*}
\begin{split}
      &d \p_z v^1 = -\theta_\rho(\|u^1\|_{s-1})\theta_\kappa(\|\partial_z v^1 - \p_z v^1_0\|_{L^\infty})(\p_z u^1 \p_x v^1 + u^1 \p_{xz} v^1 + \p_z w^1 \p_z v^1 + w^1\p_{zz} v^1)dt
    \\
     &+\theta_\rho(\|u^1\|_{s-1})\theta_\kappa(\|\partial_z v^1 - \p_z v^1_0\|_{L^\infty})\partial_{zz} \sigma (u^1) dW=: B_1 dt + B_2 dW.
\end{split}
\end{equation*}
By It\^o's formula, we have 
\begin{equation*}
    \begin{split}
        &d\lnorm\frac{\p_x^{s-1} v}{\sqrt{\p_z v^1}}\rnorm^2 = \left(\left\langle 2(A_1+A_2+A_3), \frac{\p_x^{s-1} v}{\p_z v^1} \right\rangle - \left\langle B_1,\frac{|\p_x^{s-1} v|^2}{|\p_z v^1|^2} \right\rangle\right)dt
        \\
        &+\frac12\left(\left\langle 2(A_4+A_5+A_6)^2,\frac1{\p_z v^1} \right\rangle - \left\langle 4\frac{\p_x^{s-1}v}{|\p_z v^1|^2} ,(A_4+A_5+A_6)B_2 \right\rangle +  \left\langle 2\frac{|\p_x^{s-1}v|^2}{|\p_z v^1|^3} ,B_2^2 \right\rangle \right) dt
        \\
        &+ \left(\left\langle 2\frac{\p_x^{s-1}v}{\p_z v^1}, (A_4+A_5+A_6) \right\rangle - \left\langle \frac{|\p_x^{s-1}v|^2}{|\p_z v^1|^2}, B_2 \right\rangle  \right)dW.
    \end{split}
\end{equation*}
Fix $n \in \Nb$ and let $\eta_a$, $\eta_b$ be stopping times such that $0 \leq \eta_a \leq \eta_b \leq \eta^n$. Integrating in time over $[\tau_a, \tau_b]$ and taking the supremum, multiplying by $\mathds{1}_{\Omega_0}$ and taking the expected value lead to
\begin{equation*}
    \begin{split}
        \Eb  &\sup_{t \in \left[\eta_a, \eta_b\right]} \lnorm\frac{\p_x^{s-1} \bar v}{\sqrt{\p_z v^1}}(t)\rnorm^2  \leq  \Eb \lnorm \frac{\p_x^{s-1} \bar v}{\sqrt{\p_z v^1}}(\eta_a) \rnorm^2 + \Eb\int_{\eta_a}^{\eta_b} \mathds{1}_{\Omega_0} \left|\left\langle 2(A_1+A_2+A_3), \frac{\p_x^{s-1} v}{\p_z v^1} \right\rangle - \left\langle B_1,\frac{|\p_x^{s-1} v|^2}{|\p_z v^1|^2} \right\rangle\right| dt 
        \\
        &+ \Eb\int_{\eta_a}^{\eta_b} \mathds{1}_{\Omega_0} \frac12\left|\left\langle 2(A_4+A_5+A_6)^2,\frac1{\p_z v^1} \right\rangle - \left\langle 4\frac{\p_x^{s-1}v}{|\p_z v^1|^2} ,(A_4+A_5+A_6)B_2 \right\rangle +  \left\langle 2\frac{|\p_x^{s-1}v|^2}{|\p_z v^1|^3} ,B_2^2 \right\rangle \right| dt
        \\
        &+ \Eb \sup_{t \in \left[\eta_a, \eta_b\right]} \left|\int_{\eta_a}^t \mathds{1}_{\Omega_0} \left(\left\langle 2\frac{\p_x^{s-1}v}{\p_z v^1}, (A_4+A_5+A_6) \right\rangle - \left\langle \frac{|\p_x^{s-1}v|^2}{|\p_z v^1|^2}, B_2 \right\rangle  \right) dW\right|.
    \end{split}
\end{equation*}
Using the triangle inequality, the Cauchy-Schwarz inequality, the Sobolev inequality, the H\"older inequality, Young's inequality, and Assumption~\eqref{noise-ine}, thanks to the Lipschitz property of $\theta_\rho$ and $\theta_\kappa$,  by integration by parts and \eqref{PE-inviscid-3}, we have
\begin{itemize}
    \item For $\left\langle A_1+A_2, \frac{\p_x^{s-1} v}{\p_z v^1} \right\rangle$,
    \begin{equation*}
    \begin{split}
        \left|\left\langle A_1+A_2, \frac{\p_x^{s-1} v}{\p_z v^1} \right\rangle\right| &\leq C_{\rho,\kappa} \|u\|_{s-1}\|u^1\|^2_{s}\lnorm\frac{\p_x^{s-1} v}{\p_z v^1}\rnorm
        \\
        &\leq C_{\rho,\kappa} \left(1+\lnorm \sqrt{\p_z v^1}\rnorm_{L^\infty}^2 + \frac{1}{\lnorm \sqrt{\p_z v^1}\rnorm_{L^\infty}^2}\right)\|u^1\|_{\tilde s}^2 \|u\|_{\widehat{s-1}}^2
        \\
        &\leq C_{\rho,\kappa}\|u^1\|_{\tilde s}^2 \|u\|_{\widehat{s-1}}^2.
    \end{split}
\end{equation*}

\item For $\left\langle A_3, \frac{\p_x^{s-1} v}{\p_z v^1} \right\rangle$, 
\begin{equation*}
    \begin{split}
      &\left\langle \p_x^{s-1}\left(u\p_x v^1  + w \p_z v^1 \right), \frac{\p_x^{s-1} v}{\p_z v^1} \right\rangle 
      \\
      =&\left\langle \p_x^{s-1}\left(u\p_x v^1\right) + \p_x^{s-1} w \p_z v^1 + \sum\limits_{k=0}^{s-2} \binom{s}{k} \partial_x^{s-k-1} \p_z v^1 \partial_x^{k} w, \frac{\p_x^{s-1} v}{\p_z v^1} \right\rangle
      \\
      =&\left\langle \p_x^{s-1}\left(u\p_x v^1\right)+ \sum\limits_{k=0}^{s-2} \binom{s}{k} \partial_x^{s-k-1} \p_z v^1 \partial_x^{k} w, \frac{\p_x^{s-1} v}{\p_z v^1} \right\rangle
      \\
      \leq &C_{\rho,\kappa}\|u^1\|_{\tilde s} \|u\|_{\widehat{s-1}}^2,
    \end{split}
\end{equation*}
where we have used the cancellation $\langle \p_x^{s-1} w \p_z v^1 , \frac{\p_x^{s-1} v}{\p_z v^1}\rangle=0$. Next, by integration by parts,
\begin{equation*}
    \begin{split}
      &\left\langle \p_x^{s-1}\left(u^2\p_x v + w^2 \p_z v \right), \frac{\p_x^{s-1} v}{\p_z v^1} \right\rangle 
      \\
      =& \int \frac{1}{2}\frac{|\p_x^{s-1} v|^2}{|\p_z v^1|^2}(u^2\p_{xz} v^1 + w^2\p_{zz} v^1) + \frac{\p_x^{s-1} v}{\p_z v^1} \sum\limits_{k=0}^{s-2} \binom{s}{k} (\partial_x^{s-k-1} u^2 \partial_x^{k+1} v + \partial_x^{s-k-1} w^2 \partial_x^{k} \p_z v )dxdz 
      \\
      \leq & C_{\rho,\kappa}
    \|u\|_{\widehat{s-1}}^2(\|u^1\|_{\tilde s}^2+\|u^2\|_{\tilde s}^2).
    \end{split}
\end{equation*}
Therefore, we have
\begin{equation*}
    \begin{split}
        \left\langle A_3, \frac{\p_x^{s-1} v}{\p_z v^1} \right\rangle \leq &C_{\rho,\kappa}\|u\|_{\widehat{s-1}}^2(1+\|u^1\|_{\tilde s}^2+\|u^2\|_{\tilde s}^2).
    \end{split}
\end{equation*}

\item For $\left\langle B_1,\frac{|\p_x^{s-1} v|^2}{|\p_z v^1|^2} \right\rangle$,
\begin{equation*}
\begin{split}
   \left|\left\langle B_1,\frac{|\p_x^{s-1} v|^2}{|\p_z v^1|^2} \right\rangle\right| \leq  C_{\rho,\kappa}\|u^1\|_{\tilde s}^2 \|u\|_{\widehat{s-1}}^2.
\end{split}
\end{equation*}

\item For the rest of $dt$ terms, we have
\begin{equation*}
    \begin{split}
        &\left|\mathds{1}_{\Omega_0}\frac12\left(\left\langle 2(A_4+A_5+A_6)^2,\frac1{\p_z v^1} \right\rangle - \left\langle 4\frac{\p_x^{s-1}v}{|\p_z v^1|^2} ,(A_4+A_5+A_6)B_2 \right\rangle +  \left\langle 2\frac{|\p_x^{s-1}v|^2}{|\p_z v^1|^3} ,B_2^2 \right\rangle \right)\right|
        \\
        \leq & C_{\rho,\kappa}\|u\|^2_{\widehat{s-1}}(1+\|u^1\|^2_{\tilde s}).
    \end{split}
\end{equation*}

\item For the $dW$ terms, by the Burkholder-Davis-Gundy inequality,
    \begin{equation*}
        \begin{split}
            &\Eb \sup_{t \in \left[\eta_a, \eta_b\right]} \left|\int_{\eta_a}^t \mathds{1}_{\Omega_0} \left(\left\langle 2\frac{\p_x^{s-1}v}{\p_z v^1}, (A_4+A_5+A_6) \right\rangle - \left\langle \frac{|\p_x^{s-1}v|^2}{|\p_z v^1|^2}, B_2 \right\rangle  \right) dW\right|
            \\
            \leq &C_{\rho,\kappa} \Eb \left( \int_{\eta_a}^{\eta_b}\|
      \bar u\|^2_{\widehat{s-1}} \left(1+\lnorm \sqrt{\p_z v^1}\rnorm_{L^\infty}^2 + \frac{1}{\lnorm \sqrt{\p_z v^1}\rnorm_{L^\infty}^2}\right) (1+\|u^1\|_{\tilde s}^2) \lnorm\frac{\p_x^{s-1}
      \bar v}{\sqrt{\p_z v^1}}\rnorm^2  dt \right)^{1/2}
      \\
      \leq & \frac1{2} \Eb \sup_{t \in \left[\eta_a, \eta_b\right]}  \lnorm\frac{\p_x^{s-1}\bar v}{\sqrt{\p_z v^1}}(t)\rnorm^2 + C_{\rho,\kappa} \Eb \int_{\eta_a}^{\eta_b} \|
      \bar u\|^2_{\widehat{s-1}} \left(1+\|u^1\|_{\tilde s}^2\right) dt.
        \end{split}
    \end{equation*}
\end{itemize}
Combining all the estimates above brings that 
\begin{equation}\label{uni-critical}
    \begin{split}
       \Eb  \sup_{t \in \left[\eta_a, \eta_b\right]} \lnorm\frac{\p_x^{s-1} \bar v}{\sqrt{\p_z v^1}}(t)\rnorm^2  \leq  2\Eb \lnorm \frac{\p_x^{s-1} \bar v}{\sqrt{\p_z v^1}}(\eta_a) \rnorm^2 
       + C_{\rho,\kappa}\Eb \int_{\eta_a}^{\eta_b}(1+\|u^1\|_{\tilde s}^2+ \|u^2\|_{\tilde s}^2)\|\bar u\|^2_{\widehat{s-1}} dt .
    \end{split}
\end{equation}

\medskip

\noindent\underline{\textbf{Combining the estimates:}} Based on \eqref{uni-l2}, \eqref{uni-higher} and \eqref{uni-critical}, we have
\begin{equation}
    \begin{split}
        \Eb  \sup_{t \in \left[\eta_a, \eta_b\right]} \lnorm\bar u(t)\rnorm^2_{\widehat{s-1}} \leq  C\Eb  \lnorm\bar u(\eta_a)\rnorm^2_{\widehat{s-1}}
        + C_{\rho,\kappa}\Eb \int_{\eta_a}^{\eta_b}(1+\|u^1\|_{\tilde s}^2+ \|u^2\|_{\tilde s}^2)\|\bar u\|^2_{\widehat{s-1}} dt.
    \end{split}
\end{equation}
As $0\leq\eta_a\leq \eta_b\leq \eta^n$ are arbitrary, then the result follows from the stochastic Gr\"onwall Lemma (see \cite[Lemma 5.3]{glatt2009strong}). 
\end{proof}

\section{Proof of Theorem \ref{thm:main-1}}\label{sec:proof-thm}

Now we are ready to prove the first main result, Theorem \ref{thm:main-1}, which concerns the existence of local maximal pathwise solutions to system \eqref{PE-inviscid-system}. The ultimate goal is to provide pathwise solutions with $L^2(\Omega; \mathcal D_{s, 2\kappa})$ initial data, and the proof will be split into three subsections.

\subsection{Step 1: Local solution with weaker regularity and bounded initial data}\label{sec:step1}

We start by assuming that 
\begin{equation}\label{ic-requirement}
    \Pb\left(u_0\in \mathcal D_{s,2\kappa}: \|u_0\|_{\tilde s} < \frac M2 \right) = 1,
\end{equation}
 i.e., $\|u_0\|_{\tilde s} < \frac M2$ a.s..
Following from \cite[Section 5.2]{debussche2011local} and the Gy\"ongy-Krylov theorem \cite{gyongy1996existence}, together with Proposition \ref{prop:pathwise.uniqueness} and the proof of convergence in Proposition \ref{prop:global.martingale.existence}, one can obtain a unique local pathwise solution $(u,\eta)$ to the original system \eqref{PE-inviscid-system} satisfying the weaker regularity:
\begin{equation}\label{path-regularity-old}
	u\left( \cdot \wedge \eta \right) \in L^2 \left(\Omega; C\left( [0, T], \mathcal D_{s-1,\kappa} \right)  \right), \quad \mathds{1}_{[0, \eta]}(\cdot) u(\cdot)  \in L^2\left( \Omega; L^\infty\left( 0, T; \mathcal D_{s,\kappa} \right) \right).
\end{equation}
Here $\eta$ is defined in \eqref{stopingtime:eta} and $\rho \geq M+4$. Note that $\rho\geq M+4$ can guarantee that $\eta >0 $ $\mathbb P$-a.s.. 

The solution $(u,\eta)$ obtained above only satisfies \eqref{path-regularity-old}. Therefore, we do not yet refer to \((u,\eta)\) as a local pathwise solution in the sense of Definition~\ref{def:pathwise.sol}, since the continuity of \(u\) is only known in \(\mathcal D_{s-1,\kappa}\), rather than in \(\mathcal D_{s,\kappa}\). This regularity is not sufficient for the localization argument later in Section~\ref{sec:proof-localization},  to extend the existence result to $L^2$ initial data. To overcome this issue, we show in the next subsection that the solution is indeed continuous in time in the space $\mathcal D_{s,\kappa}$, \[u\left( \cdot \wedge \tau \right) \in L^2 \left(\Omega; C\left( [0, T], \mathcal \mathcal D_{s,\kappa} \right)  \right),  \text{ for some stopping time } \tau. \]

\subsection{Step 2: Improved regularity of the solution}\label{sec:step2}

The key result is an abstract Cauchy lemma (Lemma~\ref{lemma:cauchy}). Note that although \cite[Lemma~5.1]{glatt2009strong} and \cite[Lemma~7.1]{glatt2014local} have had similar results, the norms used in our lemma are quite different and thus their proofs do not apply directly. As a result, we shall provide complete proof with full details below.

\subsubsection{The smoothing operator $P_n$}
We will consider Fourier truncation in the periodic $x$ variable and extension–mollification–restriction in the vertical $z$ variable. The treatment in the $z$ variable is inspired and similar to that in \cite{glatt2014local}. To be more specific, let $S_n$ be the Fourier truncation in the periodic $x$ variable,
\[
(S_n f)(x,z):=\sum_{|k|\le n}\widehat f_k(z)e^{2\pi i kx},
\qquad
\widehat f_k(z):=\int_{\mathbb T} f(x,z)e^{-2\pi i kx}\,dx.
\]
For $m\in \mathbb N$, let
\[
E:H^m(0,1)\to H^m(\mathbb R), \qquad
R:H^m(\mathbb R)\to H^m(0,1)
\]
be a bounded extension operator and the restriction operator, and let $\widetilde F_{\varepsilon}$ denote a standard mollifier
on $\mathbb R$, acting in the $z$ variable only. Define, for $f=f(x,z)$,
\[
(E_z f)(x,\cdot):=E(f(x,\cdot)),
\qquad
(R_z g)(x,\cdot):=R(g(x,\cdot)),
\]
and
\[
(I f)(x,z):=f(x,z)-\int_0^1 f(x,\zeta)\,d\zeta.
\]
Finally define
\begin{align}\label{Pn}
   J_n:=I\,R_z\,\widetilde F_{1/n}\,E_z,
\qquad
P_n:=S_nJ_n=J_nS_n. 
\end{align}
Here $S_n$ and $J_n$ commute since $S_n$ acts only on $x$ while $J_n$ acts only on $z$. We have the following properties of the smoothing operator $P_n$, which will be used in the rest of this section.

\begin{lemma}
For every integer $m\ge 0$, there exists a constant
$C_m>0$, independent of $n$, such that for every $f\in H^m\cap H$,
\begin{align}
\|P_n f\|_{H^m} &\leq C_m \|f\|_{H^m}, \label{eq:PnHs}\\
\|P_n f\|_{H^{m+1}} &\leq C_m n \|f\|_{H^m}, \label{eq:PnHs1}\\
\|P_n f-f\|_{H^{m-1}} &\leq C_m \frac1n\|f\|_{H^m}, \label{eq:PndiffHsminus1} \quad \text{if }  m\geq 1,\\
\lim_{n\to\infty}\|P_n f-f\|_{H^m} &= 0, \label{eq:PnconvHs}\\
\lim_{n\to\infty}n\|P_n f-f\|_{H^{m-1}} &= 0 \quad \text{if }  m\geq 1. \label{eq:PnconvHsminus1}
\end{align}
\end{lemma}

\begin{proof}

We first note that $I$ is a bounded operator on $H^m$ for every integer $m\ge 0$.
Therefore $J_n$ and thus $P_n$ map $H$ into $H$.

We now review the properties of $J_n$. Since $(0,1)$ is a smooth bounded interval, and noting that $I$ is a bounded operator,
the standard extension--mollification--restriction construction on $(0,1)$ has the
same properties as the smoothing operator in \cite[Appendix A]{glatt2014local}: it is uniformly bounded
on Sobolev spaces (A.2), gains one derivative with factor $n$ (A.3), tail estimate (A.4), converges strongly (A.5), and
satisfies the stronger $n$-weighted error estimate (A.6). Applying these one-dimensional
bounds fiberwise in $x$ to $\partial_x^a f(x,\cdot)$, we obtain for all integers
$a,b\ge 0$:
\begin{align}
\|\partial_x^a J_n f\|_{L_x^2 H_z^b}
&\le C_b \|\partial_x^a f\|_{L_x^2 H_z^b}, \label{eq:Jn1}\\
\|\partial_x^a J_n f\|_{L_x^2 H_z^{b+1}}
&\le C_b n \|\partial_x^a f\|_{L_x^2 H_z^b}, \label{eq:Jn2}\\
\|\partial_x^a (J_n f-f)\|_{L_x^2 H_z^{b}}
&\le C_b \frac1n \|\partial_x^a f\|_{L_x^2 H_z^{b+1}}, \label{eq:Jn3}\\
\lim_{n\to \infty}\|\partial_x^a(J_n f-f)\|_{L_x^2 H_z^b}
&= 0 \quad\text{whenever }\partial_x^a f\in L_x^2 H_z^{b}, \label{eq:Jn4}\\
\lim_{n\to \infty}n\|\partial_x^a(J_n f-f)\|_{L_x^2 H_z^b}
&= 0
\quad\text{whenever }\partial_x^a f\in L_x^2 H_z^{b+1}. \label{eq:Jn5}
\end{align}
Here
\[
\|u\|_{L_x^2 H_z^b}^2:=\int_{\mathbb T}\|u(x,\cdot)\|_{H^b(0,1)}^2\,dx.
\]

Next, we recall the standard properties of the Fourier cutoff $S_n$ in the periodic
variable $x$:
\begin{align}
\|S_n g\|_{H^m} &\le \|g\|_{H^m}, \label{eq:Sn1}\\
\lim_{n\to \infty}\|S_n g-g\|_{H^m} &= 0. \label{eq:Sn3}
\end{align}
Moreover, for every integer $m\ge 1$,
\begin{align}
\|S_ng - g\|_{H^{m-1}} &\leq C_m \frac1n \|g\|_{H^m}, \label{eq:Sn2}\\
\lim_{n\to \infty}n\|S_n g-g\|_{H^{m-1}} &= 0, \label{eq:Sn4}
\end{align}
for all $g\in H^m$. To prove \eqref{eq:Sn2} and \eqref{eq:Sn4}, it is enough to use the
equivalent mixed-derivative norm on $H^{m-1}$: for each pair of integers
$a,b\ge 0$ with $a+b\le m-1$,
\[
n^2\|\partial_x^a\partial_z^b(S_n g-g)\|_{L^2}^2
=
\sum_{|k|>n} n^2 (2\pi k)^{2a}\|\partial_z^b \widehat g_k\|_{L^2(0,1)}^2.
\]
Since $|k|>n$ implies $n^2\le k^2$, we obtain
\[
n^2 (2\pi k)^{2a}\le (2\pi k)^{2(a+1)},
\]
hence
\[
n^2\|\partial_x^a\partial_z^b(S_n g-g)\|_{L^2}^2
\le
\sum_{|k|>n}(2\pi k)^{2(a+1)}\|\partial_z^b \widehat g_k\|_{L^2(0,1)}^2.
\]
Because $(a+1)+b\le m$, the right-hand side is bounded by the tail of the
$H^m$ norm of $g$, and therefore tends to $0$ as $n\to\infty$, which gives \eqref{eq:Sn4}. One can also directly bound the right-side by $C\|g\|_{H^m}^2$, which gives \eqref{eq:Sn2}.

We now prove \eqref{eq:PnHs}--\eqref{eq:PnconvHsminus1}.

For \eqref{eq:PnHs}, using the equivalent mixed-derivative norm on $H^m$,
for every $a,b\ge 0$ with $a+b\le m$ we have
\[
\|\partial_x^a\partial_z^b P_n f\|_{L^2}
=
\|S_n(\partial_x^a\partial_z^b J_n f)\|_{L^2}
\le
\|\partial_x^a\partial_z^b J_n f\|_{L^2}.
\]
Here we have used the fact that $S_n$ commutes with $\partial_x^a$ and $\partial_z^b$.
Summing over $a+b\le m$ and using \eqref{eq:Jn1}, we obtain
\[
\|P_n f\|_{H^m}\le C_m \|f\|_{H^m}.
\]

For \eqref{eq:PnHs1}, let $a,b\ge 0$ with $a+b\le m+1$.
If $a\ge 1$, then
$
\partial_x^a\partial_z^b P_n f
=
\partial_x S_n(\partial_x^{a-1}\partial_z^b J_n f).
$
Since $S_n$ contains only frequencies $|k|\le n$,
\[
\|\partial_x^a\partial_z^b P_n f\|_{L^2}
\le
n \|\partial_x^{a-1}\partial_z^b J_n f\|_{L^2}
\le
C_m n \|f\|_{H^m},
\]
where we used \eqref{eq:Jn1} and the fact that $(a-1)+b\le m$.
If $a=0$, then $b\le m+1$. When $b\geq 1$, \eqref{eq:Jn2} gives
\[
\|\partial_z^b P_n f\|_{L^2}
\le
\|\partial_z^b J_n f\|_{L^2}
\le
C_m n \|f\|_{H^m}, 
\]
while the case $b=0$ is trivial since \eqref{eq:PnHs} implies
\[
\|P_n f\|_{L^2}\le C\|f\|_{L^2}\le C n \|f\|_{L^2},
\qquad n\ge 1.
\]
Summing over all $a+b\le m+1$ proves \eqref{eq:PnHs1}.

For \eqref{eq:PnconvHs}, we write
\[
P_n f-f
=
S_n(J_n f-f)+(S_n f-f).
\]
Hence, by \eqref{eq:Sn1},
\[
\|P_n f-f\|_{H^m}
\le
\|J_n f-f\|_{H^m}+\|S_n f-f\|_{H^m}.
\]
The first term tends to $0$ by \eqref{eq:Jn4}, and the second by \eqref{eq:Sn3},
which proves \eqref{eq:PnconvHs}.

For \eqref{eq:PnconvHsminus1}, the same decomposition yields
\begin{equation}\label{eq:tail-est}
    n\|P_n f-f\|_{H^{m-1}}
\le
n\|J_n f-f\|_{H^{m-1}}
+
n\|S_n f-f\|_{H^{m-1}}.
\end{equation}
The second term tends to $0$ by \eqref{eq:Sn4}. For the first term, if
$a+b\le m-1$, then $a+(b+1)\le m$, so $\partial_x^a f\in L_x^2H_z^{b+1}$.
Therefore \eqref{eq:Jn5} applies to each mixed derivative of order at most $m-1$,
and summing over all such derivatives gives
\[
n\|J_n f-f\|_{H^{m-1}}\to 0.
\]

Finally, for \eqref{eq:PndiffHsminus1}, both terms in the right-side of \eqref{eq:tail-est} are bounded by $C_m \|f\|_{H^m}$ due to \eqref{eq:Jn3} and \eqref{eq:Sn2}, respectively. This proves \eqref{eq:PndiffHsminus1} and concludes the proof.
\end{proof}

\subsubsection{An abstract Cauchy lemma}
We first define several notations that will be used in the lemma. 

Let $u_0$ satisfies \eqref{ic-requirement}, and let $u_0^j = P_j u_0$ where $P_j$ are smoothing operators defined in \eqref{Pn}. By the definition of $P_j$, one has
\[
\|u_0^j\|_{m}\leq C_m\|u_0\|_{m} \quad \text{for any } 0\leq m\leq s. 
\]
Then by \eqref{eq:PndiffHsminus1} and the Sobolev inequality, since $s\geq 6$ we have
\begin{align*}
    \|\partial_{zz} u_0^j - \p_{zz} u_0\|_{L^\infty} \leq C\|u_0^j- u_0\|_{H^4} = C\|(I-P_j) u_0\|_{H^4}  \leq \frac{C}{j} \|u_0\|_{5} \leq \frac{C_{\kappa}}{j} \|u_0\|_{\tilde s} \leq \frac{C_{\kappa}M}{2j}
\end{align*}
for some $C_\kappa$ depends on $\kappa$.
Therefore, one can choose $j$ large enough such that $\frac{C_\kappa M}{2j} \leq \frac12\kappa$ (for instance, $j\geq \frac{C_\kappa M}\kappa$), and consequently $\|\partial_{zz} u_0^j - \p_{zz} u_0\|_{L^\infty} \leq \frac12\kappa$ which gives $\frac32\kappa\leq\partial_{zz} u_0^j\leq \frac2{3\kappa}$. 

Denote by
$$ s' = s+1.$$
Then, for $j\geq \lceil \frac{C_\kappa M}\kappa \rceil=:J$, using \eqref{eq:PnHs1} yields
\[
u_0^j \in \mathcal D_{s',\frac32\kappa}, 
\quad  \quad\|u_0^j\|_{\tilde s'} \leq C_{\kappa,j,s} \|u_0\|_{\tilde s} \leq C_{\kappa,j,s} M \text{ a.s.},
\]
for some constant $C_{\kappa,j,s}$ depending only on $\kappa$, $j$, and $s$.

Now, with higher regularity $u_0^j \in \mathcal D_{s',\frac32\kappa}$, we consider the modified system \eqref{PE-modified-system} with cutoff function $\theta_\rho(\|u\|_{s'-1})$ and initial data $u_0^j$.
Let $(u^j, \eta^j)$ be the corresponding local pathwise solutions of the original system~\eqref{PE-inviscid-system} obtained from Section~\ref{sec:step1} satisfying \eqref{path-regularity-old}, where $\eta^j$ is defined as 
\begin{equation}\label{stopingtime:eta-j}
	\eta^j = \inf \left\lbrace t \geq 0: \,\| u^j(t)\|_{s'-1}\equiv\| u^j(t)\|_{s} \geq \frac\rho2 \right\rbrace \wedge  \inf \left\lbrace t \geq 0: \, \|\partial_{zz}  u^j(t) - \partial_{zz}  u^j_0 \|_{L^\infty} \geq \frac\kappa4 \right\rbrace.
\end{equation}
Note that \eqref{stopingtime:eta-j} differ from \eqref{stopingtime:eta} in the sense that it corresponds to the cut-off functions $\theta_\rho(\|u^j\|_{s})$ and $\theta_{\frac\kappa2}(\|\partial_{zz}  u^j - \partial_{zz}  u^j_0\|_{L^\infty})$ in the modified system \eqref{PE-modified-system} for $u^j$. Here we require
\begin{equation}\label{cauchy-rho}
    \rho \geq (1+\tilde{C}_\kappa)(\frac{C_s M}{\tilde{c}_\kappa} + 4 )(1+\frac1{\tilde{c}_\kappa}),
\end{equation}
where $\tilde{C}_\kappa$ and $\tilde{c}_\kappa$ appear in \eqref{ctildekappa}. Since 
\[
\|u_0^j\|_{s} \leq C_s\|u_0\|_s \leq \frac{C_s}{\tilde{c}_\kappa} \|u_0\|_{\tilde s}\leq \frac{MC_s}{2\tilde{c}_\kappa},
\]
such requirement can guarantee that $\frac\rho2 \geq \|u_0^j\|_{s}+2$ and therefore $\eta^j>0$ a.s.. Recall that $s'=s+1$, thus 
\begin{equation}\label{uj-regularity}
	u^j\left( \cdot \wedge \eta^j \right) \in L^2 \left(\Omega; C\left( [0, T], \mathcal D_{s,\kappa} \right)  \right), \quad \mathds{1}_{[0, \eta^j]}(\cdot) u^j(\cdot)  \in L^2\left( \Omega; L^\infty\left( 0, T; \mathcal D_{s+1,\kappa} \right) \right).
\end{equation}

Then, let us fix any $T>0$ and for $j\geq J$ define the sequence of stopping times
\begin{equation}\label{stopping-time-taujT}
    \tau_j^T := \inf \left\lbrace t\geq 0: \|u^j(t)\|_{\tilde s} \geq 2 + \|u_0^j\|_{\tilde s} \right\rbrace \wedge \inf \left\lbrace t \geq 0: \, \|\partial_{zz}  u^j(t) - \partial_{zz}  u^j_0 \|_{L^\infty} \geq \frac\kappa4 \right\rbrace \wedge T,
\end{equation}
and for $j,k\geq J$ let
\begin{equation}\label{taujk}
    \tau_{j,k}^T = \tau_j^T \wedge \tau_k^T.
\end{equation}
For any time $t\geq\eta^j$, since $\rho \geq (1+\tilde{C_\kappa})(\frac{C_sM}{\tilde{c_\kappa}} + 4)(1+\frac1{\tilde{c}_\kappa})$, we know that
\begin{align*}
    \|u^j(t)\|_{\tilde s} &\geq \tilde{c_\kappa}\|u^j\|_s \geq \tilde{c_\kappa}\frac\rho2\geq \frac12 (\frac{\tilde{C_\kappa}C_sM}{\tilde{c_\kappa}} + 4) \geq 2 + \frac{\tilde{C_\kappa}C_s}{\tilde{c_\kappa}} \|u_0\|_{\tilde s} 
    \\
    &\geq 2 + \tilde{C_\kappa}C_s\|u_0\|_{ s} \geq 2 + \tilde{C_\kappa}\|u^j_0\|_{ s} \geq 2+ \|u^j_0\|_{\tilde s},
\end{align*}
thus $t\geq \tau_j^T$. This implies that $\tau_j^T \leq \eta^j$. Next, we state the following lemma.
\begin{lemma}[Abstract Cauchy Lemma]\label{lemma:cauchy}
For $T>0$ and $\tau_{j,k}^T$ defined in \eqref{taujk}, suppose that we have
  \begin{equation}\label{cauchy-condition-1}
      \lim_{k \rightarrow \infty} \sup_{j \geq k} \mathbb{E} \sup_{t \in\left[0, \tau_{j, k}^T\right]}\left\|u^j(t)-u^k(t)\right\|_{s}=0
  \end{equation}
and
\begin{equation}\label{cauchy-condition-2}
    \lim_{S \rightarrow 0} \sup_{j \geq J} \mathbb{P}\left[\sup_{t \in\left[0, \tau_j^T \wedge S\right]}\left\|u^j(t)\right\|_{\tilde s}>\|u_0^j\|_{\tilde s}+1\right]=0.
\end{equation}
Then there exists a stopping time $\tau$ with 
\begin{equation}\label{cauchy-result-1}
    \mathbb P(0<\tau\leq T)=1,
\end{equation}
and a predictable process $ u(\cdot \wedge \tau) \in C([0,T], \mathcal D_{s,\kappa})$ such that
\begin{equation}\label{cauchy-result-2}
    \sup_{t \in[0, \tau]}\left\|u^{j_l}-u\right\|_{s} \rightarrow 0 \quad \text { a.s. }
\end{equation}
for some subsequence $j_l\to \infty$. Moreover, 
\begin{equation}\label{cauchy-result-3}
    \sup\limits_{t\in[0,\tau]} \|u\|_{\tilde s}  \leq C_{\kappa,s}(1+ \|u_0\|_{\tilde s}) \quad \text { a.s.. }
\end{equation}
\end{lemma}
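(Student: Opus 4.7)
The plan is to combine the $L^1$-Cauchy condition \eqref{cauchy-condition-1} with the uniform no-instant-blowup condition \eqref{cauchy-condition-2} to produce a positive stopping time $\tau$ and a limit process in $C([0,\tau], \mathcal D_{s,\kappa})$. I will proceed in three steps: extract a subsequence Cauchy on random intervals, upgrade the convergence to a deterministic positive interval, and finally define $\tau$ and identify the limit.

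\emph{Step 1: Cauchy subsequence on random intervals.} By \eqref{cauchy-condition-1} and Chebyshev's inequality, I pass to a subsequence $\{j_l\}$ such that
$$\mathbb{P}\Big(\sup_{t \in [0, \tau^T_{j_{l+1}, j_l}]} \|u^{j_{l+1}}(t) - u^{j_l}(t)\|_{s} > 2^{-l}\Big) \le 2^{-l}.$$
Borel--Cantelli yields a full-measure event $\Omega^\dagger$ on which $\{u^{j_l}(\omega)\}$ is Cauchy in $C([0, \tau^T_{j_{l+1}, j_l}(\omega)], \mathcal D_s)$ for all sufficiently large $l$.

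\emph{Step 2: Transfer to a deterministic positive interval.} Splitting according to whether $\tau^T_{j_{l+1}, j_l} \ge \delta$, and using $\{\tau_{j_{l+1}, j_l}^T < \delta\} \subseteq \{\tau_{j_{l+1}}^T < \delta\} \cup \{\tau_{j_l}^T < \delta\}$, I obtain
$$\mathbb{P}\Big(\sup_{[0, \delta]} \|u^{j_{l+1}} - u^{j_l}\|_{s} > r\Big) \le \mathbb{P}(\tau_{j_{l+1}}^T < \delta) + \mathbb{P}(\tau_{j_l}^T < \delta) + r^{-1}\, \mathbb{E}\Big[\sup_{[0, \tau^T_{j_{l+1}, j_l}]} \|u^{j_{l+1}} - u^{j_l}\|_{s}\Big].$$
The first two terms are made uniformly small in $l$ by choosing $\delta$ small via \eqref{cauchy-condition-2}, supplemented with an auxiliary It\^o-based estimate for $\|\partial_{zz} u^j - \partial_{zz} u_0^j\|_{L^\infty}$ (needed because \eqref{cauchy-condition-2} controls only the $\tilde s$-norm constraint in \eqref{stopping-time-taujT}, not the $L^\infty$-deviation constraint); the last term vanishes as $l \to \infty$ by construction. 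This yields Cauchy-in-probability in $C([0, \delta], \mathcal D_s)$, and a further diagonal subsequence (still denoted $\{j_l\}$) produces almost sure convergence to a limit $u \in C([0, \delta], \mathcal D_s)$.

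\emph{Step 3: Defining $\tau$ and inheriting the Rayleigh bound.} Set
$$\tau := \inf\{t \in [0, \delta] : \|u(t)\|_{\tilde s} > \|u_0\|_{\tilde s} + 1\} \wedge T.$$
Since the pointwise bounds $\kappa \le \partial_z v^{j_l} \le 1/\kappa$ pass to the uniform limit (using $\mathcal D_s \hookrightarrow W^{2,\infty}$ for $s \ge 6$), the equivalence \eqref{equiv-beta} ensures that $\|u(0)\|_{\tilde s} = \|u_0\|_{\tilde s}$ and that $t \mapsto \|u(t)\|_{\tilde s}$ is continuous at $t=0$; hence $\tau > 0$ almost surely. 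The preserved Rayleigh bound upgrades the convergence in $\mathcal D_s$ to convergence in $\mathcal D_{s,\kappa}$ on $[0, \tau]$, delivering $u(\cdot \wedge \tau) \in C([0, T], \mathcal D_{s, \kappa})$ and \eqref{cauchy-result-2}, while \eqref{cauchy-result-3} follows from $\|u\|_{\tilde s} \le \|u_0\|_{\tilde s} + 1 \le C_\kappa(1 + \|u_0\|_{\tilde s})$ on $[0, \tau]$.

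\emph{Main obstacle.} The technical heart is the auxiliary estimate driving Step~2: since the second constraint in \eqref{stopping-time-taujT} is not governed by \eqref{cauchy-condition-2}, to achieve $\sup_j \mathbb{P}(\tau_j^T < \delta) \to 0$ as $\delta \to 0$ one must complement \eqref{cauchy-condition-2} with an It\^o analysis of the SPDE for $u^j$, controlling $\mathbb{E}\sup_{[0, \tau_j^T \wedge \delta]}\|\partial_{zz} u^j - \partial_{zz} u_0^j\|_{L^\infty}^2$ via the nonlinear bounds of Section~\ref{section:uniform-estimate}, the noise assumption \eqref{noise-ine}, and the Sobolev embedding valid for $s \ge 6$.
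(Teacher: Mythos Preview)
Your Step 2 has a genuine gap. For fixed $\delta>0$ and any $r>0$, your splitting gives
\[
\limsup_{l\to\infty}\mathbb{P}\Big(\sup_{t\in[0,\delta]}\|u^{j_{l+1}}(t)-u^{j_l}(t)\|_s>r\Big)\leq 2\sup_{j\geq J}\mathbb{P}(\tau_j^T<\delta),
\]
and the right-hand side is \emph{not} zero for any fixed $\delta>0$: condition \eqref{cauchy-condition-2} (even after you supplement it with the auxiliary $L^\infty$-estimate) only drives it to zero as $\delta\downarrow 0$, uniformly in $j$. Hence $\{u^{j_l}\}$ is not Cauchy in probability in $C([0,\delta],\mathcal D_s)$ for any fixed $\delta$, and no diagonal extraction can manufacture a positive deterministic interval on which convergence holds. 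The hypotheses control the approximants only on the \emph{random} intervals $[0,\tau_{j,k}^T]$, and without structural information linking these intervals you cannot bootstrap to a deterministic one. Your Step 3 then compounds the difficulty by defining $\tau$ in terms of the limit $u$, which presupposes that $u$ already exists on a positive interval.

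The paper resolves this by constructing $\tau$ \emph{from the approximants, before passing to the limit}. After extracting a fast-Cauchy subsequence (as in your Step 1, but with very specific constants), one introduces modified stopping times
\[
\tilde\tau_l:=\inf\{t:\|u^{j_l}(t)\|_{\tilde s}\geq \|u_0^{j_l}\|_{\tilde s}+1+2^{-l+1}\}\wedge\inf\{t:\|\partial_{zz}u^{j_l}(t)-\partial_{zz}u_0^{j_l}\|_{L^\infty}\geq\tfrac\kappa4\}\wedge T
\]
and proves that $\tilde\tau_{l+1}\leq\tilde\tau_l$ eventually on a full-measure set. This monotonicity is the technical heart of the argument: because $\|\cdot\|_{\tilde s}$ carries $\partial_{zz}u^{j_l}$ in a denominator, comparing $\|u^{j_l}\|_{\tilde s}$ with $\|u^{j_{l+1}}\|_{\tilde s}$ requires passing through an auxiliary norm $\|\cdot\|_{s,j_{l+1}}$ (with $\partial_{zz}u^{j_{l+1}}$ in the denominator) and quantifying the discrepancy as $O(2^{-2l})$. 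Once monotonicity is in hand, $\tau:=\lim_l\tilde\tau_l$ automatically satisfies $\tau\leq\tilde\tau_l\wedge\tilde\tau_{l+1}\leq\tau_{j_l,j_{l+1}}^T$ for large $l$, so the Step-1 Cauchy estimates transfer directly to $[0,\tau]$ and produce the limit $u$ there. Condition \eqref{cauchy-condition-2} enters only at the very end, to show $\mathbb{P}(\tau=0)=0$ via the inclusion $\{\tau<\varepsilon\}\subseteq\bigcup_l\bigcap_{k\geq l}\{\tilde\tau_k<\varepsilon\}$; no separate analysis of the $L^\infty$-constraint is needed.
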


\begin{proof}
   We first construct the convergent subsequence by induction. Starting with $l=1$ and $j_1 = J$, suppose that $j_l$ has been identified, then thanks to \eqref{cauchy-condition-1} we know there exists $j_{l+1}$ such that
   \begin{align}\label{lemma:cauchy-proof-1}
       \sup\limits_{t\in[0,\tau_{j_l,j_{l+1}}^T]} \|u^{j_l}(t)-u^{j_{l+1}}(t)\|_{s} <  \frac{\tilde{c}_\kappa}{C C_s\tilde{C}_{\kappa}(M+4)}\kappa2^{-2l},
   \end{align}
   where the constant $C$ appears above is the Sobolev inequality constant such that 
   $\|f\|_{W^{2,\infty}}\leq C\|f\|_s,$ the constant $C_s$ comes from \eqref{eq:PnHs}, and the constants $\tilde{c}_\kappa$ and $\tilde{C}_\kappa$ are the ones appearing in \eqref{ctildekappa} such that 
   $\|u^{j_l}_0\|_{\tilde s} \leq \frac{\tilde{C}_\kappa C_s}{\tilde{c}_\kappa} \|u_0\|_{\tilde s}$.
   Recall that for any $u^{j_l}$ and $u^{j_{l+1}}$, they satisfy the local Rayleigh condition on $t\in[0,\tau_{j_l,j_{l+1}}^T]$: 
   \begin{align}\label{rayleigh-cauchypart}
        \kappa \leq \p_{zz} u^{j_l} \leq \frac1\kappa, \quad  \kappa \leq \p_{zz} u^{j_{l+1}} \leq \frac1\kappa.
   \end{align}
   By the Sobolev inequality and \eqref{lemma:cauchy-proof-1}, we have
   \begin{align}\label{lemma:cauchy-proof-2}
        \sup\limits_{t\in[0,\tau_{j_l,j_{l+1}}^T]} \|\p_{zz} u^{j_l}(t) - \p_{zz} u^{j_{l+1}}(t)\|_{L^\infty}  < \frac{\tilde{c}_\kappa}{ \tilde{C}_{\kappa}C_s(M+4)}\kappa2^{-2l}.
   \end{align}
   We define $\|u^{j_l}\|_{s,j_{l+1}}$ as:
   \begin{align}\label{norm-sj}
       \|u^{j_l}\|^2_{s,j_{l+1}} := \|u^{j_l}\|^2 + \sum\limits_{\substack{0\leq |\alpha| \leq s\\ D^\alpha\neq \p_x^s}} \|D^\alpha u^{j_l}\|^2 +\lnorm \frac{\p_x^s u^{j_l}}{\sqrt{\p_{zz} u^{j_{l+1}}}} \rnorm^2,
   \end{align}
   and $\|u^{j_{l+1}}\|_{s,j_{l}}$ is defined analogously. Thanks to \eqref{rayleigh-cauchypart}, we have the following equivalence between $\|\cdot\|_{s}$ and $\|\cdot\|_{s,j_{l+1}}$:
   \begin{align}\label{equivalence-sj}
       c_\kappa \|\cdot\|_{s}\leq \|\cdot\|_{s,j_{l+1}} \leq C_\kappa \|\cdot\|_{s}.
   \end{align}
   Next, we calculate that
   \begin{align*}
       &\lnorm \frac{\p_x^s u^{j_l}}{\sqrt{\p_{zz} u^{j_{l+1}}}} \rnorm^2 =  \int \frac{|\p_x^s u^{j_l}|^2 }{\p_{zz} u^{j_{l}} + (\p_{zz} u^{j_{l+1}} -\p_{zz} u^{j_{l}})} dxdz
       \\
       =& \int \frac{|\p_x^s u^{j_l}|^2 }{\p_{zz} u^{j_{l}}} \frac{\p_{zz} u^{j_{l}}}{(\p_{zz} u^{j_{l}}+\p_{zz} u^{j_{l+1}} -\p_{zz} u^{j_{l}})} dxdz = \int \frac{|\p_x^s u^{j_l}|^2 }{\p_{zz} u^{j_{l}}} \frac{1}{1+ \frac{\p_{zz} u^{j_{l+1}} -\p_{zz} u^{j_{l}}}{\p_{zz} u^{j_{l}}}} dxdz.
   \end{align*}
   For $t\in[0,\tau_{j_l,j_{l+1}}^T]$, using equations~ \eqref{rayleigh-cauchypart} and \eqref{lemma:cauchy-proof-2} gives
   \begin{align*}
       \frac{1}{1+ \frac{\p_{zz} u^{j_{l+1}} -\p_{zz} u^{j_{l}}}{\p_{zz} u^{j_{l}}}} \leq \frac{1}{1- \frac{\frac{\tilde{c}_\kappa \kappa}{ \tilde{C}_{\kappa}(M+4)}}{\kappa} 2^{-2l}} \leq \frac1{1-\frac{\tilde{c}_\kappa}{ \tilde{C}_{\kappa}(M+4)}2^{-2l}} \leq 1+\frac{\tilde{c}_\kappa}{ \tilde{C}_{\kappa}(M+4)}2^{-2l+1},
   \end{align*}
   and
   \begin{align*}
        \frac{1}{1+ \frac{\p_{zz} u^{j_{l+1}} -\p_{zz} u^{j_{l}}}{\p_{zz} u^{j_{l}}}} \geq \frac{1}{1+ \frac{\frac{\tilde{c}_\kappa \kappa}{ \tilde{C}_{\kappa}(M+4)}}{\kappa} 2^{-2l}}\geq \frac1{1+\frac{\tilde{c}_\kappa}{ \tilde{C}_{\kappa}(M+4)}2^{-2l}} \geq 1-\frac{\tilde{c}_\kappa}{ \tilde{C}_{\kappa}(M+4)} 2^{-2l},
   \end{align*}
   where we have used the inequalities $\frac1{1-x}\leq 1+2x$ when $0<x\leq \frac12$, and $\frac1{1-x}\geq 1+x$ for $0<x<1.$
   Therefore, for $t\in[0,\tau_{j_l,j_{l+1}}^T]$, one can use $(1+x)^{\frac12}<1+\frac12x$ to obtain
   \begin{align*}
      &(1-\frac{\tilde{c}_\kappa}{ \tilde{C}_{\kappa}C_s(M+4)}2^{-2l})\|u^{j_l}(t)\|_{\tilde s} \leq (1-\frac{\tilde{c}_\kappa}{ \tilde{C}_{\kappa}C_s(M+4)}2^{-2l})^{\frac12}\|u^{j_l}(t)\|_{\tilde s} \leq \|u^{j_l}(t)\|_{s,j_{l+1}}
      \\
      &\leq (1+ \frac{\tilde{c}_\kappa}{ \tilde{C}_{\kappa}C_s(M+4)}2^{-2l+1})^{\frac12}\|u^{j_l}(t)\|_{\tilde s} \leq  (1+ \frac{\tilde{c}_\kappa}{ \tilde{C}_{\kappa}C_s(M+4)}2^{-2l}) \|u^{j_l}(t)\|_{\tilde s}.
   \end{align*}
   Moreover, recall that on $t\in[0,\tau_{j_l,j_{l+1}}^T]$ one has $\|u^{j_l}(t)\|_{\tilde s}\leq 2+ \|u_0^{j_l}\|_{\tilde s} \leq 2+\frac{\tilde{C}_\kappa C_s}{ \tilde{c}_{\kappa}}\frac M2$, consequently,
   \begin{align}\label{lemma:cauchy-proof-3}
      \|u^{j_l}(t)\|_{\tilde s} - 2^{-2l-1}  \leq \|u^{j_l}(t)\|_{s,j_{l+1}} \leq   \|u^{j_l}(t)\|_{\tilde s} + 2^{-2l-1}, \quad \text{or } \Big|\|u^{j_l}(t)\|_{s,j_{l+1}}- \|u^{j_l}(t)\|_{\tilde s}\Big|\leq 2^{-2l-1}.
   \end{align}
   Such bounds hold for any $l\geq 1$ and any $t\in[0,\tau_{j_l,j_{l+1}}^T]$.
   
   Next, to find $\tau$, we define a new sequence of stopping time
   \begin{align}\label{tilde_tau}
       \tilde{\tau}_l := \inf \left\lbrace t\geq 0: \|u^{j_l}(t)\|_{\tilde s} \geq 1 + 2^{-l+1} + \|u_0^{j_l}\|_{\tilde s} \right\rbrace \wedge \inf \left\lbrace t \geq 0: \, \|\partial_{zz}  u^{j_l}(t) - \partial_{zz}  u^{j_l}_0 \|_{L^\infty} \geq \frac\kappa4 \right\rbrace \wedge T,
   \end{align}
   and let
   \begin{align*}
          \Omega_N := \bigcap\limits_{n=N}^\infty \left\lbrace \sup\limits_{t\in[0, \tilde{\tau}_{n}\wedge \tilde{\tau}_{n+1}]} \|u^{j_n} - u^{j_{n+1}}\|_{s,j_{n+1}} < 2^{-(n+2)}   \right\rbrace.
   \end{align*}
   Then it is clear that $\tilde{\tau}_l \leq \tau_{j_l}^T$ and $\tilde{\tau}_n \wedge  \tilde{\tau}_{n+1} \leq \tau_{j_n,j_{n+1}}^T$ where $\tau_{j_l}^T$ and $\tau_{j_n,j_{n+1}}^T$ are defined in \eqref{stopping-time-taujT} and \eqref{taujk}. By Markov inequality and thanks to \eqref{equivalence-sj} we get
   \begin{align*}
       &\mathbb P\left(  \sup\limits_{t\in[0, \tilde{\tau}_{l}\wedge \tilde{\tau}_{l+1}]} \|u^{j_l}(t)-u^{j_{l+1}}(t)\|_{s,j_{l+1}} \geq 2^{-(l+2)}  \right) \leq 2^{l+2} \mathbb E \left(  \sup\limits_{t\in[0, \tilde{\tau}_{l}\wedge \tilde{\tau}_{l+1}]} \|u^{j_l}(t)-u^{j_{l+1}}(t)\|_{s,j_{l+1}}   \right)
       \\
       \leq &C_{\kappa}2^{l+2}\mathbb E \left(  \sup\limits_{t\in[0, \tau_{j_l,j_{l+1}}^T]} \|u^{j_l}(t)-u^{j_{l+1}}(t)\|_{s}   \right)\leq C_{\kappa}\frac{1}{M+4}2^{-(l-3)},
   \end{align*}
 and thus
   \[
   \sum_{l=1}^\infty \mathbb P\left(  \sup\limits_{t\in[0, \tilde{\tau}_{l}\wedge \tilde{\tau}_{l+1}]} \|u^{j_l}(t)-u^{j_{l+1}}(t)\|_{s,j_{l+1}} \geq 2^{-(l+2)}  \right) <\infty.
   \]
   By the Borel-Cantelli lemma, we infer that
   \begin{align*}
      \mathbb P  \left( \bigcap_{N=1}^\infty \bigcup_{n=N}^\infty \left\lbrace \sup\limits_{t\in[0, \tilde{\tau}_{n}\wedge \tilde{\tau}_{n+1}]} \|u^{j_n} - u^{j_{n+1}}\|_{s,j_{n+1}} \geq 2^{-(n+2)} \right\rbrace  \right)=0,
   \end{align*} 
   and therefore $\tilde\Omega = \cup_{N=1}^\infty \Omega_N \subseteq\Omega$ is a set of full measure. 
   
   In order to show that $\{\tilde{\tau}_l\}$ converges, we will establish
   \begin{align}\label{proof-cauchy-decrease}
     \tilde{\tau}_{l+1}(\omega)\leq \tilde{\tau}_l(\omega) \quad \text{for every } l\geq N, \, \omega\in\Omega_N.
   \end{align}
   Given $N$ and $l\geq N$ we consider the set $\lbrace \tilde{\tau}_{l+1} > \tilde{\tau}_{l}\rbrace\cap \Omega_N$. By continuity in time and the definition of $\tilde{\tau}_l$, we infer that
   \[
   \sup\limits_{t\in[0, \tilde{\tau}_{l}]} \|u^{j_l}\|_{\tilde s} = 1+2^{-l+1} +\|u^{j_l}_0\|_{\tilde s}.
   \]
   On $\Omega_N$, note that $\|u^{j_{l+1}}\|_{s,j_{l+1}} = \|u^{j_{l+1}}\|_{\tilde s}$, by the triangle inequality one has
   \[
    \sup\limits_{t\in[0, \tilde{\tau}_{l}\wedge \tilde{\tau}_{l+1}]} \|u^{j_l}\|_{s,j_{l+1}} -  \sup\limits_{t\in[0, \tilde{\tau}_{l}\wedge \tilde{\tau}_{l+1}]} \|u^{j_{l+1}}\|_{\tilde s} < 2^{-(l+2)},
   \]
   and
   \[
     \|u^{j_{l+1}}_0\|_{\tilde s}- \|u^{j_l}_0\|_{s,j_{l+1}}  < 2^{-(l+2)}.
   \]
   By the observations above, thanks to \eqref{lemma:cauchy-proof-3}, one has
   \begin{align}\label{lemma:cauchy-proof-4}
       \sup\limits_{t\in[0, \tilde{\tau}_{l}\wedge \tilde{\tau}_{l+1}]} \|u^{j_{l+1}}\|_{\tilde s} > & \sup\limits_{t\in[0, \tilde{\tau}_{l}\wedge \tilde{\tau}_{l+1}]} \|u^{j_l}\|_{s,j_{l+1}} - 2^{-(l+2)} \geq  \sup\limits_{t\in[0, \tilde{\tau}_{l}\wedge \tilde{\tau}_{l+1}]}  \|u^{j_l}\|_{\tilde s} - 2^{-2l-1}  - 2^{-(l+2)}
       \\
       = &\sup\limits_{t\in[0, \tilde{\tau}_{l}]}\|u^{j_l}(t)\|_{\tilde s} - 2^{-2l-1} - 2^{-(l+2)}
      = 1+2^{-l+1} +\|u^{j_l}_0\|_{\tilde s}- 2^{-2l-1} - 2^{-(l+2)} 
      \\
      \geq & 1+2^{-l+1} + \|u^{j_l}_0\|_{s,j_{l+1}} - 2\cdot 2^{-2l-1} - 2^{-(l+2)}
      \\
      >& 1+2^{-l+1} +  \|u^{j_{l+1}}_0\|_{\tilde s} - 2^{-2l} - 2\cdot 2^{-(l+2)}
      \\
      =& 1  + 2^{-(l+1)+1} + 2^{-l-1} - 2^{-2l} + \|u^{j_{l+1}}_0\|_{\tilde s} \geq 1  + 2^{-(l+1)+1} + \|u^{j_{l+1}}_0\|_{\tilde s}.
   \end{align}
   on $\lbrace \tilde{\tau}_{l+1} > \tilde{\tau}_{l}\rbrace\cap \Omega_N$. 
   On the other hand, on $\lbrace \tilde{\tau}_{l+1} > \tilde{\tau}_{l}\rbrace\cap \Omega_N$ we have
   \begin{align}\label{lemma:cauchy-proof-5}
       \sup\limits_{t\in[0, \tilde{\tau}_{l}\wedge \tilde{\tau}_{l+1}]} \|u^{j_{l+1}}\|_{\tilde s} \leq \sup\limits_{t\in[0,  \tilde{\tau}_{l+1}]} \|u^{j_{l+1}}\|_{\tilde s} \leq 1 + 2^{-(l+1)+1}  + \|u^{j_{l+1}}_0\|_{\tilde s}.
   \end{align}
   Therefore \eqref{lemma:cauchy-proof-4} and \eqref{lemma:cauchy-proof-5} yield that $\lbrace \tilde{\tau}_{l+1} > \tilde{\tau}_{l}\rbrace\cap \Omega_N$ is a empty set.  By the discussion above we know \eqref{proof-cauchy-decrease} is true, thus we can define $\tau(\omega)= \lim_{l\to \infty} \tilde{\tau}_l(\omega)$ for $\omega\in\Omega_N$. As $\tilde\Omega = \cup_{N=1}^\infty \Omega_N$ is a set of full measure, then for almost every $\omega \in \Omega$ there exists some $N$ such that $\omega\in\Omega_N$, and therefore we can define
   \begin{align*}
      \tau= \lim_{l\to \infty} \tilde{\tau}_l \quad a.s..
   \end{align*}
   
   Next, we show that $\tau>0$ a.s.. For each $\varepsilon>0$ with $T>\varepsilon>0$, we have
   \begin{align*}
       \lbrace \tilde{\tau}_l < \varepsilon \rbrace\subseteq \left\lbrace \sup\limits_{t\in[0, \tilde{\tau}_{l}\wedge \varepsilon]} \|u^{j_{l}}\|_{\tilde s} = \|u^{j_{l}}_0\|_{\tilde s} +1 + 2^{-l+1}\right\rbrace 
       \subseteq \left\lbrace \sup\limits_{t\in[0, \tilde{\tau}_{l}\wedge \varepsilon]} \|u^{j_{l}}\|_{\tilde s} > \|u^{j_{l}}_0\|_{\tilde s} +1 \right\rbrace. 
   \end{align*}
   Since
   \[
   \lbrace \tau < \varepsilon \rbrace = \bigcup_{l=1}^\infty \bigcap_{k=l}^\infty \lbrace \tilde\tau_k < \varepsilon\rbrace,
   \]
   then as $\tilde{\tau}_l \leq \tau_{j_l}^T$, we have
   \begin{align*}
       &\mathbb P \left( \tau < \varepsilon\right) = \mathbb P \left( \bigcup_{l=1}^\infty \bigcap_{k=l}^\infty \lbrace \tilde\tau_k < \varepsilon\rbrace\right) \leq \limsup\limits_{l\to\infty} \mathbb P \left( \tilde\tau_l < \varepsilon\right)
       \\
       \leq &\sup_{l\geq 1} \mathbb P \left(\left\lbrace \sup\limits_{t\in[0, \tilde{\tau}_{l}\wedge \varepsilon]} \|u^{j_{l}}\|_{\tilde s} > \|u^{j_{l}}_0\|_{\tilde s} +1 \right\rbrace\right) \leq \sup_{l\geq 1} \mathbb P \left(\left\lbrace \sup\limits_{t\in[0, \tau_{j_l}^T\wedge \varepsilon]} \|u^{j_{l}}\|_{\tilde s} > \|u^{j_{l}}_0\|_{\tilde s} +1 \right\rbrace\right).
   \end{align*}
   Then by condition \eqref{cauchy-condition-2}, we know 
   \begin{align*}
       \mathbb P (\tau=0) = \mathbb P (\cap_{\varepsilon>0} \lbrace \tau<\varepsilon\rbrace) = \lim_{\varepsilon\to 0} \mathbb P(\tau<\varepsilon)=0.
   \end{align*}
   By construction $\tau\leq T$, thus \eqref{cauchy-result-1} follows.
   
   Next, we establish \eqref{cauchy-result-2}. Recall that for each $\omega\in \tilde\Omega$, there exists $N=N(\omega)$ such that $\omega\in\Omega_N$ and $\tau(\omega)\leq \tilde\tau_{l+1}(\omega)\leq \tilde\tau_{l}(\omega)$ provided that $l\geq N(\omega).$ Therefore,
   \begin{align*}
       \sup\limits_{t\in[0, \tau(\omega)]} \|u^{j_l} - u^{j_{l+1}}\|_{s} \leq C_{\kappa}\sup\limits_{t\in[0, \tau(\omega)]} \|u^{j_l} - u^{j_{l+1}}\|_{s,j_{l+1}} \leq C_{\kappa}\sup\limits_{t\in[0, \tilde{\tau}_{l}\wedge\tilde{\tau}_{l+1} ]} \|u^{j_l} - u^{j_{l+1}}\|_{s,j_{l+1}} < C_{\kappa} 2^{-(l+2)}.
   \end{align*}
   This implies that $\{u^{j_l}(\cdot\wedge\tau)\}$ is a Cauchy sequence in $C([0,T],\mathcal D_s)$, thus there exists a predictable process $u(\cdot\wedge\tau)\in C([0,T],\mathcal D_s)$ such that \eqref{cauchy-result-2} holds.
   From \eqref{cauchy-result-2} one can infer that
   \[
   \sup_{t\in[0,\tau]} \|\p_{zz} u^{j_l} - \p_{zz} u\|_{L^\infty} \to 0 \quad a.s..
   \]
   As \eqref{rayleigh-cauchypart} holds for any $l$, this implies that for $t\in[0,\tau]$ we also have \eqref{rayleigh-cauchypart} holds for $u$. Thus $u(\cdot\wedge\tau)\in C([0,T],\mathcal D_{s,\kappa})$.
 
 Finally, to establish \eqref{cauchy-result-3}, for any $\omega\in\Omega_N$ and $l\geq N$, since $\tau(\omega)\leq \tilde\tau_{l}(\omega)$ we have
   \begin{align*}
       & \sup\limits_{t\in[0,\tau(\omega)]} \|u^{j_l}\|_{s}\leq C_{\kappa} \sup\limits_{t\in[0,\tau(\omega)]} \|u^{j_l}\|_{\tilde s}\leq C_{\kappa}(1+ 2^{-l+1} + \|u^{j_{l}}_0\|_{\tilde s})\leq C_{\kappa,s}(1+ \|u_0\|_{\tilde s}).
   \end{align*}
   Notice that the above bound is uniform in $\omega\in \tilde\Omega$. Now using \eqref{cauchy-result-2}, we obtain 
   \begin{align*}
       \sup\limits_{t\in[0,\tau]} \|u\|_{\tilde s} \leq C_{\kappa}\sup\limits_{t\in[0,\tau]} \|u\|_{s} \leq C_{\kappa,s}(1+ \|u_0\|_{\tilde s}) \quad a.s..
   \end{align*}
   This finishes the proof.
\end{proof}

\subsubsection{Verification of condition~\eqref{cauchy-condition-1}}
\quad

We first establish \eqref{cauchy-condition-1}. As $(u^j,\eta^j)$ and $(u^k,\eta^k)$ are pathwise solutions to the original system \eqref{PE-inviscid-system}, from \eqref{PE-inviscid-1} we have
\[
 du^j + (u^j \p_x u^j + w^j \p_z u^j_z + \p_x p^j) dt = \sigma(u^j) dW, 
\]
\[
 du^k + (u^k \p_x u^k + w^k \p_z u^k_z + \p_x p^k) dt = \sigma(u^k) dW.
\]
Denote by $U=u^k-u^j$, $W=w^k-w^j$ and $P=p^k-p^j$ with $U_0 = u_0^k - u_0^j$, then
\begin{equation}\label{eqn:U}
    \begin{split}
        dU + (U \p_x u^k + u^j \p_x U + W \p_z u^k + w^j \p_z U + \p_x P)dt = (\sigma(u^k)-\sigma(u^j)) dW.
    \end{split}
\end{equation}
Denote by $v^j=\p_z u^j$, $v^k=\p_z u^k$, and $V=\p_z U = v^k-v^j$, one has
\begin{equation}\label{eqn:V}
    \begin{split}
        dV + (U \p_x v^k + u^j \p_x V + W \p_z v^k + w^j \p_z V)dt = \p_z(\sigma(u^k)-\sigma(u^j)) dW.
    \end{split}
\end{equation}
Notice that here we are not able to estimate $\|U\|_{\tilde s}$ as $U$ may not satisfy the local Rayleigh condition. Similar to \eqref{norm-sj} we define $\|\cdot\|_{s,j}$ by 
\begin{align}
    \|U\|_{s,j}^2 := \|U\|^2+\sum\limits_{\substack{|\alpha|\leq s,\\ D^\alpha \neq \p_x^{s}}} \|D^\alpha V\|^2 + \lnorm \frac{\p_x^{s} V}{\sqrt{\p_z v^j}} \rnorm^2.
\end{align}
Recall that $\tau^T_{j,k}\leq \eta^j\wedge\eta^k$, for each $k,j\geq J$, so for  $t\in[0,\tau^T_{j,k}]$ we have $\|u^j(t)\|_{s}<\frac\rho2$ and $\|u^k(t)\|_{s}<\frac\rho2$. 
 Thanks to \eqref{uj-regularity}, $\|U\|_{s,j}\sim_{\kappa} \|U\|_s$ is equivalent up to some constant depending on $\kappa$ but independent of $j$. The independence of $j$ in the equivalence is crucial in the analysis below.

\noindent\underline{{\bf Estimates of $\|U\|$ and $\|D^\alpha V\|$ with $D^\alpha \neq \p_x^s$:}} By It\^o's formula, we have
\begin{equation}\label{I123}
    \begin{split}
        &d(\|U\|^2 + \sum\limits_{|\alpha|\leq s, D^\alpha\neq \p_x^s} \|D^\alpha V\|^2) 
        \\
        =& -2\Big(\left\langle U \p_x u^k + u^j \p_x U + W \p_z u^k + w^j \p_z U + \p_x P, U  \right\rangle 
        \\
        &+ \sum\limits_{|\alpha|\leq s, D^\alpha\neq \p_x^s} \left\langle D^\alpha (U \p_x v^k + u^j \p_x V + W \p_z v^k + w^j \p_z V) , D^\alpha V  \right\rangle \Big) dt
        \\
        &+ \Big(\|\sigma(u^k) -\sigma(u^j)\|_{L^2(\mathscr U, L^2)}^2 + \sum\limits_{|\alpha|\leq s, D^\alpha\neq \p_x^s} \|D^\alpha\p_z(\sigma(u^k) -\sigma(u^j))\|_{L^2(\mathscr U, L^2)}^2\Big) dt
        \\
        &+ 2\Big(\left\langle  \sigma(u^k) -\sigma(u^j), U  \right\rangle + \sum\limits_{|\alpha|\leq s, D^\alpha\neq \p_x^s} \left\langle D^\alpha\p_z (\sigma(u^k) -\sigma(u^j)), D^\alpha V  \right\rangle \Big) dW 
        \\
        =: & I_1 dt + I_2 dt + I_3 dW.
    \end{split}
\end{equation}
The estimates can be performed similarly as in previous sections. Notice that we are doing estimates for $t\in[0,\tau_{j,k}^T]$.
For the nonlinear terms in $I_1$, one can get
\begin{align*}
    |I_1| \leq C_{\rho,\kappa} \|U\|_{s,j}^2 + C_{\rho,\kappa}\|u^k\|_{s+1}^2\|U\|_{s-1}^2,
\end{align*}
where the second term on the right-hand side is troublesome and will be treated at the end. For terms in $I_2$, using Assumption~\eqref{noise-ine}, we have
\begin{align*}
    |I_2| \leq C_{\kappa}\|U\|_{s,j}^2.
\end{align*}
The $I_3dW$ term is treated by the BDG inequality and using \eqref{noise-ine},
\begin{align*}
    \mathbb E\left( \sup\limits_{t\in [0,\tau_{j,k}^T]} \left|\int_0^t I_3 dW \right| \right) \leq \frac14 \mathbb E\sup\limits_{t\in [0,\tau_{j,k}^T]} \|U\|_{s,j}^2 + C_{\kappa}\mathbb E\int_0^{\tau_{j,k}^T} \|U\|_{s,j}^2 dt.
\end{align*}

\noindent\underline{{\bf Estimate of $\lnorm\frac{\p_x^{s} V}{\sqrt{\p_z v^j}}\rnorm$:}} We first compute
\begin{equation*}
\begin{split}
    d \partial_x^{s} V &= -\Big[\p_x^{s}(U \p_x v^k + u^j \p_x V + W \p_z v^k + w^j \p_z V   )\Big] dt +\Big[\p_x^{s}\p_z(\sigma (u^k) -\sigma (u^j))\Big] dW
    \\
    &= -\Big[\p_x^{s}(U \p_x v^k + u^j \p_x V + W \p_z v^j + w^k \p_z V   )\Big] dt +\Big[\p_x^{s}\p_z(\sigma (u^k) -\sigma (u^j))\Big] dW
    =:A_1 dt + A_2dW,
\end{split}
\end{equation*}
and
\begin{equation*}
\begin{split}
      &d \p_z v^j = -(\p_z u^j \p_x v^j + u^j \p_{xz} v^j + \p_z w^j \p_z v^j + w^j\p_{zz} v^j)dt +\partial_{zz} \sigma (u^j) dW=: B_1 dt + B_2 dW.
\end{split}
\end{equation*}
By It\^o's formula, we have
\begin{equation}\label{I456}
    \begin{split}
        d\lnorm\frac{\p_x^{s} V}{\sqrt{\p_z v^j}}\rnorm^2 = &\left(\left\langle 2A_1, \frac{\p_x^{s} V}{\p_z v^j} \right\rangle - \left\langle B_1,\frac{|\p_x^{s} V|^2}{|\p_z v^j|^2} \right\rangle\right)dt
        \\
        &+\frac12\left(\left\langle 2A_2^2,\frac1{\p_z v^j} \right\rangle - \left\langle 4\frac{\p_x^{s} V}{|\p_z v^j|^2} ,A_2 B_2 \right\rangle +  \left\langle 2\frac{|\p_x^{s} V|^2}{|\p_z v^j|^3} ,B_2^2 \right\rangle \right) dt
        \\
        &+ \left(\left\langle 2\frac{\p_x^{s} V}{\p_z v^j}, A_2 \right\rangle - \left\langle \frac{|\p_x^{s} V|^2}{|\p_z v^j|^2}, B_2 \right\rangle  \right)dW =:I_4dt + I_5 dt + I_6 dW.
    \end{split}
\end{equation}
For terms in $I_4$, a direct estimate will give a bound including $\|u^j\|_{s+1}^2\|U\|_{s-1}^2$, for which we are not able to control at the end.
In order to get rid of $\|u^j\|_{s+1}^2\|U\|_{s-1}^2$, we shall discuss the highest order terms in $I_4$ as they are crucial in getting the correct bound. First, by integration by parts and thanks to the property of stopping time $\tau_{j,k}^T$, for $t\in[0,\tau_{j,k}^T]$ one has
\[
\left|\langle \p_x^s(U\p_x v^k + u^j \p_x V + w^k \p_z V), \frac{\p_x^{s} V}{\p_z v^j} \rangle \right|\leq C_{\rho,\kappa} \|U\|_{s,j}^2 + C_{\rho,\kappa}\|u^k\|_{s+1}^2 \|U\|_{s-1}^2.
\]
Next, the worst term vanishes due to
\begin{align*}
    \int \p_x^s W \p_z v^{j}  \frac{\p_x^{s} V}{\p_z v^j} dxdz = \int \p_x^s W \p_x^{s} V dxdz = 0 \text{ by integration by parts.} 
\end{align*}
On the other hand, the term involving the highest order on $v^j$ can be handled as:
\begin{align*}
    &\left|\int  W \p_x^s \p_z v^{j} \frac{\p_x^{s} V}{\p_z v^j} dxdz\right| = \left|\int  W \p_x^s \p_z (v^k-V) \frac{\p_x^{s} V}{\p_z v^j} dxdz\right|
    \\
    \leq &\left|\int  W \p_x^s \p_z v^k \frac{\p_x^{s} V}{\p_z v^j} dxdz\right| + \left|\int  W \p_x^s \p_z V \frac{\p_x^{s} V}{\p_z v^j} dxdz\right|
    \\
    \leq &  C_{\rho,\kappa} \|U\|_{s,j}^2 + C_{\rho,\kappa}\|u^k\|_{s+1}^2 \|U\|_{s-1}^2,
\end{align*}
where for the second term we have used integration by parts. The lower-order terms can be handled readily. By doing these we successfully get rid of $\|u^j\|_{s+1}^2\|U\|_{s-1}^2$, and deduce
\begin{align*}
    |I_4|\leq C_{\rho,\kappa} \|U\|_{s,j}^2 + C_{\rho,\kappa}\|u^k\|_{s+1}^2 \|U\|_{s-1}^2.
\end{align*}
Again the second term on the right-hand side is troublesome and will be treated at the end. For $I_5$, by Assumption~\eqref{noise-ine}, one has
\begin{align*}
    |I_5|\leq C_{\kappa}\|U\|_{s,j}^2.
\end{align*}
Then the BDG inequality and \eqref{noise-ine} give that
\begin{align*}
    \mathbb E\Big( \sup\limits_{t\in [0,\tau_{j,k}^T]} \left|\int_0^t I_6 dW \right| \Big) \leq \frac14 \mathbb E\sup\limits_{t\in [0,\tau_{j,k}^T]} \|U\|_{s,j}^2 + C_{\rho,\kappa}\mathbb E\int_0^{\tau_{j,k}^T} \|U\|_{s,j}^2 dt.
\end{align*}

\noindent\underline{{\bf Combining the estimates:}} The estimates of $I_1$ to $I_6$ together brings
\begin{align}\label{step5-combine}
    \mathbb E\left( \sup\limits_{t\in [0,\tau_{j,k}^T]} \|U\|_{s,j}^2\right) \leq 2\mathbb E \|U_0\|_{s,k}^2 +& C_{\rho,\kappa}\mathbb E\int_0^{\tau_{j,k}^T} \|U\|_{s,j}^2 dt \nonumber\\
    &
    + C_{\rho,\kappa}\mathbb E\int_0^{\tau_{j,k}^T}  \|u^k\|_{s+1}^2\|U\|_{s-1}^2 dt.
\end{align}
By the Gr\"onwall lemma,
\begin{equation}\label{trouble-term-1}
    \begin{split}
       \mathbb E\left( \sup\limits_{t\in [0,\tau_{j,k}^T]}\|u^k-u^j\|_{s}^2\right) &\leq C_{\kappa}\mathbb E\left( \sup\limits_{t\in [0,\tau_{j,k}^T]}\|u^k-u^j\|_{s,j}^2\right)
    \\&= C_{\kappa}\mathbb E\left( \sup\limits_{t\in [0,\tau_{j,k}^T]} \|U\|_{s,j}^2\right)
    \\
    &\leq C_{\rho,\kappa,T} \mathbb E\left( \|U_0\|_{s,j}^2 + \sup\limits_{t\in [0,\tau_{j,k}^T]} \|u^k\|_{s+1}^2\|U\|_{s-1}^2 \right)
    \\
    &\leq C_{\rho,\kappa,T} \mathbb E\left(\|u^k_0-u^j_0\|_{s}^2 + \sup\limits_{t\in [0,\tau_{j,k}^T]} \|u^k\|_{s+1}^2 \|U\|_{s-1}^2 \right). 
    \end{split}
\end{equation}
Thanks to \eqref{eq:PnconvHs} and since $u_0 \in \mathcal D_s$, we have $\sup_{j\geq k} \|u^k_0-u^j_0\|_{s}^2 \to 0$ as $k\to \infty$. Then by the dominated convergence theorem, we have 
\[
 \sup_{j\geq k} \mathbb E \|u^k_0-u^j_0\|_{s}^2 \leq \mathbb E  \sup_{j\geq k}  \|u^k_0-u^j_0\|_{s}^2 \to 0 \quad \text{as } k\to \infty.
\]
Therefore, \eqref{cauchy-condition-1} is proved once we can establish 
\begin{equation}\label{cauchy-condition-1-additional}
    \lim\limits_{k\to \infty} \sup_{j\geq k} \mathbb E \sup\limits_{t\in [0,\tau_{j,k}^T]} \|u^k\|_{s+1}^2\|U\|_{s-1}^2=0.
\end{equation}

We now prove \eqref{cauchy-condition-1-additional}, where we first compute $d(\|u^k\|_{\widetilde{s+1}}^2 \|U\|_{s-1,j}^2)$. Using \eqref{I123} and \eqref{I456} gives
\begin{equation}\label{I16}
    d\|U\|_{s-1,j}^2 = (I'_1+I'_2+I'_4+I'_5) dt + (I'_3+I'_6)dW,
\end{equation}
where $I_i'$ is obtained by replacing the index $s$ with $s-1$ in $I_i$ appearing in \eqref{I123} and \eqref{I456}.
To find $d\|u^k\|_{\widetilde{s+1}}^2$, similar as \eqref{I123} one can get
\begin{equation}
\begin{split}
    &d(\|u^k\|^2 + \sum\limits_{|\alpha|\leq s+1, D^\alpha\neq \p_x^{s+1}} \|D^\alpha v^k\|^2) 
    \\
    =& -2\Big(\langle u^k\p_x u^k + w^k \p_z u^k + \p_x p^k, u^k\rangle + \sum\limits_{|\alpha|\leq s+1, D^\alpha\neq \p_x^{s+1}} \langle D^\alpha(u^k\p_x v^k + w^k \p_z v^k), D^\alpha v^k \rangle \Big)dt
    \\
    &+ \Big(\|\sigma(u^k)\|_{L^2(\mathscr U, L^2)}^2 + \sum\limits_{|\alpha|\leq s+1, D^\alpha\neq \p_x^{s+1}} \|D^\alpha \p_z \sigma(u^k)\|_{L^2(\mathscr U, L^2)}^2\Big)dt
    \\
    &+ 2\Big( \langle \sigma(u^k), u^k \rangle + \sum\limits_{|\alpha|\leq s+1, D^\alpha\neq \p_x^{s+1}} \langle D^\alpha\p_z\sigma(u^k), D^\alpha v^k \rangle \Big)dW =: J_1dt +J_2 dt + J_3 dW.
\end{split}
\end{equation}
Note that
\begin{equation*}
    d \p_x^{s+1} v^k = -\left[\p_x^{s+1}(u^k \p_x v^k + w^k \p_z v^k) \right]dt + \left[\p_x^{s+1}\p_z\sigma(u^k) \right]dW =: A_1dt +A_2 dW.
\end{equation*}
\begin{equation*}
\begin{split}
      &d \p_z v^k = -(\p_z u^k \p_x v^k + u^k \p_{xz} v^k + \p_z w^k \p_z v^k + w^k\p_{zz} v^k)dt +\partial_{zz} \sigma (u^k) dW=: B_1 dt + B_2 dW.
\end{split}
\end{equation*}
By It\^o's formula, similar to \eqref{I456}, we get
\begin{equation}
    \begin{split}
          d\lnorm\frac{\p_x^{s+1} v^k}{\sqrt{\p_z v^k}}\rnorm^2 = &\left(\left\langle 2A_1, \frac{\p_x^{s+1} v^k}{\p_z v^k} \right\rangle - \left\langle B_1,\frac{|\p_x^{s+1} v^k|^2}{|\p_z v^k|^2} \right\rangle\right)dt
        \\
        &+\frac12\left(\left\langle 2A_2^2,\frac1{\p_z v^k} \right\rangle - \left\langle 4\frac{\p_x^{s+1} v^k}{|\p_z v^k|^2} ,A_2 B_2 \right\rangle +  \left\langle 2\frac{|\p_x^{s+1} v^k|^2}{|\p_z v^k|^3} ,B_2^2 \right\rangle \right) dt
        \\
        &+ \left(\left\langle 2\frac{\p_x^{s+1} v^k}{\p_z v^k}, A_2 \right\rangle - \left\langle \frac{|\p_x^{s+1} v^k|^2}{|\p_z v^k|^2}, B_2 \right\rangle  \right)dW =:J_4dt + J_5 dt + J_6 dW.
    \end{split}
\end{equation}
Therefore,  
\begin{equation}\label{j16}
    d\|u^k\|_{\widetilde{s+1}}^2 = (J_1+J_2+J_4+J_5)dt + (J_3+J_6) dW.
\end{equation}
By an application of the It\^o product rule, we obtain
\begin{equation}\label{ito-product}
    \begin{split}
        &d(\|u^k\|_{\widetilde{s+1}}^2 \|U\|_{s-1,j}^2) 
        \\
        =&\|u^k\|_{\widetilde{s+1}}^2 d\|U\|_{s-1,j}^2 + \|U\|_{s-1,j}^2 d\|u^k\|_{\widetilde{s+1}}^2 + d\|u^k\|_{\widetilde{s+1}}^2 d\|U\|_{s-1,j}^2
        \\
        =& \Big( \|u^k\|_{\widetilde{s+1}}^2(I'_1+I'_2+I'_4+I'_5) +  \|U\|_{s-1,j}^2(J_1+J_2+J_4+J_5) + (I'_3+I'_6)(J_3+J_6)\Big) dt
        \\
        &+ \Big( \|u^k\|_{\widetilde{s+1}}^2 (I'_3+I'_6) + \|U\|_{s-1,j}^2 (J_3+J_6) \Big)dW,
    \end{split}
\end{equation}
where $(I'_3+I'_6)(J_3+J_6)$ is the term arising from $(I'_3+I'_6)dW (J_3+J_6) dW $. The estimates for \eqref{ito-product} follow similarly as in previous steps and in previous sections, thus we mainly list out the final results.

Similar to the estimates for $I_i$, we obtain
\[
|I'_1+I'_2+I'_4+I'_5|\leq C_{\rho,\kappa} \|U\|_{s-1,j}^2 +  C_{\rho,\kappa}\|u^k\|_{s}^2 \|U\|_{s-2}^2 \leq C_{\rho,\kappa} \|U\|_{s-1,j}^2.
\]
Therefore,
\[
\|u^k\|_{\widetilde{s+1}}^2(I'_1+I'_2+I'_4+I'_5) \leq C_{\rho,\kappa} \|U\|_{s-1,j}^2 \|u^k\|_{\widetilde{s+1}}^2.
\]
Following a similar derivation as in Proposition \ref{proposition:estimate-inviscid} gives
\[
\|U\|_{s-1,j}^2(J_1+J_2+J_4+J_5) \leq C_{\rho,\kappa} \|U\|_{s-1,j}^2 \|u^k\|_{\widetilde{s+1}}^2.
\]
Thanks to Assumption~\eqref{noise-ine}, for the last drift term we have
\[
(I'_3+I'_6)(J_3+J_6) \leq C_{\rho,\kappa} \|U\|_{s-1,j}^2 (1+\|u^k\|_{\widetilde{s+1}}^2).
\]
Next, for the stochastic terms, the BDG inequality implies
\begin{align*}
   &\mathbb E \left( \sup\limits_{t\in[0,\tau_{j,k}^T]} \left|\int_0^t \Big(\|u^k\|_{\widetilde{s+1}}^2 (I'_3+I'_6) + \|U\|_{s-1,j}^2 (J_3+J_6)\Big)dW \right| \right) 
   \\
   \leq &\frac12 \mathbb E \sup\limits_{t\in[0,\tau_{j,k}^T]} (\|u^k\|_{\widetilde{s+1}}^2 \|U\|_{s-1,j}^2) + C_{\rho,\kappa}\mathbb E \int_0^{\tau_{j,k}^T}  \|U\|_{s-1,j}^2 (1+\|u^k\|_{\widetilde{s+1}}^2) dt.
\end{align*}
Summarizing the above estimates brings
\begin{align*}
     \mathbb E\left( \sup\limits_{t\in [0,\tau_{j,k}^T]} \|U\|_{s-1,j}^2\|u^k\|_{\widetilde{s+1}}^2\right) \leq &2\mathbb E \left(\|U_0\|_{s-1,j}^2\|u^k_0\|_{\widetilde{s+1}}^2\right)
     \\
     &+ C_{\rho,\kappa}\mathbb E\int_0^{\tau_{j,k}^T} \left(\|U\|_{s-1,j}^2 \|u^k_0\|_{\widetilde{s+1}}^2 + \|U\|_{s-1,j}^2\right)  dt.
\end{align*}
Then the Gr\"onwall inequality yields 
\begin{align*}
    \mathbb E\left( \sup\limits_{t\in [0,\tau_{j,k}^T]} \|U\|_{s-1,j}^2\|u^k\|_{\widetilde{s+1}}^2\right) \leq &C_{\rho,\kappa,T} \mathbb E \left(\|u^k_0-u^j_0\|_{s-1,j}^2\|u^k_0\|_{\widetilde{s+1}}^2\right)
    \\
    &+ C_{\rho,\kappa,T} \mathbb E\left(\sup\limits_{t\in [0,\tau_{j,k}^T]}\|u^k(t)-u^j(t)\|_{s-1,j}^2\right) .
\end{align*}
Regarding the first term in the above inequality, thanks to the equivalence of norms  \eqref{ctildekappa} and \eqref{equivalence-sj}, using \eqref{eq:PnHs1} and \eqref{eq:PnconvHsminus1} brings,
\begin{align*}
 &\sup_{j\geq k}\|u^k_0-u^j_0\|_{s-1,j}^2\|u^k_0\|_{\widetilde{s+1}}^2 \leq C_{\kappa} \sup_{j\geq k} \|u^k_0-u^j_0\|_{s-1}^2\|u^k_0\|_{s+1}^2
 \\
  \leq &C_{\kappa,s} \sup_{j\geq k}\|u^k_0-u^j_0\|_{s-1}^2 k^2 \|u_0\|_{s}^2 
 \\
 \leq &  C_{\kappa,s} \|u_0\|_{s}^2  \left(k^2\|u_0 - u^k_0\|_{s-1}^2 + \sup_{j\geq k} j^2\|u_0 - u^j_0\|_{s-1}^2 \right) \to 0 \quad \text{as } k\to \infty.
\end{align*}
By the dominant convergence theorem, 
\begin{align}\label{term1}
    \lim_{k\to \infty} \sup_{j\geq k} \mathbb E \left(\|u^k_0-u^j_0\|_{s-1,j}^2\|u^k_0\|_{\widetilde{s+1}}^2\right)\leq\lim_{k\to \infty} \mathbb E \sup_{j\geq k}  \left(\|u^k_0-u^j_0\|_{s-1,j}^2\|u^k_0\|_{\widetilde{s+1}}^2\right) =0.
\end{align}
Concerning the second term, one can repeat the estimate for $\|U\|_{s,j}^2$ and get \eqref{step5-combine} with the index $s$ replaced by $s-1$:
\begin{align*}
    \mathbb E\left( \sup\limits_{t\in [0,\tau_{j,k}^T]} \|U\|_{s-1,j}^2\right) \leq &2\mathbb E \|U_0\|_{s-1,j}^2 + C_{\rho,\kappa}\mathbb E\int_0^{\tau_{j,k}^T} \|U\|_{s-1,j}^2 dt
    + C\mathbb E\int_0^{\tau_{j,k}^T}  (\|u^k\|_{s}^2 + \|u^j\|_{s}^2)\|U\|_{s-2}^2 dt.
    \\
    \leq &2\mathbb E \|U_0\|_{s-1,j}^2 + C_{\rho,\kappa}\mathbb E\int_0^{\tau_{j,k}^T} \|U\|_{s-1,j}^2 dt,
\end{align*}
where the trouble term originally appears in \eqref{trouble-term-1} disappears since we can control $\|u^k\|_{s}^2$ under stopping time $\tau_{j,k}^T.$ Then by the Gr\"onwall inequality and follow similarly as in the estimate of \eqref{trouble-term-1} one can eventually get
\begin{align}\label{term2}
    \lim_{k\to \infty} \sup_{j\geq k} \mathbb E \left(\sup\limits_{t\in [0,\tau_{j,k}^T]}\|u^k(t)-u^j(t)\|_{s-1,j}^2\right)=0.
\end{align}
Combining \eqref{term1} and \eqref{term2}, we now obtain
\begin{align}
   \lim_{k\to \infty} \sup_{j\geq k} \mathbb E  \left(\sup\limits_{t\in [0,\tau_{j,k}^T]} \|u^k\|_{s+1}^2 \|U\|_{s-1}^2 \right) \leq C_{\kappa} \lim_{k\to \infty} \sup_{j\geq k} \mathbb E  \left(\sup\limits_{t\in [0,\tau_{j,k}^T]} \|u^k\|_{\widetilde{s+1}}^2 \|U\|_{s-1,j}^2 \right) = 0,
\end{align}
which gives \eqref{cauchy-condition-1-additional}, and this completes the proof of \eqref{cauchy-condition-1}.

\subsubsection{Verification of condition~\eqref{cauchy-condition-2}}
First recall that since $\tau^T_{j}\leq \eta^j$, for each $j\geq J$ and $t\in[0,\tau^T_{j}]$ we have $\|u^j(t)\|_{s}<\frac\rho2$ and $u^j(t)\in \mathcal D_{s+1,\kappa}$ a.s.. 

Following the estimates in Proposition \ref{proposition:estimate-inviscid}, when $p=2$ we obtain
\begin{align*}
   \sup\limits_{t\in[0,\tau^T_j \wedge S]} \|u^j\|_{\tilde s}^2 
    \leq  \|u^j_0\|_{\tilde s}^2 + C_{\rho,\kappa} \int_0^{\tau^T_j \wedge S}(1+ \|u^j\|_{\tilde s}^2) dt + C_{\rho,\kappa}\sup\limits_{t\in[0,\tau^T_j \wedge S]} \left| \int_0^t A dW \right|,
\end{align*}
where
\begin{align*}
    A:= \left\langle  \sigma(u^j), u^j \right\rangle  + \sum_{\substack{0\leq |\alpha|\leq s \\ D^\alpha\neq \p_x^s}} \left\langle D^\alpha \p_z\sigma(u^j), D^\alpha v^j \right\rangle 
     +  \left\langle \frac {\p_x^s v^j}{\p_z v^j} , \p_x^s\p_z \sigma(u^j) \right\rangle + \frac12 \left\langle -\left(\frac{\p_x^s v}{\p_z v^j}\right)^2 , \p_{zz} \sigma(u^j) \right\rangle .
\end{align*}
Therefore,
\begin{align*}
    &\mathbb P \left( \sup_{t \in\left[0, \tau^T_j \wedge S\right]}\left\|u^j(t)\right\|^2_{\tilde s}>\left\|u^j_0\right\|^2_{\tilde s}+1 \right) =  \mathbb P \left( \sup_{t \in\left[0, \tau^T_j \wedge S\right]}\left\|u^j(t)\right\|^2_{\tilde s}- \left\|u^j_0\right\|^2_{\tilde s}>1 \right)
    \\
    \leq &\mathbb P \left( C_{\rho,\kappa} \int_0^{\tau^T_j \wedge S} (1+\|u^j\|_{\tilde s}^2) dt + C_{\rho,\kappa}\sup\limits_{t\in[0,\tau^T_j \wedge S]} \left| \int_0^t A dW \right| > 1 \right).
\end{align*}
By Markov inequality and BDG inequality, we have
\begin{align*}
   &\mathbb P \left( C_{\rho,\kappa} \int_0^{\tau^T_j \wedge S} (1+\|u^j\|_{\tilde s}^2) dt + C_{\rho,\kappa}\sup\limits_{t\in[0,\tau^T_j \wedge S]} \left| \int_0^t A dW \right| > 1 \right)
   \\
   \leq &C_{\rho,\kappa}\mathbb E \int_0^{\tau^T_j \wedge S} C_{\rho,\kappa} (1+\|u^j\|_{\tilde s}^2) dt + C_{\rho,\kappa}\mathbb E \sup\limits_{t\in[0,\tau^T_j \wedge S]} \left| \int_0^t A dW \right|
   \\
   \leq &C_{\rho,\kappa} S + C_{\rho,\kappa}\mathbb E \left(\int_0^{\tau^T_j \wedge S} (1+\|u^j\|_{\tilde s}^4) dt \right)^{\frac12} \leq C_{\rho,\kappa} S + C_{\rho,\kappa} S^{\frac12}\to 0 \quad \text{ as } S \to 0.
\end{align*}
Therefore the condition \eqref{cauchy-condition-2} is valid.

\subsection{Step 3: Local pathwise solution with \texorpdfstring{$L^2(\Omega)$}{Lg} initial data by localization}\label{sec:proof-localization}

First, using the estimate \eqref{cauchy-result-3} in Lemma~\ref{lemma:cauchy}, we know that 
\[
\mathbb E \sup\limits_{t\in[0,T]} \|u(\cdot\wedge\tau)\|_{\tilde s}^2  = \mathbb E \sup\limits_{t\in[0,\tau]} \|u\|_{\tilde s}^2 \leq C_\kappa \mathbb E(1+ \|u_0\|^2_{\tilde s}) \leq C_{\kappa}(1+\frac M2) < \infty,
\]
thus $u(\cdot\wedge\tau)\in L^2(\Omega; C([0,T],\mathcal D_{s,\kappa}))$. Till now, we obtain an unique pathwise solution $u(\cdot\wedge\tau)\in L^2(\Omega; C([0,T],\mathcal D_{s,\kappa}))$ provided that \eqref{ic-requirement} holds. Next, we extend this result to the case when $u_0\in L^2(\Omega; \mathcal D_{s,2\kappa})$.

For each $k\in \mathbb N$, denote by 
\[
\Omega_k:=\left\lbrace k-1 \leq \Vert u_0 \Vert_{\tilde s} < k \right\rbrace \subseteq \Omega, \qquad u_{0,k} = \mathds{1}_{\Omega_k} u_0,
\]
then one has $\|u_{0,k}\|_{\tilde s} < k$ \,a.s.. For each $k\in \mathbb N$, consider the modified system \eqref{PE-modified-system} with $\rho = (1+\tilde{C_\kappa})(\frac{2kC_s}{\tilde{c_\kappa}} + 4)(1+\frac1{\tilde{c_k}})$, where the choice of $\rho$ is inspired by \eqref{cauchy-rho}.
Then by Section~\ref{sec:step1} and \ref{sec:step2} there exists an unique local pathwise solutions $(u_k,\tau_k)$ to the original system \eqref{PE-inviscid-system} with initial data $u_{0,k}$. Define
\[
u = \sum\limits_{k=1}^\infty u_k \mathds{1}_{\Omega_k}, \quad \tau = \sum\limits_{k=1}^\infty \tau_k \mathds{1}_{\Omega_k}.
\]
As $0<\tau_k\leq T$ a.s., we know that $0<\tau\leq T$ a.s.. Moreover, since the filtration $\mathbb F$ is right-continuous, $\tau$ is a stopping time (see \cite[page 6--7]{karatzas1991brownian}).
Using \eqref{cauchy-result-3} again, we
know that $\sup\limits_{t\in[0,\tau_k]}\|u_k(t)\|_{\tilde s} \leq C_{\kappa}(1 + \|u_{0,k}\|_{\tilde s})$ a.s.. Then one can compute
\begin{align}\label{L2-bdd}
    \mathbb E \sup\limits_{t\in[0,T]} \|u(\cdot\wedge \tau)\|_{\tilde s}^2 = \mathbb E \sup\limits_{t\in[0,\tau]} \|u\|_{\tilde s}^2 = & \mathbb E \sum\limits_{k=1}^\infty \mathds{1}_{\Omega_k} \sup\limits_{t\in[0,\tau_k]} \|u_k\|_{\tilde s}^2 \nonumber
    \\
    \leq &C_{\kappa}\mathbb E \sum\limits_{k=1}^\infty \mathds{1}_{\Omega_k} (1+ \|u_{0,k}\|_{\tilde s})^2\leq C_{\kappa}(1+ \mathbb E \|u_0\|_{\tilde s}^2) < \infty.
\end{align}
The fact that $u_k(\cdot\wedge \tau_k) \in L^2(\Omega; C([0,T],\mathcal D_{s,\kappa}))$ together with \eqref{L2-bdd} imply that $u(\cdot\wedge \tau)\in L^2(\Omega; C([0,T], \mathcal D_{s,\kappa}))$, and thus $(u,\tau)$ is a local pathwise solution to the original system \eqref{PE-inviscid-system} corresponding to the initial data $u_0\in L^2(\Omega; \mathcal D_{s,2\kappa})$.

Finally, the extension to the maximal pathwise solution $(u, \{\eta_n\}_{n\in\mathbb N},\xi)$ follows the standard process, see, for example, \cite{glatt2009strong,glatt2014local,crisan2019solution}. Note that the condition $\|\partial_{zz} u - \partial_{zz} u_0\|_{L^\infty} = \frac{\kappa}4$  on the set $\{\xi<\infty\}$ is due to \eqref{tilde_tau} and the definition of $\tau.$ This concludes the proof of Theorem \ref{thm:main-1}.

\section{Concluding Remarks}\label{sec:rmk}
We establish the local in time existence and uniqueness of maximal pathwise solutions in Sobolev spaces to the 2D stochastic hydrostatic Euler equations with multiplicative noise. In the deterministic setting, this model is known to be ill-posed in Sobolev spaces; we address this issue by imposing a local Rayleigh condition on the initial data.

Unlike many other SPDEs (e.g., \cite{debussche2011local,brzezniak2021well}), the Galerkin system is not suitable as an approximation scheme here since the key cancelation \eqref{eqn:cancellation} is not valid for the Galerkin system (see Remark \ref{rmk:cancellation} for more details). We overcome this difficulty by considering the horizontally viscous PEs as the approximation scheme. In order to obtain a pathwise solution that is continuous in time with desired regularity, we first smooth the initial data by projecting it onto the spaces with finite Fourier modes, to obtain a sequence of smooth solutions. Then we develop an abstract Cauchy theorem to prove that this sequence will converge to a solution with desired regularity with an a.s.\ positive stopping time. Our Cauchy theorem is analogous to the one that has been established in some previous works \cite{glatt2009strong,glatt2014local}. However, our functional spaces are more complicated due to the involvement of the local Rayleigh condition, and we need to perform more delicate analysis to prove our abstract Cauchy theorem.

This work gives the first result concerning the existence and uniqueness of solutions to the stochastic hydrostatic Euler equations in Sobolev spaces, and the first result
on the existence of pathwise solutions to this model. 

\section*{Acknowledgement}
R.H. was partially supported by the Simons Foundation (MP-TSM-00002783), the ONR grant under \#N00014-24-1-2432, and the 
NSF grant DMS-2420988. Q.L. was partially supported by the Simons Foundation (SFI-MPS-TSM-00013384).

\section*{Data Availability}
Data sharing not applicable to this article as no datasets were generated or analyzed during the current study.

\bibliographystyle{plain}
\bibliography{Reference}

\end{document}